\newtheorem{theorem}{Theorem}[section]
\newtheorem{corollary}[theorem]{Corollary}
\newtheorem{lemma}[theorem]{Lemma}
\newtheorem{proposition}[theorem]{Proposition}
\newtheorem{example}[theorem]{Example}
\newtheorem{definition}[theorem]{Definition}
\newtheorem{remark}[theorem]{Remark}
\numberwithin{equation}{section}
\newcommand{\op}{\operatorname{op}}
\newcommand{\sgn}{\operatorname{sgn}}
\newcommand{\Pp}{\mathcal{P}}
\newcommand{\Z}{\mathbb{Z}}
\newcommand{\rk}{\textrm{rank }}
\newcommand{\C}{k}
\newcommand{\Rep}{\textrm{Rep}}
\newcommand{\N}{\mathbb{N}}
\newcommand{\gr}{\operatorname{gr}}
\newcommand{\SI}{\textrm{SI}}
\newcommand{\PEG}{\operatorname{PEG}}
\newcommand{\GL}{\textrm{GL}}
\newcommand{\SL}{\textrm{SL}}
\newcommand{\Compl}{\operatorname{Com}}
\newcommand{\Hom}{\textrm{Hom}}
\title[Semi-Invariants for Gentle String Algebras]{Semi-Invariants for Gentle String Algebras}
\author{Andrew T. Carroll, Jerzy Weyman}
\address{Department of Mathematics,
		Northeastern University,
		Boston, MA 02115}
\thanks{The first author was supported by NSF grant number DMS-0801220.  The second author was partially supported by NSF grant DMS-0901185.}
\email{carroll.a@husky.neu.edu, j.weyman@neu.edu}
\begin{document}


\begin{abstract}
In this article we give an algorithm for determining the generators and relations for the rings of semi-invariant functions on irreducible components of $\Rep_A(\beta)$ when $A$ is a (acyclic) gentle string algebra and $\beta$ is a dimension vector.  These rings of semi-invariants turn out to be semigroup rings to which we can associate a so-called matching graph.  Under this association, generators for the semigroup can be seen by certain walks on this graph, and relations are given by certain configurations in the graph.  This allows us to determine degree bounds for the generators and relations of these rings.  We show further that these bounds also hold for acyclic string algebras in general.  
\end{abstract}
\maketitle
\section{Introduction}
Gentle string algebras are a generally well-behaved class of algebras, which are special cases of (special) biserial algebras.  They are tame, but exhibit so-called non-polynomial growth.  Gel'fand and Ponomarev \cite{GP} described the indecomposable representations of the Lorentz group, the generalities of which would lead to the definition of biserial algebras.  For the case of string algebras, Butler and Ringel \cite{BR} have described all indecomposable modules, determined the irreducible morphisms, and the possible components that can arise in the Auslander-Reiten quiver.  More recently, string algebras have appeared in connection to cluster algebras, namely from triangulations of unpunctured surfaces with marked points \cite{ABCP}.  The motivation and much of the setup in this article are generalizations of those exhibited by Kra\'{s}kiewicz and Weyman in \cite{WK}.  The main theorem of this article is, in fact, a result in the direction of a conjecture posed by the aforementioned authors concerning so-called matching semigroups.  

Fix an algebraically closed field $\C$.  Suppose that $\C Q/I$ is a gentle string algebra, and $Q$ is a quiver without oriented cycles.  We will show that the components of $\Rep_{\C Q/I}(\beta)$ are parameterized by maps $r:Q_1\rightarrow \N$ satisfying certain properties with respect to the dimension vector $\beta$.  These maps will be called \emph{rank sequences}, and allow us to state the main theorem of the paper:
\begin{theorem}
$\C[\Rep_{\C Q/I}(\beta)_r]^{\SL(\beta)}$ is a semigroup ring with generators in degree at most \[
\sum\limits_{a\in Q_1} 2 \binom{r(a)+1}{2}.\]  Furthermore, the relations in this ring occur in degree bounded by 
\[
	\sum\limits_{a\in Q_1} 8 \binom{r(a)+1}{2} .
	\]
\end{theorem}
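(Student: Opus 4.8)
The plan is to translate the ring $\C[\Rep_{\C Q/I}(\beta)_r]^{\SL(\beta)}$ into a combinatorial semigroup attached to a \emph{matching graph} and then read both degree bounds off the combinatorics of walks in that graph. \textbf{Step 1 (local model).} Since $A=\C Q/I$ is gentle and $Q$ is acyclic, a representation in the component indexed by $r$ is determined, up to the action of $\SL(\beta)$ and up to an affine-bundle factor carrying no non-constant semi-invariants, by the relative positions at each vertex of the partial flags cut out by images and kernels of the structure maps, where the map at an arrow $a$ has rank $r(a)$. So I would first exhibit $\Rep_{\C Q/I}(\beta)_r$ as an affine bundle over a product $\prod_{a\in Q_1}Z_a$ of "double-flag" varieties glued along the vertex spaces, and identify the semi-invariant ring with a ring of $\U$-invariants for a product of general linear groups acting on a product of flag manifolds. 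The gentle hypothesis forces the relevant Cauchy/Littlewood--Richardson decompositions to be governed by \emph{non-crossing} matchings, hence to be multiplicity free; consequently the ring has a basis of $\SL(\beta)$-weight vectors closed under multiplication, i.e.\ it is a semigroup ring $\C[S]$.

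\textbf{Step 2 (matching graph and semigroup).} Next I would describe $S$ explicitly. For each arrow $a$ the local data is a matching between the flag at the source of $a$ and the flag at its target; such a matching decomposes into \emph{elementary arcs}, of which there are $2\binom{r(a)+1}{2}$, contributing $\binom{r(a)+1}{2}$ for each of the two "sides" of $a$. The vertices of the matching graph $\Gamma$ record flag-step weights at the vertices of $Q$, its edges record admissible elementary arcs, and an element of $S$ is precisely a nonnegative integer edge-weighting of $\Gamma$ whose boundary vanishes at every vertex of $Q$ — equivalently, a nonnegative integer combination of closed walks in $\Gamma$, its degree being the total arc-multiplicity.

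\textbf{Step 3 (generator bound).} A generator of $S$ is an element that is not a sum of two nonzero elements of $S$, and such an element is a single indecomposable closed walk $w$ in $\Gamma$. The key observation is that $w$ uses each elementary arc at most once: if an arc over $a$ occurred twice, the portion of $w$ between the two occurrences would itself be a proper closed sub-walk, which could be peeled off, contradicting indecomposability. Hence $w$ traverses at most $2\binom{r(a)+1}{2}$ arcs over each arrow $a$, so $\deg w\le\sum_{a\in Q_1}2\binom{r(a)+1}{2}$, which is the asserted generator bound.

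\textbf{Step 4 (relation bound), the main obstacle.} The toric ideal of $\C[S]$ is generated by binomials $u-v$ in which $u$ and $v$ are products of generators with the same image $\sigma\in S$; that is, by pairs of decompositions of a single closed configuration into indecomposable walks. The crucial combinatorial input is an exchange lemma on $\Gamma$: any two such decompositions are linked by local moves, each of which rewrites a product of at most four indecomposable walks into another product of at most four. This reduces the ideal generators to binomials in which each side is a product of at most four generators, hence of degree at most $4\sum_{a\in Q_1}2\binom{r(a)+1}{2}=\sum_{a\in Q_1}8\binom{r(a)+1}{2}$. I expect the hard part to be precisely this control of the exchange moves — showing that two tilings of the same closed configuration by indecomposable walks differ only by bounded local rearrangements, using the non-crossing structure to rule out long "entanglements" — with the clean verification of multiplicity-freeness in Step 1 being the other delicate point.
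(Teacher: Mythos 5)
Your high-level plan matches the paper's architecture: reduce to a semigroup ring $\C[S]$, model $S$ by walks on a matching graph, bound generators by bounding indecomposable walks, and bound relations by an exchange-move argument. But the technical route in Step~1 is different, and the justifications for the two degree bounds have real gaps.

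\textbf{Step 1 comparison.} The paper does not pass through flag varieties or $\U$-invariants. It uses the observation that $\Rep_{\C Q/I}(\beta)_r$ is a product of DeConcini--Strickland varieties of complexes, one per color class of arrows, and imports their filtration of the coordinate ring by multitableaux. The semigroup-ring structure then comes from: (a) proving via Littlewood--Richardson that the semi-invariant subspace of each filtration layer $M_\lambda$ is at most one-dimensional (the ``Cauchy condition''), and (b) constructing an explicit nonzero semi-invariant $m_\lambda=\hat\delta_\lambda\Delta^\lambda(\underline e)$ in each admissible layer and verifying $m_\lambda m_\mu$ sits in $\mathcal F_{\lambda+\mu}$ with correct leading term. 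Your appeal to ``multiplicity-freeness from non-crossing matchings'' aims at the same conclusion, but it is unproven as stated; the paper's filtration is doing genuine work here and I don't see how to skip it. Whether the flag-variety picture can be made rigorous is a separate question, but it is not what the paper does.

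\textbf{Step 3 gap.} Your argument that an indecomposable closed walk uses each ``elementary arc'' at most once (peel off the sub-walk between the two occurrences) is not justified, and in the paper's model the corresponding claim is in fact \emph{false} at face value: Lemma~7.10 proves only that $f_i(u(w))\le 2$ (hence $u_a\le 2$) for an irreducible alternating string or band — the value $2$ can genuinely occur. The peeling argument has two problems. First, in the paper's matching graph one must preserve the alternating dotted/solid structure and the boundary conditions distinguishing strings (loop-to-loop walks) from bands (closed alternating walks); an arbitrary sub-walk between two occurrences of an edge need not be a valid string or band. Second, for an undirected edge $e=\{i,i'\}$ traversed twice, the two traversals may both start at $i$, in which case the intermediate portion is \emph{not} a closed walk and the split must be done differently. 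The paper handles this carefully in Lemma~7.10 by ordering the occurrences of $i$ and $i+m$ and showing that three occurrences force a sub-band; that is where the bound $2$ (not $1$) comes from. You also would need to justify the count of ``elementary arcs'' as $2\binom{r(a)+1}{2}$; in the paper the bound per arrow is instead derived from $\lambda(a)_{r(a)}=u(a)\le 2$ together with $\lambda(a)_i-\lambda(a)_{i+1}\le 2$, which gives $|\lambda(a)|\le\sum_{i=1}^{r(a)}2i=2\binom{r(a)+1}{2}$. Numerically your target is right, but the combinatorial reduction behind it is not established.

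\textbf{Step 4.} You correctly identify the exchange lemma as the main obstacle. The paper's version is that the congruence $\sim_W$ (same $u$-vector) is generated by two explicit local moves on the matching graph — an $X$-configuration swapping arms at a dotted edge and an $H$-configuration swapping arcs between two dotted edges — and Lemma~6.17 proves this by an ``untwisting'' induction on the length of the longest common partial string. Each move involves at most four partial walks, each satisfying $u_a\le 2$ by Theorem~7.4, yielding the factor $8=4\cdot 2$ in the relation bound. Your four-generator heuristic gives the right number, but without the $X$/$H$ classification and the untwisting induction there is no proof.

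In sum: same skeleton, genuinely different (and unestablished) foundations in Step~1, an incorrect justification in Step~3 whose correct version requires the alternating-walk case analysis of Lemma~7.10, and an unproved exchange lemma in Step~4 whose correct version is the $X$/$H$-configuration theorem.
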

The key remark concerning gentle string algebras is that any string algebra $\C Q/I$ for which $Q$ is acyclic is a quotient of a gentle string algebra with the same underlying quiver.  This implies that irreducible components in $\Rep_{\C Q/I}(\beta)$ are entirely contained in those of $\Rep_{\C Q/I'}(\beta)$.  In particular, the ring of semi-invariants for $\Rep_{\C Q/I}(\beta)$ is a quotient ring of those rings herein calculated for $\Rep_{\C Q/I'}(\beta)$, so bounds in the latter ring hold for the former.  

The article is organized as follows.  In section \ref{sec:preliminaries}, we give some basic definitions regarding gentle string algebras.  Section \ref{sec:complexes} introduces the varieties of complexes, as studied in \cite{DCS}, and exhibits the irreducible components.  In section \ref{sec:TheCoordinateRing}, we give an explicit basis for the coordinate rings as products of minors, thus allowing us to show a $\GL(\beta)$-decomposition of these rings.  This is primarily a collection of results from \cite{DCS} (see also \cite{DRS}).  Section \ref{sec:coordinateringofrep} extends these results to the spaces $\Rep_{\C Q/I}(\beta)$ by viewing these spaces as products of varieties of complexes.  In this way, we will be able to determine the summands of the decomposition of the coordinate ring and show that the subring of semi-invariant functions is a semigroup ring.  Section \ref{sec:COMBINATORICS} exploits the calculations necessary to determine the generators of the aforementioned semigroup, and \ref{sec:semigroupU} gives the combinatorial framework for determining the explicit generators and relations in this semigroup.  This allows us to determine degree bounds for the generators and relations of the generating semi-invariants in section \ref{sec:corollary}.


\section{Preliminaries}\label{sec:preliminaries}

Fix an algebraically closed field $\C$.  A \emph{quiver} $Q=(Q_0, Q_1)$ is a directed graph with $Q_0$ the set of vertices, and $Q_1$ the set of arrows.  Such a quiver comes equipped with two maps $t,h:Q_1 \rightarrow Q_0$ with $ta:=t(a)$ the tail of the arrow $a$, and $ha:=h(a)$ the head of the arrow.  As usual, the path algebra $\C Q$ of the quiver $Q$ is the algebra with basis given by all paths $p$ on $Q$ (together with the length-zero paths $e_x$ for $x\in Q_0$) where the multiplication is given by concatenation of paths, written as composition of maps.  We will often use the symbol $[n]$ for the interval $\{1,2,\dotsc, n\}$ for the sake of compact notation. We now recall the definition of gentle string algebras following Assem and Skowro\'{n}ski in \cite{AS}.

\begin{definition} A finite-dimensional $\C$-algebra $A$ is called a \emph{gentle string algebra} if it admits a presentation $\C Q/I$ satisfying the following properties:
\begin{itemize}
\item[a.] for each vertex $x\in Q_0$, the number of arrows $a$ with $ha=x$ is bounded by 2, and the number of arrows $b$ with $tb=x$ is bounded by 2;
\item[b.] for each arrow $b$ there is at most one arrow $a$ with $ta=hb$ (resp. at most one arrow $c$ with $hc=tb$) such that $ab\notin I$ (resp. $bc\notin I$);
\item[c.] for each arrow $b$ there is at most one arrow $a$ (resp. at most one arrow $c$), as above such that $ab\in I$ (resp. $bc \in I$). 
\item[d.] $I$ is generated by paths of length 2.
\end{itemize}
If $\C Q/I$ satisfies only (a) and (b), then it is said to be a string algebra.
\end{definition}

\begin{definition}
A \emph{coloring} of a quiver $Q$ is a map $c:Q_1 \rightarrow S$, where $S$ is some finite set (whose elements we call colors), such that $c^{-1}(s)$ is a directed path for every $s\in S$.  
\end{definition}

\begin{example}
Let $Q$ be the quiver 
\begin{align*}
\xymatrix{
1 \ar[r]^{a_1} & 2 \ar[r]^{a_2}\ar[dr]_{b} &3 \ar[r]^{a_3}& 5\\ && 4}
\end{align*}
The map $c:Q_1 \rightarrow \{1,2\}$ with $c(a_1)=c(b)=1$ and $c(a_2)=c(a_3)=2$ is a coloring, since $c^{-1}(1)=\{b, a_1\}$ forms a path and $c^{-1}(2)=\{a_3, a_2\}$ as well; however, the map to the same set with $c(a_1)=c(a_3)=1$ and $c(a_2)=c(b)=2$ is not a coloring, since $c^{-1}(2)=\{b, a_2\}$, which cannot be made into a path.  
\end{example}

\begin{remark}
A coloring is simply a partition of the arrows into a disjoint union of paths.  We will often describe such a coloring pictorially by depicting arrows of different colors by different arrow types.
\end{remark}

\begin{definition}Fix a quiver $Q$ and a coloring $c$ of the quiver, define by $I_c$ the ideal generated by all monochromatic paths of length 2, i.e.,  $$I_c = <ba \mid c(a)=c(b),\textrm{ and } ha=tb>.$$  The algebra $kQ/I_c$ is called a \emph{colored algebra} (we will sometimes simply say that $(Q, c)$ is a colored algebra when the field is understood).  
\end{definition}

\begin{proposition}
If $\C Q/I$ is a gentle string algebra such that $Q$ has neither loops nor oriented cycles, then there is a coloring $c$ on $Q$ such that $I_c = I$.  
\end{proposition}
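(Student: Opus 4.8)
The plan is to read the coloring off the relation ideal directly. I would introduce an auxiliary directed graph $H$ whose vertex set is $Q_1$, placing an arrow $a\to b$ in $H$ precisely when $ha=tb$ (so that $ba$ is a genuine path of length $2$ in $Q$) and $ba\in I$. The content of gentleness condition (c) is exactly that, for a fixed arrow $b$, there is at most one arrow $a$ with $ta=hb$ and $ab\in I$ and at most one arrow with $hc=tb$ and $bc\in I$; translating these two statements into $H$ says that every vertex of $H$ has out-degree at most $1$ and in-degree at most $1$. Moreover $H$ has no directed cycle: a directed cycle in $H$ would unwind to a closed directed walk of positive length in $Q$, which must contain an oriented cycle (or a loop), contradicting the hypothesis on $Q$. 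A finite acyclic digraph all of whose in- and out-degrees are at most $1$ is a disjoint union of directed paths $P_1,\dots,P_m$; I would then define $c\colon Q_1\to\{1,\dots,m\}$ by sending each arrow to the index of the unique path $P_i$ containing it.

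Next I would check that $c$ is a coloring in the sense of the paper. Writing a path of $H$ as $a_1\to a_2\to\cdots\to a_l$, the defining condition gives $ha_j=ta_{j+1}$ for all $j$, so $a_1,\dots,a_l$ spells out a directed walk in $Q$; since $Q$ is acyclic this walk visits pairwise distinct vertices (a repeated vertex would produce a closed sub-walk, hence an oriented cycle), so it is a directed path. Therefore each fiber $c^{-1}(i)=P_i$ is a directed path in $Q$, as required.

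Finally I would verify $I_c=I$. For $I\subseteq I_c$, use (d): $I$ is generated by paths of length $2$, and any such generator has the form $ba$ with $ha=tb$ and $ba\in I$, hence $H$ contains the arrow $a\to b$, so $a$ and $b$ lie in the same $P_i$ and $c(a)=c(b)$, giving $ba\in I_c$. For $I_c\subseteq I$, suppose $c(a)=c(b)$ and $ha=tb$; then $a=a_j$ and $b=a_k$ both lie in some $P_i=(a_1\to\cdots\to a_l)$, and along $P_i$ the vertices $ta_1,ha_1,ha_2,\dots,ha_l$ of $Q$ are pairwise distinct by acyclicity. The equality $ha_j=ta_k$ then forces $k=j+1$, and the arrow $a_j\to a_{j+1}$ was put into $H$ precisely because $ba=a_{j+1}a_j\in I$. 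Hence every generator of $I_c$ lies in $I$, so the two ideals agree.

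The step I expect to require the most care is the inclusion $I_c\subseteq I$: one must rule out the possibility that two arrows of the same color are ``accidentally'' composable without their composite lying in $I$. This is exactly where acyclicity of $Q$ is essential, since it is what guarantees that a color class, viewed in $Q$, is an honest directed path with no repeated vertices, so that the only composable pairs within a color class are the consecutive ones that were built into $H$. The remaining bookkeeping — the degree bounds in $H$ and the identification of its components with directed paths — is a direct transcription of conditions (c) and (d) together with the no-cycles hypothesis.
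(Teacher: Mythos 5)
Your proof is correct and takes essentially the same approach as the paper: the paper builds the color classes directly as maximal chains $p(a)=p_{l(a)}(a)\cdots p_1(a)$ satisfying $p_{i+1}(a)p_i(a)\in I$, starting from arrows $a$ that are not preceded (i.e., no $b$ with $ab\in I$), whereas you encode the same data as the auxiliary digraph $H$ with an arrow $a\to b$ whenever $ba\in I$ and then read off its path components. These are the same decomposition; your version simply makes explicit the degree bounds coming from gentleness condition (c), the use of acyclicity to rule out repeated vertices in a color class, and the two inclusions $I\subseteq I_c$ and $I_c\subseteq I$, all of which the paper leaves as brief remarks.
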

\begin{proof}
Let $S$ be a set with elements in bijection with the set of arrows $a\in Q_1$ such that there is no $b\in Q_1$ with $hb=ta$ and $ab\in I$.  Let $s_a\in S$ be the element corresponding to such an $a\in Q_1$ under this bijection.  For each element $s_a\in S$, let $p(a)=p_{l(a)}(a) \dotsc p_1(a)$ be the longest path with $p_1(a)=a$ and $p_{i+1}(a)p_i(a)\in I$.  Notice first that the length is bounded since $Q$ is acyclic.  Additionally, this path is unique and well-defined since for each arrow $p_i(a)$ there is at most one arrow $p_{i+1}(a)$ such that $p_{i+1}(a)p_i(a)\in I$.  
Take $c:Q_1\rightarrow S$ to be the map with $c(p_i(a))=s_aa$ for each $i=1,\dotsc, l(a)$.  By definition of the gentle string algebras, for each $b$ there is at most one arrow $a$ with $ha=tb$ and $ba\notin I$.  Therefore, since $I$ is generated by paths of length 2, so $I_c=I$.  
\end{proof}

\begin{example} \label{examples-CSA} Consider the following examples of gentle string algebras together with their colorings $c$.
\begin{itemize}
\item[i.] Let $Q$ be any orientation of $A_n$, and let $c$ be any coloring of $Q$.  Then $kQ/I_c$ is a colored string algebra.  
\item[ii.] Let $A(n)$ be the quiver on $n+1$ vertices $Q_0 = \{1, \dotsc, n+1\}$ with arrows $a_i, b_i: i\rightarrow i+1$ for $i=1, \dotsc, n$, and the coloring $C=\{b_n b_{n-1} \dotsc b_1;\ a_n a_{n-1} \dotsc a_1\}$.  Then $kQ/I_C$ is a colored string algebra.  The general modules and rings of semi-invariants for this class were studied by Kra\'{s}kiewicz and Weyman in \cite{WK}.  
\item[iii.] The following will be a running example for this paper: 
\begin{align*}
\xymatrix{
			&			& 3 \ar@{->}[dr]^{c_1} && \\
1 \ar@<-.7ex>@{~>}[r]_{a_1} \ar@<.7ex>@{..>}[r]^{b_1} & 2 \ar@{~>}[ur]^{a_2} \ar@{..>}[rr]_{b_2} && 4 \ar@<.7ex>@{..>}[r]^{b_3} \ar@<-.7ex>@{->}[r]_{c_2}& 5 }
\end{align*}
Then the ideal $I_c$ is generated by $\{a_2a_1, b_2b_1, b_3b_2, c_2c_1\}$.
\end{itemize}
\end{example}
\begin{proposition}
Let $\C Q/I$ be a string algebra such that the underlying quiver $Q$ has neither loops nor oriented cycles.  Then there is a coloring $c$ of $Q$ and an algebra epimorphism $p:kQ/I_c \rightarrow kQ/I$ such that $kQ/I_c$ is a gentle string algebra.
\end{proposition}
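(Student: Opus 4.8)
The plan is to build the coloring $c$ directly from the combinatorics of the ideal $I$, mimicking the construction in the previous proposition but being careful that $I$ is no longer assumed to be generated in degree $2$. Since $\C Q/I$ is a string algebra, conditions (a) and (b) of the definition hold: at each vertex at most two arrows enter and at most two leave, and for each arrow $b$ there is at most one arrow $a$ with $ta = hb$ and $ab \notin I$ (and dually on the other side). The first step is to define an equivalence-type relation on $Q_1$: declare $a \sim b$ whenever $ha = tb$ and $ba \notin I$ (or the symmetric condition $hb = ta$, $ab \notin I$). By condition (b), each arrow $a$ has at most one successor $b$ with $a \sim b$ and at most one predecessor, so the transitive closure of $\sim$ partitions $Q_1$ into chains. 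Because $Q$ has no oriented cycles, each chain is finite and is in fact a directed path in $Q$ (no vertex can repeat). This gives a coloring $c: Q_1 \to S$ where $S$ indexes the chains, and by construction $c^{-1}(s)$ is a directed path for each $s$, so $c$ is a coloring in the sense of the paper.

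The second step is to check that $I_c \subseteq I$, which is what produces the epimorphism $p: \C Q/I_c \to \C Q/I$. A generator of $I_c$ is a monochromatic path $ba$ of length two with $c(a) = c(b)$ and $ha = tb$. If $ba \notin I$, then by definition of $\sim$ we would have $a \sim b$, forcing $a$ and $b$ to be \emph{consecutive} in their common chain; conversely if they are consecutive in the chain then $ba \notin I$, while if they are in the same chain but not consecutive, some arrow lies strictly between them, contradicting that $ba$ is a path of length two (the chain, being a directed path, cannot have two arrows $a, b$ with $ha = tb$ unless $b$ immediately follows $a$). Hence $ba \in I$ for every monochromatic length-two path, so $I_c \subseteq I$ and the natural surjection $\C Q/I_c \twoheadrightarrow \C Q/I$ is well-defined; it is an algebra epimorphism since it is induced by the identity on $\C Q$.

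The third step is to verify that $\C Q/I_c$ is a \emph{gentle} string algebra, i.e.\ that it satisfies conditions (a)--(d) of the definition. Condition (d) is immediate since $I_c$ is by construction generated by paths of length $2$. Condition (a) depends only on $Q$, which is unchanged, so it holds because $\C Q/I$ was a string algebra. For conditions (b) and (c): given an arrow $b$, an arrow $a$ with $ta = hb$ satisfies $ab \notin I_c$ exactly when $c(a) \neq c(b)$, and satisfies $ab \in I_c$ exactly when $c(a) = c(b)$. Since at vertex $hb = ta$ there are at most two arrows leaving (condition (a)), there are at most two candidate arrows $a$, and they receive at most two colors; one checks that at most one of them shares the color of $b$ (because $b$'s chain continues through at most one successor arrow) and at most one does not, giving both (b) and (c). The symmetric argument on the tail side of $b$ handles the "resp." clauses.

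The main obstacle I anticipate is the bookkeeping in the third step — specifically, confirming that the coloring produced from $I$ does not accidentally assign the same color to two arrows leaving a common vertex in a way that violates condition (b) or (c). This reduces to the observation that a chain of the relation $\sim$, being a directed path in an acyclic quiver, passes through each vertex at most once, so it uses at most one of the (at most two) arrows out of any given vertex; this is where acyclicity is essential and where one must be most careful. Everything else is a routine unwinding of the string-algebra axioms.
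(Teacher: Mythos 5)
Your construction of the coloring is oriented the wrong way, and this breaks the proof at its first essential step. You declare $a \sim b$ when $ha = tb$ and $ba \notin I$, so your chains are maximal \emph{composable} paths. But then two consecutive arrows $a, b$ in a chain satisfy $c(a) = c(b)$, $ha = tb$, and $ba \notin I$; that means $ba$ is a generator of $I_c$ while $ba \notin I$, so $I_c \not\subseteq I$ and the surjection $\C Q/I_c \to \C Q/I$ does not exist. Your own step~2 makes this visible: you correctly note ``if they are consecutive in the chain then $ba \notin I$,'' and then assert ``Hence $ba \in I$ for every monochromatic length-two path''---the conclusion contradicts the sentence before it. The convention in the paper (evident from the preceding proposition, where chains follow $p_{i+1}p_i \in I$, and from the definition of $I_c$) is the opposite of yours: arrows of the same color are supposed to compose to zero.

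Flipping your definition to $a \sim b$ iff $ba \in I$ does not rescue the argument either. You invoke condition (b) to get ``at most one successor,'' but condition (b) governs arrows with $ab \notin I$; uniqueness of a successor with $ab \in I$ is exactly condition (c), which is precisely what a general string algebra is \emph{not} required to satisfy. So in a non-gentle string algebra an arrow can have two distinct continuations both composing to zero, your chains may branch, and the relation does not partition $Q_1$ into directed paths. This is the real difficulty in the statement, and it is why the paper's proof proceeds by induction on the number of arrows and, in case (2b), makes a careful choice of which of the two candidate relations to absorb into the current colored path so that the deleted-arrow subquiver remains a string algebra and the extension of the inductive coloring still satisfies (b) and (c). Your proposal has no analogue of this choice and therefore does not reach the heart of the problem.
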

\begin{proof}
We will work by induction on the number of arrows in $Q$.  If there is only one arrow, then the proposition is clear, taking $c(a)=s$.  Suppose the proposition holds for all acyclic quivers $Q'$ with $\lvert Q'_1\rvert < \lvert Q_1 \rvert$, and let $x$ be a source in $Q$ (such a vertex exists because $Q$ is acyclic).  Pick $q_1$ an arrow commencing at $x$.  We define a path $q$ by recursively defining its initial subpaths $q(i)=q_i \cdot \dotsc \cdot q_1$ as follows:
\begin{itemize}
\item[(1)] if there is a unique $b\in Q_1$ such that $tb=hq_i$ and $bq_i \in I$, then $q_{i+1}:=b$;
\item[(2)] if there are two arrows $b_1, b_2\in Q_1$ with $tb_1=tb_2=hq_i$ and $b_1q_i, b_2q_i \in I$, then we have the following possibilities:
\begin{itemize}
\item[(2a)] if there is no arrow $a'$ with $h(q_i)=h(a')$, then pick $q_{i+1}:=b_1$ (either arrow would suffice);
\item[(2b)] if, on the other hand, there is an arrow $a'$ with $h(q_i)=h(a')$ and $b_2a'\in I$, (say), then take $q_{i+1}:=b_1$.
\end{itemize}
\item[(3)] Finally, if there is no arrow $b$ with $0\neq bq_i \in I$, then take $q=q(i)$.  
\end{itemize}
Now consider quiver $Q'=(Q_0', Q_1')$ with $Q_0'=Q_0$ and $Q_1'=Q_1\setminus \{a \in q\}$, and the ideal $I'\subset kQ'$ the ideal induced by $I$ by setting $a=0$ when $a\in q$.  Notice that $kQ'/I'$ is still a string algebra, so by induction there is a coloring $c':Q_1'\rightarrow \{2,\dotsc, s\}$ and an algebra epimorphism $p': kQ'/I_{c'} \rightarrow kQ'/I'$ where $kQ'/I_{c'}$ is a colored string algebra.  Take $c: Q_1 \rightarrow \{1,\dotsc, s\}$ the map with $c(a) = c'(a)$ if $a\in Q_1'$ and $c(a)=1$ if $a\in q$.  Then $kQ/I$ is the quotient of $kQ/I_c$ for this coloring by additional relations..  
\end{proof}

\begin{example} \label{example-coloring}
Consider the quiver 
\xymatrix@1@R=.3ex@C=3ex{1 \ar[r]^{a_1} & {2} \ar[dr]^{a_2}& &   {3} \\
		&			&{4} \ar[dr]_{b_3} \ar[ur]^{a_3}&	\\
{5}	\ar[r]_{b_1}	&	{6} \ar[ur]_{b_2} & 		&	{7}}.
subject to $I=<a_3a_2,\ a_2a_1,\ b_3b_2,\ b_3a_2>$  Starting with vertex (1), and $q_1=a_1$, we see that one must have $q=a_3a_2a_1$.  So 
\begin{align*}Q'&=\xymatrix@R=1ex{{1}  & {2} & &   {3} \\
		&			&{4} \ar[dr]_{b_3} &	\\
{5}	\ar[r]_{b_1}	&	{6} \ar[ur]_{b_2} & 		&	{7}}\end{align*}
and $I'=b_3b_2$.  This is indeed a colored string algebra, when colored by $c'(b_3)=c'(b_2)=3$, $c'(b_1)=2$, so we would like to extend this coloring to one on $Q$ itself by following the proof.  This gives $c(b_3)=c(b_2)=3$, $c(b_1)=2$, and $c(a_i)=1$.  It is clear that $kQ/I_c$ projects onto $kQ/I$ with kernel $<b_3a_2>$.  \end{example}

\subsection{Representation Spaces}
Recall that a dimension vector for a quiver $Q$ is a vector $\beta\in \N^{Q_0}$.  Suppose that $I$ is generated by paths, say $p_1,\dotsc, p_l$.  Then the space of representations of $\C Q/I$ of dimension $\beta$ is
\[
\Rep_{\C Q/I}(\beta):=\{V \in \Rep_{kQ/I}\mid \dim V_x = \beta_x,\ x\in Q_0\}.
\]  Fixing vector spaces $V_x$ of dimensions $\beta_x$ for $x\in Q_0$, one can view this space as 
$$\Rep_{\C Q/I}(\beta)=\left\{(V_a)_{a\in Q_1} \in \prod\limits_{a\in Q_1} \Hom_k(V_{ta}, V_{ha}) \mid V(p_i) = 0 \textrm{ for all } i=1,\dotsc, l\right\},$$ where $V(p_i)$ is the composition of the maps corresponding to the arrows in the path $p_i$.  Notice that the algebraic group $\GL(\beta)=\prod_{x\in Q_0} \GL(\beta_x)$ acts linearly on $\Rep_{\C Q/I}(\beta)$, and the orbits correspond to isoclasses of $\C Q/I$ modules.

In the context of this article, suppose that $kQ/I_c$ is a gentle string algebra with $c: Q_1 \mapsto \{1,\dotsc, s\}$, and let $C=\{c_1,\dotsc, c_s\}$ the set of colored paths associated with $c$.  Suppose that $c_i=a_{n_i}^{(i)}\dotsc a_{2}^{(i)}a_1^{(i)}$ for each $i$.  Then for fixed $\beta$, we have $$\Rep_{\C Q/I_c}(\beta) = \prod\limits_{i=1}^s \left\{(V_{a^{(i)}_j})_{j=1,\dotsc, n_i} \in \prod\limits_{j=1}^{n_i} \Hom_\C(V_{ta_j^{(i)}}, V_{ha_j^{(i)}})\mid V_{a^{(i)}_{j+1}}\circ V_{a^{(i)}_j}=0\right\},$$ i.e., as a variety $\Rep_{(Q, c)}(\beta)$ is the product of the well-known varieties of complexes.  We remark that this result holds in more generality than gentle string algebras: any algebra with presentation $\C Q/I$ that admits a coloring $c$ such that $I=I_c$ has the property that $\Rep_{\C Q/I}(\beta)$ is the product of varieties of complexes.  

\subsection{Semi-Invariants}
Though many of these definitions hold for $G$-varieties in general, we will state them only for representation spaces of a quiver.
\begin{definition}
Suppose that $\C Q/I$ is a quiver with relations, and let $\Rep_{\C Q/I}(\beta)_i$ be an irreducible component of the representation space.  Then 
\[
	\SI_{\C Q/I}(\beta, i):= \C[\Rep_{\C Q/I}(\beta)_i]^{\SL(\beta)},
	\]
where $\SL(\beta):=\prod\limits_{x\in Q_0} \SL(\beta_x)$, is called the \emph{ring of semi-invariant} functions on $\Rep_{\C Q/I}(\beta)_i$.  The aforementioned ring has a decomposition into \emph{weight spaces} 
\[
	\SI_{\C Q/I}(\beta, i)=\bigoplus_{\chi\in \operatorname{char} \GL(\beta)} \SI_{\C Q/I}(\beta, i)_\chi\] where 
\[
	\SI_{\C Q/I}(\beta, i)_\chi:= \{f\in \C[\Rep_{\C Q/I}(\beta)_i] \mid g.f = \chi(g)\cdot f \textrm{ for all } g\in \GL(\beta)
	\]
\end{definition}
We will calculate the ring of semi-invariants, however, the weight space decomposition remains unknown.


\section{The Variety of Complexes}\label{sec:complexes}
We recall the definition of the varieties of complexes as given by DeConcini and Strickland in \cite{DCS}.  We describe their irreducible components, and a decomposition of their coordinate rings by means of Schur modules.  

Fix a non-negative integer $n$, and an element $\beta \in \N^{n+1}$.  Let $\{V_i\}_{i=1, \dotsc, n+1}$ be a collection of $k$-vector spaces with $\dim_k V_i = \beta_i$.  Define the variety of complexes by $$\Compl_{n+1}(\beta):= \left\{(A_i)_{i\in[n]} \in \bigoplus\limits_{i\in[n]} \Hom(V_i, V_{i+1}) \mid A_{i+1}\cdot A_i = 0\quad i\in [n]\right\}.$$  We simply write $\Compl(\beta)$ if the length $n$ is understood.  Notice that the algebraic group $\GL(\beta)=:\prod\limits_{i\in [n+1]}\GL(\beta_i)$ acts by simultaneous change of basis on $\Compl_{n+1}(\beta)$.  We will now describe the orbits of this action.
\begin{definition} A sequence of natural numbers $r=(0:=r_0, r_1, \dotsc, r_n, r_{n+1}:= 0)$ is called a \emph{rank sequence for $\beta$} if $r_{i-1} + r_i \leq \beta_i$ for $i\in [n+1].$  Furthermore, we define a partial order on these rank sequences by the condition $r\preceq s$ if and only if $r_i \leq s_i\ $ for each $i\in [n]$. \end{definition}
\begin{definition}
For a fixed rank sequence $r$ for $\beta$, we denote by $\Compl(\beta, r)$ (resp. $\Compl^\circ(\beta, r)$) the subsets of elements $(A_i)_{i\in [n]}$ in $\Compl(\beta)$ such that $\rk A_i \leq r_i$ (resp. $\rk A_i < r_i$) for each $i=1,\dotsc, n$.  
\end{definition}
Notice that if $\underline{A}\in \Compl(\beta)$ then the sequence $r(\underline{A})$ with $\rk A_i \leq r(\underline{A})_i$ is a rank sequence.  

\begin{proposition}\label{rank-sequence-proposition}
If $r$ is a rank sequence for $\beta$, then $\Compl^\circ(\beta, r)$ is an orbit in $\Compl(\beta)$ under $\GL(\beta)$ and $\Compl(\beta, r)$ is its closure.  
\end{proposition}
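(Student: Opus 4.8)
The plan is to show both claims by explicitly constructing, for each $\underline{A} \in \Compl^\circ(\beta, r)$, a normal form under the $\GL(\beta)$-action, and then identifying the closure. First I would observe that a complex $\underline{A} = (A_i)_{i\in[n]}$ with $\operatorname{rk} A_i = r_i$ for all $i$ decomposes each vector space $V_i$ compatibly with the maps: inside $V_i$ we have the subspace $\im A_{i-1}$ of dimension $r_{i-1}$, and since $A_i A_{i-1} = 0$ this is contained in $\ker A_i$; the image of the composite map structure then lets us choose a basis of each $V_i$ so that $A_i$ sends the first $r_i$ basis vectors (a complement to $\ker A_i$) isomorphically onto the first $r_i$ basis vectors of $\im A_i \subseteq V_{i+1}$, and kills the rest. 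The condition $r_{i-1} + r_i \le \beta_i$ is exactly what guarantees that a complement to $\ker A_i$ of dimension $r_i$ can be chosen disjoint from $\im A_{i-1}$ of dimension $r_{i-1}$, so this normal form exists. This shows $\Compl^\circ(\beta, r)$ is a single $\GL(\beta)$-orbit: any two elements are carried to the same normal form, hence to each other.

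Next I would address irreducibility and the closure statement. The orbit $\Compl^\circ(\beta, r)$ is irreducible as the image of the connected group $\GL(\beta)$. For the closure, I would argue two inclusions. For $\overline{\Compl^\circ(\beta,r)} \subseteq \Compl(\beta, r)$: the conditions $\operatorname{rk} A_i \le r_i$ are closed (they are cut out by vanishing of $(r_i+1)$-minors), and the equations $A_{i+1} A_i = 0$ are closed, so $\Compl(\beta, r)$ is closed and contains the orbit, hence contains its closure. For the reverse inclusion $\Compl(\beta, r) \subseteq \overline{\Compl^\circ(\beta, r)}$: given any $\underline{A} \in \Compl(\beta, r)$, with actual ranks $r' = r(\underline{A}) \preceq r$, I would exhibit a one-parameter degeneration — e.g. using the normal form for $\underline{A}$ with respect to $r'$ and perturbing it by adding scaled "extra" rank-one blocks in the slots where $r'_i < r_i$ (choosing the blocks so the complex condition is preserved for all values of the parameter $t \ne 0$, and recovering $\underline{A}$ at $t = 0$). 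One must check these perturbations can be made simultaneously while respecting $A_{i+1}A_i = 0$; this is where the combinatorics of where rank can be increased comes in, and it is the step I expect to require the most care.

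The main obstacle, then, is the reverse closure inclusion: showing that every rank-deficient complex in $\Compl(\beta, r)$ actually lies in the closure of the top-rank orbit, i.e. that the orbit poset structure is governed exactly by the order $\preceq$ on rank sequences. The subtlety is that one cannot independently bump up $\operatorname{rk} A_i$ for each $i$ — increasing the rank of $A_i$ constrains what happens at $A_{i-1}$ and $A_{i+1}$ through the complex relations and the dimension bounds $r_{i-1} + r_i \le \beta_i$. A clean way to handle this is to induct on $\sum_i (r_i - r'_i)$: it suffices to show that if $r' \prec r$ with $r'$ differing from some $\preceq$-maximal-but-not-equal-to-$r$ sequence, then the orbit closure of $r'$ meets the orbit closure of a strictly larger rank sequence, via a single rank-one degeneration in one coordinate that can be absorbed without violating the neighboring relations. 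Standard references (DeConcini–Strickland) establish precisely this, so I would either cite \cite{DCS} for the closure/orbit structure or reproduce the one-parameter-subgroup argument in the normal-form coordinates above.
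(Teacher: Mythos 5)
Your approach matches the paper's: both put a complex of prescribed ranks into a normal form determined solely by the rank sequence (the paper by invoking the classification of indecomposables $E_{i,j}$ of the equioriented $A_{n+1}$ quiver and observing that only $E_{i,i}$ and $E_{i,i+1}$ survive the complex relations, you by choosing bases adapted to $\im A_{i-1}\subseteq\ker A_i\subseteq V_i$ --- the same decomposition in different clothes), then observe $\Compl(\beta,r)$ is closed, and finally produce the reverse inclusion via one-parameter families that alter a single rank by one, chained by induction. The subtlety you flag about simultaneous perturbations interfering at neighboring indices dissolves precisely because of the normal form: the paper's degeneration $I^{(j)}_r(\nu)$ replaces one $E_{j,j+1}$ summand with $E_{j,j+1}(\nu)$ (rank one for $\nu\ne 0$, rank zero at $\nu=0$), which lives in a single direct summand supported at positions $j,j+1$, so the complex relations at all indices hold automatically for every $\nu$ and one never needs to perturb more than one coordinate at a time.
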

\begin{proof}
It is instructive to consider this proof by understanding representations of the quiver $$\mathcal{A}_{n+1}^{eq}=1 \xrightarrow{a_1} 2 \xrightarrow{a_2} \dotsc \xrightarrow{a_n} n+1$$ of dimension vector $\beta$.  For $1\leq i \leq j \leq n+1$, let $E_{i, j}$ be the representation of $\mathcal{A}_{n+1}^{eq}$ with $$E_{i, j}(l) = \begin{cases} k & \textrm{if } i\leq l \leq j \\ 0 & \textrm{ otherwise},\end{cases}$$ together with maps 
\begin{align*}
E_{i,j}(a_l)&: E_{i,j}(l) \rightarrow E_{i,j}(l+1)\\
E_{i,j}(a_l)&: 1 \mapsto 1 \quad \textrm{ if } i\leq l <j\\ E_{i,j}(a_l) &= 0 \qquad \textrm{ otherwise}.\end{align*}
It is well known that representations $E_{i, j}$ for $i\leq j$ give a complete list of indecomposable representations of $\mathcal{A}_{n+1}^{eq}$, so any representation is a direct sum of these.  It is furthermore clear that $\Compl_{n+1}(\beta) \subset \Rep(\mathcal{A}_{n+1}^{eq}, \beta)$, and so any element in $\Compl_{n+1}(\beta)$ is a direct sum of representations $E_{i, j}$ which satisfy the relation $E_{i, j}(a_{l+1}) \circ E_{i, j}(a_l) = 0$ for $l\in [n]$.  This leaves only the representations $E_{i, i+1}$ for $i\in [n]$, and $E_{i,i}$ for $i \in [n+1]$.  Therefore, any element $M$ in $\Compl_{n+1}(\beta)$ is contained in the $\GL(\beta)$-orbit with $$M(\underline{t}, \underline{s})=\bigoplus\limits_{i\in [n]} E_{i, i+1}^{t_i} \oplus \bigoplus\limits_{i\in [n+1]} E_{i,i}^{s_i}$$ for some $\underline{t}\in \N^{n},\ \underline{s} \in \N^{n+1}$ satisfying the condition that $t_i + t_{i-1}+ s_i = \beta_i$ for $i\in [n+1]$.  Notice that $\rk(M(\underline{t}, \underline{s}))_i = t_i$, so $\Compl^\circ(\beta, r)$ is a single orbit in $\Compl(\beta)$.

We now show that $\Compl(\beta, r)$ is the closure of $\Compl^\circ(\beta, r)$.  Notice that $\Compl(\beta, r)$ is indeed closed, since in $\Compl(\beta)$ it is defined by the simultaneous vanishing of all $r_i\times r_i$ minors of the maps $A_i$.  We will then show that if $r'$ is obtained from $r$ by decreasing some $r_j$ by one, then $\Compl^\circ(\beta, r')$ is in the closure of $\Compl^\circ(\beta, r)$.  Thus, 
\[
	\Compl^\circ(\beta, r) \subset \bigcup\limits_{r'\preceq r} \Compl^\circ =\Compl(\beta, r) \subset \overline{\Compl^\circ(\beta, r)}.
	\]
It suffices to show that there is a continuous map $I^{(j)}_r: \C \rightarrow \Compl(\beta)$ such that $I^{(j)}_r(\nu) \in \Compl^\circ(\beta, r)$ for $\nu\neq 0$ and $I^{(j)}_r(0)\in \Compl^\circ(\beta, r-e_j)$.  Let $E_{j,j+1}(\nu)$ be the representation with 
\[
E_{j,j+1}(\nu)_i = \begin{cases} \C & \textrm{ if } i\in \{j, j+1\}\\ 0 & \textrm{ otherwise}\end{cases},\] and $E_{j, j+1}(\nu)(a_j): 1 \mapsto \nu$.  Define $$I^{(i)}_r(\nu) = \bigoplus\limits_{i\in [n]\setminus j} E_{i,i+1}^{r_i} \oplus E_{j, j+1}^{r_j-1} \oplus E_{j, j+1}(\nu) \oplus \bigoplus\limits_{i\in [n+1]}E_{i,i}^{s_i}.$$  Then $I_r^{(i)}(\nu) \in \Compl^\circ(\beta, r)$ for $\nu\in k^*$, and $I_r^{(i)}(0) \in \Compl^\circ(\beta, r-e_j)$.  This concludes the proof.
\end{proof}

\begin{proposition} \label{irr-cpts}
If $r$ is a maximal rank sequence, then $\Compl^\circ(\beta, r)$ is an open orbit (therefore irreducible), and $\Compl(\beta, r)$ is its closure.  Furthermore, $\Compl(\beta, r)$ is an irreducible component of $\Compl(\beta)$ and the set of all $\Compl(\beta, r)$ for $r$ a maximal rank sequence is a complete list of the irreducible components of $\Compl(\beta)$.  
\end{proposition}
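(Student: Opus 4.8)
The plan is to derive everything from Proposition~\ref{rank-sequence-proposition}, which already tells us that each $\Compl^\circ(\beta,r)$ is a single $\GL(\beta)$-orbit, that $\Compl(\beta,r)=\overline{\Compl^\circ(\beta,r)}$, and that the stratification $\Compl(\beta,r)=\bigcup_{r'\preceq r}\Compl^\circ(\beta,r')$ holds. First I would note that there are only finitely many rank sequences for $\beta$, since each is bounded coordinatewise by $\beta$; hence they form a finite poset under $\preceq$, and every rank sequence is dominated by at least one maximal one. Combining this with the orbit decomposition $\Compl(\beta)=\bigsqcup_{r}\Compl^\circ(\beta,r)$ yields the finite covering
\[
\Compl(\beta)=\bigcup_{r\ \mathrm{maximal}}\Compl(\beta,r),
\]
in which each $\Compl(\beta,r)$ is a closed irreducible subset: closed because it is an orbit closure, and irreducible because $\Compl^\circ(\beta,r)$ is the image of the irreducible group $\GL(\beta)$ under an orbit map and thus irreducible, whence so is its closure.

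Next I would establish the open-orbit assertion: for $r$ maximal, $\Compl^\circ(\beta,r)$ is open in $\Compl(\beta)$. Since the orbits are pairwise disjoint and $\Compl^\circ(\beta,s)\subseteq\Compl(\beta,t)$ holds exactly when $s\preceq t$, the complement decomposes as
\[
\Compl(\beta)\setminus\Compl^\circ(\beta,r)=\bigcup_{s\neq r}\Compl(\beta,s),
\]
where the key point for the inclusion from left to right is that $r\not\preceq s$ for every rank sequence $s\neq r$, which is precisely the maximality of $r$. As the right-hand side is a finite union of closed sets, $\Compl^\circ(\beta,r)$ is open in $\Compl(\beta)$ (in particular open and dense in its closure $\Compl(\beta,r)$), and, as noted, irreducible.

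Finally I would deduce the statement on irreducible components. We have written $\Compl(\beta)$ as a finite union of irreducible closed subsets $\Compl(\beta,r)$, $r$ maximal; by the standard fact that such a union realizes the decomposition into irreducible components once one discards any member contained in another, it remains only to rule out a containment $\Compl(\beta,r)\subseteq\Compl(\beta,r')$ with $r\neq r'$ both maximal. Such a containment would force the orbit $\Compl^\circ(\beta,r)$ to lie inside $\Compl(\beta,r')=\bigsqcup_{s\preceq r'}\Compl^\circ(\beta,s)$, hence to coincide with $\Compl^\circ(\beta,s)$ for some $s\preceq r'$; then $r=s\preceq r'$, and maximality of $r$ gives $r=r'$, a contradiction. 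Thus the sets $\Compl(\beta,r)$ with $r$ maximal are pairwise incomparable and constitute exactly the irreducible components of $\Compl(\beta)$.

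The only step carrying content beyond formal bookkeeping is the identification of the closure order on orbits with the partial order $\preceq$ on rank sequences, and the consequent incomparability of the maximal strata; both of these are immediate from Proposition~\ref{rank-sequence-proposition}. The one point requiring genuine care is that $\Compl(\beta)$ is typically reducible, so the statement ``$\Compl^\circ(\beta,r)$ is open in $\Compl(\beta,r)$'' must be upgraded to openness in all of $\Compl(\beta)$, and this is exactly where the maximality hypothesis on $r$ is used.
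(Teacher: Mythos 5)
Your argument is correct and arrives at the right conclusions, but the route to openness is genuinely different from the one in the paper. The paper's proof works in the ambient space $\bigoplus_i \Hom(V_i,V_{i+1})$: it introduces the explicitly determinantal sets $R^{\geq}(\beta,r)$ (open, cut out by non-vanishing of $r_i\times r_i$ minors), $R^{\leq}(\beta,r)$ (closed, cut out by vanishing of $(r_i+1)\times(r_i+1)$ minors) and $R^{>}(\beta,r)$, and observes that for $r$ maximal one has $\Compl(\beta)\cap R^{>}(\beta,r)=\emptyset$, so $\Compl^\circ(\beta,r)=\Compl(\beta)\cap R^{\geq}(\beta,r)$ is open. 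Your proof avoids the ambient space entirely and works intrinsically in $\Compl(\beta)$: since there are finitely many rank sequences, the complement of $\Compl^\circ(\beta,r)$ is the finite union $\bigcup_{s\neq r}\Compl(\beta,s)$ of orbit closures, hence closed, with maximality of $r$ ensuring that $\Compl^\circ(\beta,r)$ is not swallowed by any $\Compl(\beta,s)$. The tradeoff is that the paper's version gives a concrete system of equations exhibiting the open set (useful later when one actually manipulates minors in $\C[\Compl(\beta,r)]$), while yours is shorter and more self-contained, relying only on Proposition~\ref{rank-sequence-proposition}. You also spell out the pairwise non-containment of the maximal strata more carefully than the paper does.

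One small correction: you say maximality of $r$ is needed for ``the inclusion from left to right'' in $\Compl(\beta)\setminus\Compl^\circ(\beta,r)=\bigcup_{s\neq r}\Compl(\beta,s)$, but it is actually the reverse inclusion that uses it. If $x\in\Compl(\beta,s)$ with $s\neq r$, then $x$ lies in some orbit $\Compl^\circ(\beta,t)$ with $t\preceq s$; the claim $x\notin\Compl^\circ(\beta,r)$ requires $t\neq r$, i.e.\ that $r\preceq s$ forces $r=s$, which is exactly maximality. The left-to-right inclusion is immediate from $\Compl^\circ(\beta,s)\subseteq\Compl(\beta,s)$ and needs no hypothesis on $r$. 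The mathematics is otherwise sound.
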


\begin{proof}
For $r$ a fixed rank sequence, define the sets $R^{\geq}(\beta, r), R^{\leq}(\beta, r)\subset \bigoplus\limits_{i\in[n]} \Hom(V_i, V_{i+1})$ of elements $A=(A_j)_{j\in [n]}$ such that $\rk A_j \leq r_j$ (resp. $\rk A_j \geq r_j$) for $j\in [n]$.  Furthermore, we let $$R^{>}(\beta, r) = \bigcup\limits_{s\succ r} R^{\geq}(\beta, s) \subset R^{\geq}(\beta, r).$$  We will determine when such sets are closed or open by describing their defining equations.  For a pair of sets $(I_i, J_i) \subset [\beta_i]\times [\beta_{i+1}]$ with $\lvert I_i \rvert = \lvert J_i \rvert$, let $\Delta^{(i)}_{(I_i,J_i)}$ be the function on $\Compl(\beta)$ such that $\Delta^{(i)}_{(I_i, J_i)}\left( (A_j)_{j\in [n]}\right)$ is the minor of the matrix $A_i$ with columns given in order by $I_i$ and rows given by $J_i$.  For a rank sequence $r$, let $S^{(i)}_{r_i}$ be the set of all pairs of sets $(I_i, J_i)\subset [\beta_i]\times [\beta_{i+1}]$ with $\lvert I_i \rvert =\lvert J_i \rvert = r_i$.  Then we have the following descriptions of the sets above:
\begin{align*}
R^{\leq}(\beta, r) &= \bigcap\limits_{i\in [n]} \bigcap\limits_{(I_i, J_i)\in S^{(i)}_{r_i+1}} V(\Delta^{(i)}_{I_i, J_i})\\
R^{\geq}(\beta, r) &= \bigcap\limits_{i\in[n]} \bigcup\limits_{(I_i, J_i)\in S^{(i)}_{r_i}} NV(\Delta^{(i)}_{I_i, J_i})
\end{align*}
where $V(T)$ is the set of common zeros for the ideal $T$, and $NV(T)$ is the set of elements which do not vanish on all of $T$.  
 
From this description, $R^{\leq}(\beta, r)$ is closed, and $R^{\geq}(\beta, r)$ is open.  Note that $\Compl(\beta, r) = \Compl(\beta) \cap R^{\leq}(\beta, r)$ is closed in $\Compl(\beta)$.  Furthermore, 
\begin{align*}
\Compl^\circ(\beta, r) &= \Compl(\beta) \cap \left( R^{\geq}(\beta, r) \setminus R^{>}(\beta, r)\right)\\
				&= (\Compl(\beta) \cap R^{\geq}(\beta, r)) \setminus (\Compl(\beta) \cap R^{>}(\beta, r)).
				\end{align*}
However, if $r$ is a maximal rank sequence, then $\Compl(\beta) \cap R^{>}(\beta, r)$ is empty, for there are no complexes with larger rank sequence.  Thus, $\Compl^\circ(\beta, r) = \Compl(\beta) \cap R^{\geq}(\beta, r)$, which is open in $\Compl(\beta)$.  We have already seen that $\Compl^\circ(\beta, r)$ is a $\GL(\beta)$ orbit from proposition \ref{rank-sequence-proposition} and so it is irreducible.  Furthermore, $\Compl(\beta, r)$ is its closure, and clearly $\Compl(\beta)=\bigcup\limits_{r \textrm{ maximal}} \Compl(\beta, r)$ and $\Compl(\beta, r)\not\subset \Compl(\beta, r')$ whenever $r\neq r'$ are maximal rank sequences.  Therefore, this is a complete list of the irreducible components.  
\end{proof}

We have now shown that $\Compl(\beta, r)$ is a $\GL(\beta)$-variety.  This implies that $k[\Compl(\beta, r)]$ is a $\GL(\beta)$-module.  In the following section we will explore this structure.

\section{The Coordinate Ring $\C[\Compl_n(\beta, r)]$}\label{sec:TheCoordinateRing}
In this section, we describe the explicit basis of $\C[\Compl(\beta, r)]$ via minors prescribed by Young tableaux.  This actually illustrates a filtration on the coordinate ring whose associated graded is given by Schur modules.  The early portion of this section is a recollection of Young diagrams.  In the last part of this section, we describe a filtration on $\C[\Compl(\beta, r)]$ whose associated graded ring.  For the remainder of this section, we fix $n$, a dimension vector $\beta$, and a maximal rank sequence $r$ for $\beta$.  We will first set up notation for Young diagrams.

\begin{definition}
A \emph{Young diagram} $\lambda$ is a sequence of non-increasing positive integers $\lambda_1\geq \dotsc \geq \lambda_m$,  $m$ is called the number of parts of $\lambda$.
\end{definition}
We will draw Young diagrams as a table of rows of left-justified boxes such that the $i$-th row has $\lambda_i$ boxes.  For example, the diagram $(4,3,3,2)$ would be depicted by the figure
\begin{align*}
\ydiagram{4,3,3,2}
\end{align*}
For a Young diagram $\lambda$, we denote by $\lambda'$ the transpose diagram, where $\lambda'_i = \{i \mid \lambda_j\geq i\}$.  
\begin{definition}
Let $\lambda=\lambda_1\geq \dotsc \geq \lambda_m$, and $p$ be a positive integer with $p\geq m$.  Denote by $[p-\lambda]$ the diagram with $p$ parts and $[p-\lambda]_j = \lambda_1-\lambda_{p-j+1}$ (in this expression, if $\lambda_{p-j+1}$ is not defined, then it is considered to be $0$).
\end{definition}

\begin{definition}
A filling of $\lambda$ is an assignment of non-negative integers $t=\{t_{k,l}\}_{\substack{k=1,\dotsc, m\\ l=1,\dotsc, \lambda_k}}$ to the boxes of $\lambda$.  Such a filling is called \emph{standard} if $t_{k,l}<t_{k+1,l}$ whenever both are defined, and $t_{k,l}\leq t_{k, l+1}$ (i.e., the filling is column strictly increasing and row weakly increasing).  To a filling $t$ of $\lambda$, we associated a sequence of sets $I(t)=(I(t)_1,\dotsc, I(t)_{\lambda_1})$ where $I_l = \{t_{1,l}, t_{2,l}, \dotsc, t_{\lambda'_l, l}\}$.
\end{definition}
\begin{example}\label{ex:youngdiagram}
Again, consider the diagram $\lambda=(4,3,3,2)$, so $\lambda'=(4,4,3,1)$.  If $p=6$, then $[p-\lambda]=(4,4,2,1,1,0)$.  This is most easily calculated by placing $\lambda$ in the top-left corner of a $p\times \lambda_1$ rectangle
\begin{align*}
\ytableaushort{\none, \none, \none}*{4,4,4,4,4,4}*[\bullet]{4,3,3,2}
\end{align*}
and $[p-\lambda]$ is the diagram left over in the bottom-right corner. 

The following is a standard filling $t$ of $\lambda$.
\begin{align*}
\begin{ytableau}
1 & 2 & 2 & 4\\
2 & 4 & 5 \\
3 & 6 & 7\\
7 & 7
\end{ytableau}
\end{align*}
and the associated sequence of sets is $I(t)=(\{1,2,3,7\}, \{2,4,6\}, \{2,5,7\}, \{4\})$.  
\end{example}

\begin{definition}
Let $V$ be a vector space, and $\lambda$ a Young diagram with at most $\dim V$ parts.  We will denote by $\bigwedge^\lambda V$ the product of exterior powers of $V$ prescribed by the columns of $\lambda$.  Namely 
$$\bigwedge^\lambda V = \bigwedge^{\lambda'_1} V \otimes \dotsc \otimes \bigwedge^{\lambda'_{\lambda_1}} V.$$
\end{definition}
\begin{remark} 
Notice that the basis of $\bigwedge^\lambda V$ is in one-to-one correspondence with column-increasing fillings of $\lambda$ consisting of integers $1, \dotsc, \dim V$.  For a set $I = \{i_1,\dotsc, i_k\}$, let $e_I =e_{i_1} \wedge \dotsc \wedge e_{i_k}$.  If $t$ is a column-increasing filling of $\lambda$ with integers from the set $\{1,\dotsc, \dim V\}$, then we have the associated sequence of sets $(I(t)_1,\dotsc, I(t)_{\lambda_1})$ and the associated basis element in $\bigwedge^\lambda V$ is $e_{I(t)_1} \otimes \dotsc \otimes e_{I(t)_{\lambda_1}}$.  
\end{remark}
\begin{example}\label{ex:youngdiagramsmall}
Taking $\lambda=(3,2,2)$, the filling
\begin{align*}
t=\begin{ytableau}
1 & 2 & 4 \\ 
4 & 3 \\
5 & 4 
\end{ytableau}
\end{align*}
corresponds to the element $(e_1\wedge e_4 \wedge e_5) \otimes (e_2\wedge e_3 \wedge e_4) \otimes e_4$ in the space 
\begin{align*}
\bigwedge^\lambda V = \bigwedge^3 V \otimes \bigwedge^3 V \otimes \bigwedge^1 V
\end{align*}
when $\dim V \geq 5$.  
\end{example}

We will speak of fillings of $\lambda$ and basis elements of $\bigwedge^\lambda V$ interchangeably. 
\begin{definition}
Let $V$ be a $\C$-vector space and $\lambda$ a Young diagram with at most $\dim V$ parts.  Define the map $$\op_\lambda: \bigwedge^{[\dim V-\lambda]} V \rightarrow \bigwedge^{\lambda} V$$ as follows: if $t$ is a column-increasing filling of $[\dim V - \lambda]$, and $I(t) = (I(t)_1,\dotsc, I(t)_{\lambda_1})$ is the associated sequence of sets, then take $t'$ the filling of $\lambda$ with associated sequence of sets $I(t') = (I(t')_1, \dotsc, I(t')_{\lambda_1})$ such that $I(t')_j = \{1,\dotsc, \dim V\} \setminus I(t)_{\lambda_1-j+1}$.  Then $$\op_\lambda (t) := \left( \prod\limits_{j=1}^{\lambda_1} \sgn(I(t')_j, I(t)_{\lambda_1-j+1})\right) t'.$$  Here, $\sgn(I, J)$ is the sign of the permutation $(I, J)$ with both $I, J$ written in increasing order.  
\end{definition}

\begin{example}
Suppose that $\dim V = 5$, and $\lambda = (3,2,2)$ as in example \ref{ex:youngdiagramsmall}.  Then $[\dim V - \lambda] = (3,3,1,1,0)$.  Take $t$ the filling of $[\dim V-\lambda]$ given by 
\begin{align*}
t=\begin{ytableau}
1 & 1 & 2 \\
2 & 5 & 3 \\
3 \\ 
5 \end{ytableau}
\end{align*}
then 
\begin{align*}
t'= \begin{ytableau}
1 & 2 & 4 \\
 4 & 3 \\ 
 5 & 4 \end{ytableau}
\end{align*}
Since $\sgn(14523)=1$, $\sgn(23415)=-1$ and $\sgn(12354)=-1$, we have that $$\op_\lambda (t) = t'.$$
\end{example}

\begin{definition}
Suppose that $V_i, V_{i+1}$ are $\C$-vector spaces, and $\lambda$ is a Young diagram with at most $\min(\dim V_i, \dim V_{i+1})$ parts.  Define a map $\delta^{(i)}_\lambda: \bigwedge^{\lambda} V_i \otimes \bigwedge^{\lambda} V_{i+1} \rightarrow \C[\Compl(\beta, r)]$ as follows:  suppose that $t_i$ is a filling of $\lambda$ from the integers $\{1,\dotsc, \dim V_i\}$ with associated sequence of sets $I(t_i)$, and $t_{i+1}$ is a filling of $\lambda$ from the integers $\{1,\dotsc, \dim V_{i+1}\}$ with associated sequence of sets $I(t_{i+1})$.  Then $$\delta_\lambda:t_i \otimes t_{i+1} \mapsto \prod\limits_{j=1}^{\lambda_1} \Delta^{(i)}_{I(t_i)_j, I(t_{i+1})_j}.$$  Recall that $\Delta_{I, J}^{(i)}$ is the minor of the matrix $A_i$ with columns given by $I$ and rows given by $J$.  

If $\lambda=(\lambda(1),\dotsc, \lambda(n))$ is a sequence of Young diagrams such that $\lambda(i)$ has at most $\min(\dim V_i, \dim V_{i+1})$ parts, then take 
\begin{align*}
\delta_\lambda: \bigotimes\limits_{i=1}^n \left( \bigwedge^{\lambda(i)} V_i \otimes \bigwedge^{[\beta_{i+1}-\lambda(i)]} V_{i+1}\right) \rightarrow \C[\Compl(\beta, r)]
\end{align*}
To be the composition of the map
\begin{align*}
\bigotimes\limits_{i=1}^n (\operatorname{id} \otimes \op_{\lambda(i)}): \bigotimes\limits_{i=1}^n \left( \bigwedge^{\lambda(i)} V_i \otimes \bigwedge^{[\beta_{i+1}-\lambda(i)]} V_{i+1}\right)  \rightarrow \bigotimes\limits_{i=1}^n \left( \bigwedge^{\lambda(i)} V_i \otimes \bigwedge^{\lambda(i)} V_{i+1}\right) 
\end{align*}
And the map 
\begin{align*}
\bigotimes\limits_{i=1}^n \delta_\lambda^{(i)}: \bigotimes\limits_{i=1}^n \left( \bigwedge^{\lambda(i)} V_i \otimes \bigwedge^{\lambda(i)} V_{i+1}\right)  \rightarrow \C[\Compl(\beta, r)].
\end{align*}
\end{definition}

\begin{example}
Suppose that $n=2$, $\beta=(2,5,3)$, and $r=(2,3)$.  Let $\lambda = ( (2,1), (2,2,1) )$.  Then $\delta_\lambda$ is a map
\begin{align*}
\bigwedge^{(2,1)} V_1 \otimes \bigwedge^{(2,2,2,1)} V_2 \otimes \bigwedge^{(2,2,1)} V_2 \otimes \bigwedge^{(1)} V_3 \rightarrow \C[\Compl(\beta, r)].
\end{align*}
As for an explicit calculation,
\begin{align*}
(\bigotimes\limits_{i=1}^n \delta_\lambda^{(i)})\circ (\bigotimes\limits_{i=1}^n (\operatorname{id} \otimes \op_{\lambda(i)}))\left(\begin{ytableau} 1 & 2 \\ 2 \end{ytableau}\otimes \begin{ytableau} 2 & 1\\ 3 & 2 \\ 4 & 4 \\ 5 \end{ytableau}\right) \otimes& \left(\begin{ytableau} 1 & 2 \\ 2 & 3 \\ 4 \end{ytableau} \otimes \begin{ytableau} 3 \end{ytableau}\right)\\
=\bigotimes\limits_{i=1}^n \delta_\lambda^{(i)}\left(\begin{ytableau} 1 & 2 \\ 2 \end{ytableau}\otimes \begin{ytableau} 3 & 1 \\ 5 \end{ytableau}\right) \otimes& \left( \begin{ytableau} 1 & 2 \\ 2 & 3 \\ 4 \end{ytableau} \otimes \begin{ytableau} 1 & 1 \\ 2 & 2 \\ 3  \end{ytableau}\right)\\
=\Delta_{12,35}^{(1)}\Delta_{2,1}^{(1)} \Delta_{124,123}^{(2)} \Delta_{23,12}^{(2)}&
\end{align*}
\end{example}

\begin{remark}
If $\lambda(i)$ has more than $r(i)$ parts for some $i$, then $\operatorname{image} \delta_\lambda=0$ on $\Compl(\beta, r)$ since one factor is the an $r(i)+l\times r(i)+l$ minor of $A_i$, and $\rk A_i \leq r(i)$ by definition of $\Compl(\beta, r)$.  
\end{remark}

\begin{definition}\label{def:lambdabracket}
Let $\Lambda_n(\beta, r)$ be the set of sequence of partitions $(\lambda(1),\dotsc, \lambda(n))$ such that $[\beta_{i+1}- \lambda(i)]_{\lambda(i)_1} \leq \lambda(i+1)'_1$.  I.e., the first column of $[\beta_{i+1}-\lambda(i)]$ is shorter than the last column of $\lambda(i+1)$.  If $\lambda \in \Lambda_n(\beta, r)$, denote by $[\lambda(i+1): \lambda(i)]$ the Young diagram with $$[\lambda(i+1):\lambda(i)]_j = [\beta_{i+1}-\lambda(i)]_j +\lambda(i+1)_j.$$  Diagrammatically, this is simply juxtaposing the diagrams $\lambda(i)$ and $[\beta_{i+1}-\lambda(i)]$, which is still a Young diagram by definition of $\Lambda_n(\beta, r)$.  We will also write $\lambda(1)=[\lambda(1):\lambda(0)]$ and $[\beta_{n+1}-\lambda(n)]=[\lambda(n+1):\lambda(n)]$ for the degenerate cases.

A filling of the diagrams $[\lambda(1):\lambda(0)], [\lambda(2):\lambda(1)], \dotsc, [\lambda(n):\lambda(n-1)], [\lambda(n+1):\lambda(n)]]$ is the same as a filling of all diagrams $\lambda(i)$ and $[\beta_{i+1}-\lambda(i)]$ for $i=1,\dotsc, n$ and is called a \emph{multitableau}.  
\end{definition}
\begin{example}
Again, take $n=2, \beta=(2,5,3), r=(2,3)$.  If $\lambda(1)=(2,1)$ and $\lambda(2)=(2,1)$, then $[\beta_2-\lambda(1)]=(2,2,2,1,0)$ and $[\beta_3-\lambda(2)]=(2,1,0)$.  Then 
\begin{align*}
[\lambda(2):\lambda(1)] = \ydiagram{4,3,2,1}
\end{align*}
The following is a multitableau of shape $\lambda$:
\begin{align*}
\begin{ytableau} 1 & 2 \\ 1 \end{ytableau}\qquad \begin{ytableau} 1 & 2 & *(lightgray)3 & *(lightgray)3 \\ 2 & 3 & *(lightgray)4\\ 3 & 5 \\ 5 \end{ytableau}\qquad \begin{ytableau} 2 & 3 \\ 3 \end{ytableau}
\end{align*}

The corresponding element in $\bigwedge^{\lambda(1)} V_1 \otimes \bigwedge^{[\beta_2-\lambda(1)]} V_2 \otimes \bigwedge^{\lambda(2)} V_2 \otimes \bigwedge^{[\beta_3-\lambda(2)]} V_3$ is 
\begin{align*}
\begin{ytableau} 1 & 2 \\ 1 \end{ytableau}\otimes \begin{ytableau} 1 & 2 \\ 2 & 3 \\ 3 & 5 \\ 5 \end{ytableau} \otimes \begin{ytableau} 3 & 3 \\ 4 \end{ytableau} \otimes \begin{ytableau} 2 & 3\\ 3 \end{ytableau} 
\end{align*}

\end{example} 

\begin{definition}
For two partitions $\lambda, \mu$, we define $\lambda \preceq \mu$ if $(\lambda'_1,\dotsc, \lambda'_{\lambda_1})\geq (\mu'_1,\dotsc, \mu'_{\mu_1})$.  Extend this to a partial order on $\Lambda_n(\beta, r)$ with $\lambda \preceq \mu$ if 
\begin{align*}
([\lambda(1):\lambda(0)], [\lambda(2):\lambda(1)], \dotsc, [\lambda(n+1):\lambda(n)], [\lambda(n+1):\lambda(n)])\\ \phantom{e} \preceq ([\mu(1):\mu(0)], [\mu(2):\mu(1)], \dotsc, [\mu(n+1): \mu(n)], [\mu(n+1):\mu(n)])
\end{align*}
 in the lexicographical order.
\end{definition}

\begin{definition}
Suppose that $\lambda$ and $\mu$ are partitions with $r_1, r_2$ parts, respectively, and $V$ is a vector space of dimension $n$.  Then we write $S_{(\lambda, -\mu)}V$ to denote the Schur module $S_{(\lambda_1,\dotsc, \lambda_{r_1}, 0,\dotsc, 0, -\mu_{r_2}, -\mu_{r_2-1}, \dotsc, -\mu_1)} V$, where we include $n-(r_1+r_2)$ zeros in the indexing vector.  Furthermore, we will write $-\mu$ for the vector $(-\mu_{r_2}, \dotsc, -\mu_2, -\mu_1)$.  
\end{definition}

\begin{proposition}[\cite{DCS}]
Denote by $\mathcal{F}_\lambda=\sum\limits_{\substack{\mu \in \Lambda_n(\beta, r)\\ \mu \preceq \lambda}} \operatorname{image } \delta_\mu$, and $\mathcal{F}_{\prec \lambda} = \sum\limits_{\substack{\mu\in \Lambda_n(\beta, r)\\ \mu \prec \lambda}} \operatorname{image } \delta_\mu$.  Then $\mathcal{F}_\lambda / \mathcal{F}_{\prec \lambda}$ has a basis given by \emph{standard} fillings of the diagrams $[\lambda(i+1):\lambda(i)]$ for $i=0,\dotsc, n$.  A collection of fillings of this sequence of diagrams is called a \emph{multitableau of shape $\lambda$}.  Furthermore, $$\mathcal{F}_\lambda/\mathcal{F}_{\prec \lambda} \cong \bigotimes\limits_{i=1}^n S_{(\lambda(i), 0, \dotsc, 0, -\lambda(i-1))} V_i.$$
\end{proposition}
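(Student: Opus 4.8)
The plan is to establish the proposition by the classical ``straightening law'' technique adapted to the variety of complexes, exactly as in DeConcini--Strickland, so most of the work is to set up the right comparison maps and reduce to already-known statements about the coordinate ring of a single Hom-space (i.e. the generic determinantal situation). First I would observe that $\mathcal F_\lambda$ and $\mathcal F_{\prec\lambda}$ are $\GL(\beta)$-stable subspaces of $\C[\Compl(\beta,r)]$ since each $\operatorname{image}\delta_\mu$ is $\GL(\beta)$-stable (the maps $\op_{\mu(i)}$ and $\delta^{(i)}_\mu$ are built from $\GL$-equivariant constructions, up to the sign twist which is absorbed in the source). Thus $\mathcal F_\lambda/\mathcal F_{\prec\lambda}$ is a $\GL(\beta)$-module, and the decomposition into $\Compl$ of Hom-spaces (the product structure from Section~\ref{sec:preliminaries}) reduces everything to the factor-by-factor situation: the diagrams $[\lambda(i+1):\lambda(i)]$ live in the matrix $A_i$, and a multitableau of shape $\lambda$ is a tensor over $i$ of fillings of these juxtaposed diagrams.

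The key steps, in order, are: (1) Show that the image $\operatorname{image}\delta_\mu$ is spanned by products of minors $\prod_j \Delta^{(i)}_{I(t_i)_j, I(t_{i+1})_j}$ indexed by arbitrary (not necessarily standard) fillings of the diagrams $[\mu(i+1):\mu(i)]$; this is immediate from the definition of $\delta_\mu$ once one unwinds $\op_{\mu(i)}$, the point being that the $\op$ map precisely converts a filling of $[\beta_{i+1}-\mu(i)]$ into the complementary columns, so that the concatenated diagram $[\mu(i+1):\mu(i)]$ carries the data of the pair of fillings $(t_i,t_{i+1})$. (2) Prove the \emph{straightening relations}: any product of minors associated to a non-standard filling can be rewritten modulo $\mathcal F_{\prec\lambda}$ as a $\C$-linear combination of products associated to standard fillings. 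This is the heart of the matter and is where the relations $A_{i+1}A_i=0$ enter: the usual Plücker/straightening relations for minors of a generic matrix produce, upon straightening a column violation that ``crosses'' from the $\mu(i)$ part into the $[\beta_{i+1}-\mu(i)]$ part, terms that either have strictly smaller shape in the $\preceq$ order (hence lie in $\mathcal F_{\prec\lambda}$) or vanish on $\Compl(\beta,r)$ because they factor through the composite $A_{i+1}A_i$. (3) Prove linear independence of the standard monomials in the quotient: this is the DeConcini--Procesi--Strickland standard monomial theorem for varieties of complexes, which I would cite from \cite{DCS} (and \cite{DRS}) rather than reprove; combined with (2) it gives that standard multitableaux of shape exactly $\lambda$ form a basis of $\mathcal F_\lambda/\mathcal F_{\prec\lambda}$. (4) Identify this basis with a weight basis of the tensor product of Schur modules: a standard filling of the juxtaposed diagram $[\lambda(i+1):\lambda(i)]$ with entries in $[\beta_i]$ on the $\lambda(i)$-part and in $[\beta_{i+1}]$ on the $[\beta_{i+1}-\lambda(i)]$-part is exactly a semistandard tableau computing the $\GL(V_i)$-character of $S_{(\lambda(i),0,\dots,0,-\lambda(i-1))}V_i$ — here one uses that $S_{(\lambda,-\mu)}V$ has a basis of tableaux with ``positive'' rows filled from $[\dim V]$ and ``negative'' rows encoded via the complement, which is precisely the role of $\op_{\lambda(i-1)}$ and the definition of $[\dim V_i - \lambda(i-1)]$. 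Matching the $\GL(\beta)=\prod_i\GL(V_i)$-characters on both sides, together with the fact that $\mathcal F_\lambda/\mathcal F_{\prec\lambda}$ is (by step (1)) a quotient of $\bigotimes_i \bigl(\bigwedge^{\lambda(i)}V_i\otimes\bigwedge^{[\beta_{i+1}-\lambda(i)]}V_{i+1}\bigr)$ and hence a sum of the $S_{(\lambda(i),0,\dots,-\lambda(i-1))}V_i$ with multiplicity at most one, forces the isomorphism.

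The main obstacle I expect is step (2), the straightening argument, and within it the bookkeeping of \emph{which} lower terms arise when one straightens across the ``seam'' between $\lambda(i)$ and $[\beta_{i+1}-\lambda(i)]$. One must verify carefully that the $\preceq$ order on $\Lambda_n(\beta,r)$ (defined lexicographically on the sequence of concatenated diagrams) is the correct order making the Plücker relations triangular, and that every relation-term that is \emph{not} of strictly smaller shape genuinely lies in the ideal of $\Compl(\beta,r)$ via a $A_{i+1}A_i=0$ factorization — this is where the ``complex'' condition, as opposed to working on the full product of Hom-spaces, is essential. A secondary subtlety is checking that the $\op_\lambda$ sign normalization makes the identification in step (4) $\GL$-equivariant on the nose rather than merely up to a scalar, but this is routine once the signs $\sgn(I(t')_j, I(t)_{\lambda_1-j+1})$ are tracked through. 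Since both (2) and (3) are precisely the content of \cite{DCS}, I would present this proposition largely as a citation with the product-over-$i$ reduction and the Schur-module identification (step (4)) spelled out, as that is the only genuinely new wrinkle in the present setting.
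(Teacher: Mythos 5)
Your proposal follows essentially the same route the paper takes: the paper records this proposition as a citation to DeConcini--Strickland and, in the remark that follows, sketches the argument in one sentence — namely that $\delta_\lambda(t_\lambda)$ for a nonstandard multitableau $t_\lambda$ can be rewritten, modulo $\mathcal{F}_{\prec\lambda}$, as a linear combination of standard multitableaux of the same shape. Your steps (1)--(3) expand that sentence into the standard-monomial argument (spanning via straightening, independence from \cite{DCS}), and step (4) supplies the $\GL(\beta)$-character comparison to the Schur modules, all of which is consistent with what the cited reference actually proves and with the reduction the paper is implicitly relying on. So the approach is correct and not a genuine departure.

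One bookkeeping slip worth flagging in step (4): the juxtaposed diagram $[\lambda(i+1):\lambda(i)]$ is attached to the vertex $i+1$, and \emph{all} of its entries lie in $[\beta_{i+1}]$ (the $\lambda(i+1)$ columns record column-indices of $A_{i+1}$, while the $[\beta_{i+1}-\lambda(i)]$ columns record, via the complement $\op_{\lambda(i)}$, the row-indices of $A_i$), not ``entries in $[\beta_i]$ on the $\lambda(i)$-part and in $[\beta_{i+1}]$ on the other part'' as you wrote. Correspondingly, the Schur module it contributes is $S_{(\lambda(i+1),0,\dots,0,-\lambda(i))}V_{i+1}$, which is the factor indexed by $i+1$ in the tensor product. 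Shifting the index makes the character identification go through exactly as you intend; the error is purely in the indexing of which $V$ carries which filling, and does not affect the validity of the straightening/standard-monomial strategy.
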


The above proposition is proven by showing that if $t_\lambda$ is a multitableau of shape $\lambda$, then $\delta_\lambda(t_\lambda)$ can be written, modulo terms in $\mathcal{F}_{\prec \lambda}$, as a linear combination of standard multitableau of shape $\lambda$.  

\begin{definition} The \emph{content} of a multitableau $t$ of shape $\lambda$ is the sequence of vectors $(\kappa^1,\dotsc, \kappa^{n+1})$ where 
\begin{align*}
\kappa^i_j &= \#\{\textrm{ boxes in } [\lambda(i):\lambda(i-1)] \textrm{ that are filled with the integer } j\}\\
\end{align*}
\end{definition}

\begin{corollary}[\cite{DCS}]
Suppose that $t$ is a non-standard multitableau of shape $\lambda$.  Then
\begin{align*}
\delta_\lambda(t) &= s(t) + y(t)
\end{align*}
where $s(t)$ is a linear combination of standard multitableaux of the same content as $t$, and $y(t)\in \mathcal{F}_{\prec \lambda}$.
\end{corollary}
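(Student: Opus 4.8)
The plan is to reduce the corollary to the straightening procedure already invoked in the preceding proposition, keeping careful track of content. Recall that by the preceding proposition, if $t$ is any multitableau of shape $\lambda$, then $\delta_\lambda(t)$ can be rewritten modulo $\mathcal{F}_{\prec\lambda}$ as a linear combination of standard multitableaux of shape $\lambda$; thus already $\delta_\lambda(t) = s(t) + y(t)$ with $s(t)$ a linear combination of standard multitableaux of shape $\lambda$ and $y(t)\in\mathcal{F}_{\prec\lambda}$. What remains is to show $s(t)$ can be taken to have the same content as $t$. First I would isolate the combinatorial straightening step: the passage from a non-standard multitableau to a sum of standard ones is carried out one violation at a time, using the Pl\"ucker (Garnir/shuffle) relations inside each factor $\bigwedge^{\lambda(i)}V_i$ and $\bigwedge^{[\beta_{i+1}-\lambda(i)]}V_{i+1}$, together with the Laplace-type identities that push exchanges across the $\delta^{(i)}_\lambda$ and $\op_{\lambda(i)}$ maps. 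The key observation is that every such relation is \emph{multihomogeneous}: a Pl\"ucker relation rewrites a product of minors as a signed sum of products of minors obtained by permuting a fixed multiset of row (resp. column) indices among the columns of the diagram, so the multiset of entries appearing in each diagram $[\lambda(i):\lambda(i-1)]$ is unchanged. Hence every term produced has the same content as $t$.

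Second I would make the bookkeeping precise. The content $(\kappa^1,\dots,\kappa^{n+1})$ records, for each $i$, how many boxes of the juxtaposed diagram $[\lambda(i):\lambda(i-1)]$ carry each integer $j$; equivalently it records the $\GL(\beta_i)$-weight of the corresponding tensor factor. The maps $\delta^{(i)}_\lambda$ and $\op_{\lambda(i)}$ are $\GL(\beta)$-equivariant up to the fixed twists already built into their definitions, so $\delta_\lambda$ sends a weight vector to a sum of functions of that same weight. Since standard multitableaux of shape $\lambda$ form a basis of $\mathcal{F}_\lambda/\mathcal{F}_{\prec\lambda}$ (preceding proposition) and are weight vectors for the diagonal torus, the image $\delta_\lambda(t) \bmod \mathcal{F}_{\prec\lambda}$ lies in a single weight space, namely the one cut out by the content of $t$; therefore $s(t)$ is forced to be a combination of standard multitableaux of that content, which is exactly the claim. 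The term $y(t) \in \mathcal{F}_{\prec\lambda}$ is whatever correction is needed and need not be controlled beyond its membership in $\mathcal{F}_{\prec\lambda}$.

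The main obstacle I anticipate is verifying that the straightening relations really are content-preserving once the maps $\op_{\lambda(i)}$ are interposed, since $\op_{\lambda(i)}$ replaces each column set $I$ by its complement $\{1,\dots,\dim V_{i+1}\}\setminus I$. One must check that the complementation is performed uniformly on all columns of $[\beta_{i+1}-\lambda(i)]$, so that a Pl\"ucker relation among the complemented columns corresponds to a Pl\"ucker relation among the original columns with the \emph{same} total multiset of indices in $V_{i+1}$ — this is where the degenerate conventions $\lambda(1)=[\lambda(1):\lambda(0)]$ and $[\beta_{n+1}-\lambda(n)]=[\lambda(n+1):\lambda(n)]$ and the definition of content via the juxtaposed diagrams must be matched carefully. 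Once that compatibility is in hand, the corollary follows formally from the proposition by the weight-space argument above.
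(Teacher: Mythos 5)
Your proof is correct. Note first that the paper itself gives no argument for this corollary --- it is cited directly from DeConcini--Strickland \cite{DCS} --- so there is no ``paper's proof'' to compare against; yours stands alone. Of the two strategies you offer, the second (the $\GL(\beta)$-equivariance / weight-space argument) is the cleaner and more robust one: it bypasses having to describe precisely which Pl\"ucker-type identities among minors implement the straightening, since one only needs to know (i) that each $\delta_\lambda(t)$ is a torus weight vector in $\C[\Compl(\beta,r)]$, (ii) that the standard multitableaux across \emph{all} shapes form a weight-vector basis of $\C[\Compl(\beta,r)]$ (so in particular $\mathcal{F}_{\prec\lambda}$ is torus-stable), and (iii) that for fixed shape $\lambda$, weight determines content. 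On (iii) one small clarification is worth making: because of the complementation in $\op_{\lambda(i)}$, the torus weight of $\delta_\lambda(t)$ at $V_{i+1}$ is $\lambda(i)_1\cdot(1,\dots,1)-\kappa^{i+1}$ rather than $\kappa^{i+1}$ itself, i.e.\ the content shows up shifted by a $\lambda$-dependent constant. This is exactly the ``fixed twist'' you allude to, and since the shift is the same for every multitableau of shape $\lambda$, the weight-to-content correspondence is bijective for fixed $\lambda$, which is all your argument needs. Your first argument (multihomogeneity of the straightening relations, which is essentially the route DCS take) is also sound, though your phrasing ``Pl\"ucker relations inside each factor $\bigwedge^{\lambda(i)}V_i$'' is slightly misleading --- the exterior powers themselves carry no relations; the straightening identities live among products of minors in $\C[\Compl(\beta,r)]$ --- but the relevant property, that these identities shuffle a fixed multiset of row and column indices, is correct and gives content-preservation. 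Your worry about $\op_{\lambda(i)}$ is resolved exactly as you suggest: complementation is applied uniformly in $[\beta_{i+1}]$ to every column, so preserving the multiset of complemented column sets is equivalent to preserving the multiset of original ones.
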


\begin{proposition}[\cite{DCS}]
$\C[\Compl(\beta, r)] =\bigcup\limits_{\lambda \in \Lambda_n(\beta, r)} \mathcal{F}_\lambda$.  
\end{proposition}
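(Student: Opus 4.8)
The plan is to prove the two inclusions, the containment of $\C[\Compl(\beta,r)]$ in the union being the only substantive one. First note that $\bigcup_{\lambda}\mathcal{F}_\lambda\subseteq\C[\Compl(\beta,r)]$ is immediate: each $\delta_\mu$ is by construction a map \emph{into} $\C[\Compl(\beta,r)]$, so $\operatorname{image}\delta_\mu$, hence each $\mathcal{F}_\lambda$, hence the union, lies there. It is also worth recording that this union is in fact a linear subspace: if $\lambda\preceq\lambda'$ in $\Lambda_n(\beta,r)$ then $\mathcal{F}_\lambda\subseteq\mathcal{F}_{\lambda'}$ straight from the definition, so the family $\{\mathcal{F}_\lambda\}_{\lambda\in\Lambda_n(\beta,r)}$ is directed under inclusion and its union is closed under addition.

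For the reverse inclusion I would first produce a spanning set of $\C[\Compl(\beta,r)]$ consisting of products of minors. Since $\Compl(\beta,r)$ is a closed subvariety of $\bigoplus_{i\in[n]}\Hom(V_i,V_{i+1})$, its coordinate ring is a quotient of the polynomial ring on the matrix entries of the $A_i$; and that polynomial ring, being a tensor product over $i$ of the coordinate rings $\C[\Hom(V_i,V_{i+1})]$, is spanned over $\C$ by products $\prod_{i\in[n]}P_i$ in which each $P_i$ is a product of minors of the generic matrix $A_i$ (the Cauchy formula, i.e.\ the fact that bideterminants span). Passing to the quotient, $\C[\Compl(\beta,r)]$ is spanned by the images of such products; and since every $s\times s$ minor of $A_i$ with $s>r_i$ restricts to $0$ on $\Compl(\beta,r)$, we may assume all minors occurring in $P_i$ have size at most $r_i$.

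The next step is to recognize each such product as a value of some $\delta_\mu$ and then straighten it. Ordering the minors of $P_i$ so their sizes decrease, let $\mu(i)$ be the partition having these sizes as its column lengths, and use that $\op_{\mu(i)}$ is a bijection to encode the row indices of the minors of $A_i$ as a filling of $[\beta_{i+1}-\mu(i)]$; then $\prod_{i}P_i=\pm\,\delta_\mu\big(\bigotimes_i(t_i\otimes u_{i+1})\big)$ for $\mu=(\mu(1),\dots,\mu(n))$, where neither $\mu$ need lie in $\Lambda_n(\beta,r)$ nor the multitableau be standard. Now invoke the straightening law: by the Corollary — together with the Plücker-type relations among minors of $A_i$ and of $A_{i+1}$ forced by $A_{i+1}A_i=0$ — one rewrites $\delta_\mu(t)$, modulo terms of strictly lower order, as a $\C$-linear combination of $\delta_\lambda(t_\lambda)$ with $\lambda\in\Lambda_n(\beta,r)$ and $t_\lambda$ a standard multitableau of shape $\lambda$; iterating (which terminates because the total degree is fixed and the order is well-founded on shapes of that degree) expresses $\prod_iP_i$ as such a combination. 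Each summand $\delta_\lambda(t_\lambda)$ lies in $\mathcal{F}_\lambda$, so $\prod_iP_i$ lies in $\mathcal{F}_{\lambda_0}$ for any common $\preceq$-upper bound $\lambda_0\in\Lambda_n(\beta,r)$ of the finitely many shapes that appear, hence in $\bigcup_\lambda\mathcal{F}_\lambda$. Combined with the spanning statement and the fact that the union is a subspace, this yields $\C[\Compl(\beta,r)]\subseteq\bigcup_\lambda\mathcal{F}_\lambda$.

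The main obstacle is the straightening step. One must be certain that the algorithm bottoms out at shapes that genuinely lie in $\Lambda_n(\beta,r)$, and this is exactly where the relations coming from the complex condition $A_{i+1}A_i=0$ — rather than the ordinary Plücker relations for a single generic matrix — must be used; this is essentially the standard-monomial basis theorem of DeConcini and Strickland, so in a complete write-up I would either cite \cite{DCS} for this reduction or reproduce the key straightening moves and the termination argument. The remaining ingredients — the Cauchy spanning statement, the sign and bijectivity bookkeeping with the maps $\op_{\lambda(i)}$, and the directedness of $\Lambda_n(\beta,r)$ under $\preceq$ — are routine.
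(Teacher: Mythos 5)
The paper states this proposition without proof, simply citing \cite{DCS}. Your reconstruction is a correct outline of the DeConcini--Strickland argument: the forward inclusion is immediate, and the reverse follows from the Cauchy/bideterminant spanning of $\C[\Compl(\beta,r)]$ by products of minors of the $A_i$ (with minors of size exceeding $r_i$ vanishing on the component), the bijectivity of $\op_{\mu(i)}$ to realize any such product as a $\delta_\mu$-value, directedness of the $\{\mathcal{F}_\lambda\}$ to make the union a subspace, and the straightening law. You correctly isolate the straightening step --- which must use the complex relations $A_{i+1}A_i=0$, not merely the ordinary Pl\"ucker relations for a single generic matrix, in order to land among shapes $\lambda\in\Lambda_n(\beta,r)$ --- as the genuinely nontrivial ingredient, and you defer it to \cite{DCS} exactly as the paper does.
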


\begin{definition}\label{def:partitionsum}
Suppose that $\lambda, \mu \in \Lambda_n(\beta, r)$.  Define by $\lambda+\mu$ the sequence of diagrams with $(\lambda+\mu)(i)_j = \lambda(i)_j + \mu(i)_j.$
\end{definition}

\begin{proposition}\label{prop:semigroupcomplex}
Suppose that $t_\lambda$ and $t_\mu$ are multitableaux of shapes $\lambda$ and $\mu$, respectively.  Then $$\delta_\lambda(t_\lambda) \cdot \delta_\mu(t_\mu) \in \mathcal{F}_{\lambda+\mu}.$$
\end{proposition}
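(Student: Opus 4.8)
The plan is to produce from $t_\lambda$ and $t_\mu$ a single multitableau $t$ of shape $\lambda+\mu$ with $\delta_{\lambda+\mu}(t)=\pm\,\delta_\lambda(t_\lambda)\cdot\delta_\mu(t_\mu)$; since $\operatorname{image}\delta_{\lambda+\mu}\subseteq\mathcal{F}_{\lambda+\mu}$ (immediately if $\lambda+\mu\in\Lambda_n(\beta,r)$, and after one round of straightening otherwise), this yields the claim. Equivalently, one is checking that $\operatorname{image}\delta_\lambda\cdot\operatorname{image}\delta_\mu=\operatorname{image}\delta_{\lambda+\mu}$. To set up: unwinding the definition of $\delta_\lambda$, up to the sign contributed by the maps $\op_{\lambda(i)}$ one has
\[
\delta_\lambda(t_\lambda)=\pm\prod_{i=1}^{n}\ \prod_{j=1}^{\lambda(i)_1}\Delta^{(i)}_{I(u_i)_j,\,I(w'_{i+1})_j},
\]
where $u_i$ and $w_{i+1}$ are the fillings of $\lambda(i)$ and of $[\beta_{i+1}-\lambda(i)]$ carried by $t_\lambda$, $w'_{i+1}=\op_{\lambda(i)}(w_{i+1})$, and the $j$-th factor attached to $i$ is a $\lambda(i)'_j\times\lambda(i)'_j$ minor of $A_i$. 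Since the several parts of a multitableau are independent, $\operatorname{image}\delta_\lambda$ is precisely the linear span of the products $\prod_i\big(\text{product of minors of }A_i\text{ whose sizes form the multiset of column lengths of }\lambda(i)\big)$, and similarly for $\mu$.

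The multitableau $t$ is built from three elementary facts about diagrams. (i) The conjugate of a rowwise sum of partitions is the union of the conjugates, so $(\lambda(i)+\mu(i))'=\lambda(i)'\cup\mu(i)'$ as multisets; hence the columns of $(\lambda+\mu)(i)=\lambda(i)+\mu(i)$, with multiplicities, are those of $\lambda(i)$ together with those of $\mu(i)$. (ii) The diagram $[\,p-\nu\,]$ is an affine function of $\nu$, so $[\beta_{i+1}-(\lambda+\mu)(i)]=[\beta_{i+1}-\lambda(i)]+[\beta_{i+1}-\mu(i)]$ rowwise, and again its column lengths are the union of those of the two summands. (iii) For each diagram $\nu$, $\op_\nu$ is a signed bijection from the column-increasing fillings of $[\beta_{i+1}-\nu]$ onto those of $\nu$ with entries in $[\beta_{i+1}]$, complementing and reversing columns, so the ``row sets'' occurring in the minors may be prescribed arbitrarily. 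Now fix, for each $i$, a size-preserving bijection between the columns of $(\lambda+\mu)(i)$ and the disjoint union of the columns of $\lambda(i)$ and of $\mu(i)$; let the $(\lambda+\mu)(i)$-part of $t$ be the column-increasing filling whose $j'$-th column-set is the one matched to $j'$ (a column-increasing filling carries no constraint between distinct columns), and, using (ii) and (iii), choose the $[\beta_{i+1}-(\lambda+\mu)(i)]$-part of $t$ so that the $j'$-th column-set of its $\op$-image is the correspondingly matched one among the column-sets of $(w^\lambda_{i+1})'$ and $(w^\mu_{i+1})'$. Because a minor depends only on its row and column \emph{sets} and multiplication is commutative, $\delta_{\lambda+\mu}(t)$ equals, up to sign, the product of all minors occurring in $\delta_\lambda(t_\lambda)$ together with all those occurring in $\delta_\mu(t_\mu)$, that is, $\pm\,\delta_\lambda(t_\lambda)\cdot\delta_\mu(t_\mu)$.

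It remains to see $\delta_{\lambda+\mu}(t)\in\mathcal{F}_{\lambda+\mu}$. If $\lambda+\mu\in\Lambda_n(\beta,r)$ this is immediate from $\mathcal{F}_{\lambda+\mu}=\sum_{\nu\preceq\lambda+\mu}\operatorname{image}\delta_\nu$. In general one runs the straightening procedure of \cite{DCS} --- repeated use of the Pl\"ucker-type relations among the minors of a single $A_i$ together with the relations $A_{i+1}A_i=0$, as in the Corollary above --- on $\delta_{\lambda+\mu}(t)$, rewriting it as a linear combination of standard multitableaux, each of some shape $\nu\in\Lambda_n(\beta,r)$ with $\nu\preceq\lambda+\mu$, hence an element of $\mathcal{F}_{\lambda+\mu}$. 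I expect this last step to be the only genuine obstacle: one must verify that straightening $\delta_{\lambda+\mu}(t)$ cannot produce a shape $\nu\not\preceq\lambda+\mu$ --- equivalently, that the column-length data $\lambda(i)'\cup\mu(i)'$ bounds the attainable shapes from above in the order $\preceq$ --- whereas the construction of $t$ is routine bookkeeping with diagrams.
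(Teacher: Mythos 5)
Your argument is correct and is essentially the paper's: you build a multitableau $t$ of shape $\lambda+\mu$ whose $\delta_{\lambda+\mu}$-image is $\pm\,\delta_\lambda(t_\lambda)\delta_\mu(t_\mu)$ by splitting the columns of each $(\lambda+\mu)(i)$ (and of each $[\beta_{i+1}-(\lambda+\mu)(i)]$, via $\op$) into those inherited from $\lambda$ and those from $\mu$. The paper streamlines the bookkeeping by reducing to the case where $\mu$ is a single column at a single index $i$, so that $\delta_\mu(t_\mu)$ is one minor $\Delta^{(i)}_{I,J}$, and then absorbing columns one at a time; your facts (i)--(iii) do the full absorption in one step. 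Your worry in the last paragraph is not a real obstacle: Definition~\ref{def:partitionsum} presupposes $\lambda+\mu\in\Lambda_n(\beta,r)$ (the constraints --- at most $r_i$ parts at each index, and the juxtaposition condition defining $\Lambda_n(\beta,r)$ --- are preserved under rowwise sum), so $\operatorname{image}\delta_{\lambda+\mu}\subseteq\mathcal{F}_{\lambda+\mu}$ holds immediately from the definition of $\mathcal{F}_{\lambda+\mu}$ as $\sum_{\nu\preceq\lambda+\mu}\operatorname{image}\delta_\nu$; no straightening is needed for this inclusion. (Straightening only enters when one wants a \emph{standard} basis, as in the corollaries that follow.)
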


\begin{proof}
It suffices to show this when $\mu$ consists of a single column, i.e., $\mu(i)=(\overbrace{1,1,\dotsc, 1}^j, 0, 0,\dotsc)$ for some $i$ and $\mu(j)=0$ otherwise.  Thus, $\delta_\lambda(t_\mu) = \Delta_{I, J}^{(i)}$ for some sets $I\subset \{1,\dotsc, \beta_i\}$, $J\subset \{1, \dotsc, \beta_{i+1}\}$.  Therefore, $\delta_\lambda(t_\lambda)\cdot \delta_\mu(t_\mu) = \delta_\lambda(t_\lambda) \cdot \Delta_{I, J}^{(i)}$.  Now  notice that $\lambda+\mu$ is the sequence diagrams which is the same as $\lambda$ except for $(\lambda+\mu)(i)$ which has an extra column of height $j$.  Take multitableau of shape $(\lambda+\mu)$ so that all entries not corresponding to the extra column are the same as in the filling $t_\lambda$, and all entries in the columns corresponding to the extra column are taken from $t_\mu$.  Denoting by $t_{\lambda+\mu}$ this filling, we have that $\delta_{\lambda+\mu}(t_{\lambda+\mu}) = \delta_\lambda (t_\lambda) \cdot \delta_\mu (t_\mu)$.  In short, each column of the sequence $\mu$ can be absorbed into $\lambda$ until the result is the sequence $\lambda + \mu$.   
\end{proof}

\begin{corollary}[\cite{DCS}]
The set $\{\mathcal{F}_\lambda\}_{\lambda \in \Lambda}$ gives a filtration of $\C[\Compl(\beta, r)]$ and the associated graded algebra is 
\begin{align*}
\gr_\Lambda \left(\C[\Compl(\beta, r)]\right) & = \bigoplus\limits_{\lambda \in \Lambda} \bigotimes\limits_{i=1}^n S_{(\lambda(i), 0, 0, \dotsc, 0, -\lambda(i-1))} V_i.
\end{align*}
(For the definition of the Schur modules $S_\lambda V$, we refer to \cite{W}[Chapter 2.1].)
\end{corollary}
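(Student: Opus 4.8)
The plan is to deduce the corollary from the three preceding results of \cite{DCS} — the standard-monomial description of $\mathcal{F}_\lambda/\mathcal{F}_{\prec\lambda}$ with its $\GL(\beta)$-equivariant identification with $\bigotimes_i S_{(\lambda(i),0,\dots,0,-\lambda(i-1))}V_i$, the straightening corollary writing a non-standard $\delta_\lambda(t)$ as $s(t)+y(t)$ with $y(t)\in\mathcal{F}_{\prec\lambda}$, and the exhaustiveness $\C[\Compl(\beta,r)]=\bigcup_\lambda\mathcal{F}_\lambda$ — together with the multiplicativity Proposition \ref{prop:semigroupcomplex}. Once $\{\mathcal{F}_\lambda\}_{\lambda\in\Lambda}$ is checked to be a multiplicative filtration of $\C[\Compl(\beta,r)]$ indexed by the ordered commutative monoid $(\Lambda_n(\beta,r),+,\preceq)$ of Definition \ref{def:partitionsum}, the rest is bookkeeping.

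First I would verify the filtration axioms. Monotonicity $\lambda\preceq\mu\Rightarrow\mathcal{F}_\lambda\subseteq\mathcal{F}_\mu$ is immediate from $\mathcal{F}_\lambda=\sum_{\nu\preceq\lambda}\operatorname{image}\delta_\nu$ and transitivity of $\preceq$; the same observation gives $\sum_{\mu\prec\lambda}\mathcal{F}_\mu=\mathcal{F}_{\prec\lambda}$, so $\gr_\Lambda\C[\Compl(\beta,r)]:=\bigoplus_{\lambda\in\Lambda}\mathcal{F}_\lambda/\mathcal{F}_{\prec\lambda}$ really is the associated graded of this filtration, and exhaustiveness is the cited proposition verbatim. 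For multiplicativity, $\mathcal{F}_\lambda$ and $\mathcal{F}_\mu$ are spanned by elements $\delta_\nu(t_\nu)$ with $\nu\preceq\lambda$ and $\delta_{\nu'}(t_{\nu'})$ with $\nu'\preceq\mu$, so it suffices to locate each product $\delta_\nu(t_\nu)\cdot\delta_{\nu'}(t_{\nu'})$, which Proposition \ref{prop:semigroupcomplex} places in $\mathcal{F}_{\nu+\nu'}$. One then needs $\nu+\nu'\in\Lambda_n(\beta,r)$ and $\nu+\nu'\preceq\lambda+\mu$, plus the strict version $\nu\prec\lambda\Rightarrow\nu+\nu'\prec\lambda+\mu$ required so that the product descends, $\mathcal{F}_{\prec\lambda}\cdot\mathcal{F}_\mu\subseteq\mathcal{F}_{\prec(\lambda+\mu)}$ (and symmetrically), making $\gr_\Lambda$ a $\Lambda$-graded algebra. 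This order-compatibility of $+$ is the step I expect to need care: $\preceq$ is not the entrywise order but the lexicographic order on the sequences of juxtaposed diagrams $[\lambda(i+1):\lambda(i)]$ of Definition \ref{def:lambdabracket}, compared via the dominance order on transpose column-length vectors, so one must track how adjoining one column to some $\lambda(i)$ alters both $[\lambda(i):\lambda(i-1)]$ and $[\lambda(i+1):\lambda(i)]$, and conclude the strict comparison by cancellation in the monoid of sequences of diagrams.

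With the filtration established, the corollary is a direct assembly: for each $\lambda$ the first cited \cite{DCS} proposition identifies $\mathcal{F}_\lambda/\mathcal{F}_{\prec\lambda}$, as a $\GL(\beta)$-module, with $\bigotimes_{i=1}^n S_{(\lambda(i),0,\dots,0,-\lambda(i-1))}V_i$, and summing over $\lambda\in\Lambda$ gives the displayed formula for $\gr_\Lambda\C[\Compl(\beta,r)]$. No separate dimension estimate enters: the straightening corollary is exactly what shows the standard multitableaux of shape $\lambda$ span $\mathcal{F}_\lambda$ modulo $\mathcal{F}_{\prec\lambda}$ — hence that the quotient is no larger than the stated Schur module — while exhaustiveness guarantees the graded pieces account for all of $\C[\Compl(\beta,r)]$.
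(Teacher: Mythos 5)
The paper offers no proof of this corollary — it is attributed to \cite{DCS} — but the preceding material (the standard-monomial/Schur-module description of $\mathcal{F}_\lambda/\mathcal{F}_{\prec\lambda}$, the straightening corollary, exhaustiveness, and Proposition~\ref{prop:semigroupcomplex}) is clearly arranged so that the corollary follows by exactly the assembly you give, and your write-up matches that intended deduction. You are also right to single out the strict order-compatibility of $+$ with the lexicographic order $\preceq$ (and closure of $\Lambda_n(\beta,r)$ under $+$) as the one non-formal point needed to make $\gr_\Lambda$ a genuine $\Lambda$-graded algebra; you flag it and sketch a cancellation argument rather than carrying it through, which is the only place your proof stops short of full detail, but this is consistent with the paper's own treatment, which simply cites \cite{DCS}.
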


We are now prepared to consider the variety $\Rep_{\C Q/I}(\beta)$ when $\C Q/I$ is a gentle string algebra.


\section{The Coordinate Rings of $\Rep_{\C Q/I}(\beta)$}\label{sec:coordinateringofrep}
\newcommand{\X}{\mathfrak{X}}
Let $\C Q/I$ be a gentle string algebra with $c:Q\rightarrow S$ the coloring of $Q$ such that $I=I_c$.  Let $\beta$ a dimension vector for $Q$.  We will determine the irreducible components of $\Rep_{\C Q/I}(\beta)$, and determine the rings $\SI_{Q, c}(\beta)_i$ via generators and relations.  This analysis will yield we find upper bounds for the degrees of the generators and relations viewed as polynomials in $\C[\Rep_{\C Q/I}(\beta)_i]$.  For brevity we will write $\Rep_{Q, c}(\beta)$ for the space $\Rep_{\C Q/I_c}(\beta)$, and $\SI_{Q, c}(\beta)$ for the subring of semi-invariants.

Define by $\X \subset Q_0 \times S$ the space of pairs with $(x, s) \in \X$ if and only if $s$ is a color passing through $x$, i.e., there is an arrow $a$ with $h(a)=x$ or $t(a)=x$ and $c(a)=s$.  For such a pair, we denote by $i(x, s)$ (resp. $o(x, s)$) the arrow of color $s$ whose head (resp. tail) is $x$.  Formally, we write $\emptyset$ if this arrow doesn't exist.  A vertex will be called \emph{lonely} if there is only one element $(x,s)\in \X$, and \emph{coupled} if there are two (notice that there can be at most $2$).

\begin{definition}
A map $r: Q_1 \rightarrow \N$ is called a \emph{rank sequence} if its restriction to each colored path is a rank sequence.  I.e., $r(i(x, s))+r(o(x,s)) \leq \beta_x$  for all $(x, s)\in \X$ (here we adopt the convention that $r(\emptyset) = 0$). The rank sequence $r$ is called \emph{maximal} if it is so under the coordinate-wise partial order.
\end{definition}

\begin{proposition}
Let $\Rep_{Q,c}(\beta, r) = \{M\in \Rep_{Q, c}(\beta) \mid \rk(M(a))\leq r(a) \ \forall a\in Q_1\}.$  This variety is irreducible if and only if $r$ is maximal under the coordinate-wise partial ordering.  Furthermore, for such rank sequence $r$, $\Rep_{Q, c}(\beta, r)$ is normal and Cohen-Macaulay with rational singularities..
\end{proposition}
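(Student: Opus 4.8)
The plan is to realize $\Rep_{Q,c}(\beta,r)$ as a product of varieties of complexes, one factor for each color in $S$, and then transport the conclusions of Propositions \ref{rank-sequence-proposition} and \ref{irr-cpts} together with the known geometric properties of the varieties of complexes. Recall from the discussion in Section \ref{sec:preliminaries} that $\Rep_{Q,c}(\beta)=\prod_{i=1}^s \Compl(\beta^{(i)})$, where for the color $c_i=a^{(i)}_{n_i}\dotsc a^{(i)}_1$ the dimension vector $\beta^{(i)}$ is the restriction of $\beta$ to the vertices visited by $c_i$. Under this identification, the rank condition $\rk(M(a))\le r(a)$ for all $a\in Q_1$ decouples across colors, since each arrow belongs to exactly one color; hence
\[
\Rep_{Q,c}(\beta,r)=\prod_{i=1}^s \Compl(\beta^{(i)}, r^{(i)}),
\]
where $r^{(i)}$ is the rank sequence $r$ restricted to the arrows of $c_i$ (padded with the boundary zeros). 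The crucial compatibility is that the notion of rank sequence for the colored algebra was \emph{defined} precisely so that $r$ restricts to a rank sequence on each colored path, and one checks immediately that $r$ is maximal for $(Q,c,\beta)$ in the coordinate-wise order if and only if each $r^{(i)}$ is a maximal rank sequence for $\beta^{(i)}$: the coordinate-wise partial order on maps $Q_1\to\N$ is literally the product of the partial orders on the individual colored paths, and the constraint polytopes are independent across colors.

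Given this, the equivalence is immediate in both directions. If every $r^{(i)}$ is maximal, then by Proposition \ref{irr-cpts} each $\Compl(\beta^{(i)},r^{(i)})$ is irreducible, and a finite product of irreducible varieties over the algebraically closed field $\C$ is irreducible, so $\Rep_{Q,c}(\beta,r)$ is irreducible. Conversely, if some $r^{(j)}$ is not maximal, then by Propositions \ref{rank-sequence-proposition} and \ref{irr-cpts} the variety $\Compl(\beta^{(j)},r^{(j)})$ is a union of at least two distinct irreducible components $\Compl(\beta^{(j)},s)$ for the maximal $s\succeq r^{(j)}$ with $s\ne r^{(j)}$ (such an $s$ exists and there are at least two whenever $r^{(j)}$ fails to be maximal and the containment relations force genuinely distinct closures, as recorded in Proposition \ref{irr-cpts}); taking the product with any point in the remaining factors exhibits $\Rep_{Q,c}(\beta,r)$ as reducible. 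For the normality, Cohen–Macaulayness, and rationality of singularities when $r$ is maximal, one again uses the product decomposition: these three properties are each stable under taking products of varieties over $\C$ (for Cohen–Macaulay and normal this is standard commutative algebra via the tensor product of the coordinate rings over the algebraically closed field; for rational singularities it follows from the Künneth-type behaviour of higher direct images under a product of resolutions, or from flat base change). So it suffices to know that each $\Compl(\beta^{(i)},r^{(i)})$ is normal, Cohen–Macaulay with rational singularities, which is exactly the classical theorem of DeConcini–Strickland \cite{DCS} (together with \cite{DRS}) for varieties of complexes at a maximal rank sequence; indeed the explicit ASL/straightening basis for $\C[\Compl(\beta,r)]$ recalled in Section \ref{sec:TheCoordinateRing} is one route to all three.

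The main obstacle is not any single deep input but rather making the bookkeeping of the decoupling precise: one must verify carefully that the map sending $r$ to $(r^{(1)},\dotsc,r^{(s)})$ is a poset isomorphism from rank sequences for $(Q,c,\beta)$ onto $\prod_i\{\text{rank sequences for }\beta^{(i)}\}$, and in particular that maximality is preserved in both directions — this uses acyclicity of $Q$ only through the earlier structural results, since here it is already built into the hypothesis that $\C Q/I_c$ is a gentle string algebra with the stated coloring. Once that dictionary is in place, everything reduces to the two cited propositions about $\Compl(\beta,r)$ and the stability of the geometric properties under finite products, both of which are routine.
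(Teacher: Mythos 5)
Your overall strategy---decompose $\Rep_{Q,c}(\beta,r)$ as a product of varieties of complexes by color and import DeConcini--Strickland---is exactly the paper's approach, and the ``if'' direction and the transport of normality, Cohen--Macaulayness and rational singularities across a product over $\C$ are handled correctly.

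However, your argument for the ``only if'' direction is wrong, and in a way that is worth pausing on. You claim that if some $r^{(j)}$ fails to be maximal then $\Compl(\beta^{(j)},r^{(j)})$ is a \emph{union of at least two distinct irreducible components}. This contradicts Proposition \ref{rank-sequence-proposition}, which shows that for \emph{every} rank sequence $r$ (maximal or not) the set $\Compl(\beta,r)$ is the closure of the single $\GL(\beta)$-orbit $\Compl^\circ(\beta,r)$, hence is always irreducible. So $\prod_s \Compl(\beta^{(s)},r^{(s)})$ is irreducible for every rank sequence $r$, and taken at face value the ``only if'' in the proposition cannot hold. What is actually true (and what the proposition is evidently intended to assert, and what the ``if and only if'' language implicitly tracks) is that $\Rep_{Q,c}(\beta,r)$ is an \emph{irreducible component of} $\Rep_{Q,c}(\beta)$ if and only if $r$ is maximal: if $r^{(j)}$ is not maximal one chooses a maximal $s^{(j)}\succ r^{(j)}$ and observes that $\Compl(\beta^{(j)},r^{(j)})\subsetneq\Compl(\beta^{(j)},s^{(j)})$ (the containment is proper since the open orbit $\Compl^\circ(\beta^{(j)},s^{(j)})$ lies in the larger set but not the smaller, by Proposition \ref{irr-cpts}), so $\Rep_{Q,c}(\beta,r)$ is a proper closed subvariety of the component $\Rep_{Q,c}(\beta,s)$ and thus is not itself a component. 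You should replace the false ``at least two components'' claim with this proper-containment argument and flag the ``irreducible'' vs.\ ``irreducible component'' discrepancy in the statement. The paper's own proof sidesteps the issue entirely (``This is clear once we show $\ldots$''), so you were not misled by a model to compare against, but the argument you supplied does not stand.
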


\begin{proof}
This is clear once we show that $\Rep_{Q, c}(\beta)$ is the product of varieties of complexes (the normality, Cohen-Macaulay and rational singularity properties are corollaries of \cite{DCS}).  More specifically, for each color $s\in S$, let $X_s$ be the set of vertices such that $(x,s)\in \X$, $A_s$ the set of arrows $a$ with $c(a)=s$, $\beta^s$ be the restriction of the dimension vector $\beta$ to the vertices $X_s$, and $r^s$ the restriction of the rank sequence $r$ to the arrows in $A_s$.  Then as varieties we have $$\prod\limits_{s\in S} \Compl_{\lvert X_s \rvert} (\beta^s, r^s)\cong \Rep_{Q, c}(\beta, r).$$  
Therefore $$\C [\Rep_{Q, c}(\beta, r)]\cong \bigotimes\limits_{s\in S} \C [\Compl_{\lvert X_s\rvert} (\beta^s, r^s)].$$
\end{proof}

We now describe a filtration on $\C [\Rep_{Q, c}(\beta, r)]$ by collections of Young Diagrams.

\begin{definition}
Let $\Lambda(Q, c, \beta, r)$ be the set of functions $\lambda: Q_1 \rightarrow \mathcal{P}$ (where $\mathcal{P}$ is the set of Young diagrams) such that $\lambda(a)$ has at most $r(a)$ non-zero parts (this is the generalization of $\Lambda_n(\beta, r)$ from section \ref{sec:TheCoordinateRing}).  We will simply write $\Lambda$ if the parameters are understood.  If $p_s=p_{m_s}\dotsc p_1$ is the full path of color $s$, then we write $\lambda^s$ for the sequence $\lambda(p_1), \dotsc, \lambda(p_{m_s})$.
\end{definition}

As a result of the previous section, for each $\lambda \in \Lambda$, we have the map 

\begin{align}\label{map:deltahat}
\hat{\delta}_\lambda: \bigotimes\limits_{s\in S} \left(\bigotimes\limits_{i=1}^{m_s} \bigwedge^{\lambda(p_i)} V_{t_{p_i}} \otimes \bigwedge^{[\beta_{h_{p_i}}-\lambda(p_i)]} V_{h_{p_i}}\right) \rightarrow \C [\Rep_{Q, c}(\beta,r)] \end{align} which is given by the product of the maps $\hat{\delta}_{\lambda^s}$.  It will be convenient to denote the domain of this map by $\bigwedge^\lambda V$.  Furthermore, we may extend the partial order of \ref{sec:TheCoordinateRing} to $\Lambda$ as follows: $$\lambda \preceq \mu$$ if and only if $$\lambda^s \preceq \mu^s$$ for each color $s\in S$, where $\preceq$ on colored sequences is given by that for the variety of complexes associated to that color.  Finally, we denote by $\mathcal{F}_\lambda = \sum\limits_{\mu\preceq \lambda} \operatorname{image}(\hat{\delta}_\mu),$ and $\mathcal{F}_{\prec \lambda} = \sum\limits_{\mu \prec \lambda} \operatorname{image}(\hat{\delta}_\mu)$.

\begin{proposition}
For $r$ a maximal rank sequence for $\beta,$ the collection $\{\mathcal{F}_\lambda\mid \lambda\in \Lambda(Q, c, \beta, r)\}$ is a filtration of $\Rep_{Q, c}(\beta, r)$ relative to the partial order just described.  Furthermore, $$\mathcal{F}_{\lambda} /\sum\limits_{\mu\prec \lambda} \mathcal{F}_\mu\cong \bigotimes\limits_{(x,s)\in \X} S_{\lambda(x,s)} V_x$$ where $\lambda(x,s) = (\lambda(o(x,s)),-\lambda(i(x,s)))$.  
\end{proposition}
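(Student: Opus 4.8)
The plan is to reduce the statement to the single-color case already established in Section~\ref{sec:TheCoordinateRing}, using the identification of $\Rep_{Q,c}(\beta,r)$ as a product of varieties of complexes. Recall from the previous proposition that, as a variety, $\Rep_{Q,c}(\beta,r)\cong\prod_{s\in S}\Compl_{|X_s|}(\beta^s,r^s)$, hence $\C[\Rep_{Q,c}(\beta,r)]\cong\bigotimes_{s\in S}\C[\Compl_{|X_s|}(\beta^s,r^s)]$ as $\GL(\beta)$-modules (where the $\GL(\beta_x)$-factors act diagonally across those colors $s$ for which $x\in X_s$). Under this isomorphism the map $\hat\delta_\lambda$ of \eqref{map:deltahat} is by construction the tensor product over $s\in S$ of the maps $\delta_{\lambda^s}$ for the individual varieties of complexes, and the partial order $\preceq$ on $\Lambda$ is the product of the partial orders $\preceq$ on each $\Lambda_{|X_s|}(\beta^s,r^s)$. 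Consequently $\mathcal{F}_\lambda=\bigotimes_{s\in S}\mathcal{F}_{\lambda^s}$ and the ``strictly smaller'' piece $\mathcal{F}_{\prec\lambda}=\sum_{\mu\prec\lambda}\operatorname{image}(\hat\delta_\mu)$ is the sum, over colors $s$, of $\mathcal{F}_{\prec\lambda^s}$ tensored with the full factors for the other colors.

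First I would make the product-order bookkeeping precise: for a product of filtered vector spaces $M=\bigotimes_s M^{(s)}$ filtered by posets $(\Lambda^{(s)},\preceq)$, with $M_{\lambda}=\bigotimes_s M^{(s)}_{\lambda^s}$ for $\lambda=(\lambda^s)_s\in\prod_s\Lambda^{(s)}$, one has the general associated-graded identity
\[
M_\lambda\Big/\sum_{\mu\prec\lambda}M_\mu\;\cong\;\bigotimes_{s\in S}\Big(M^{(s)}_{\lambda^s}\big/\textstyle\sum_{\nu\prec\lambda^s}M^{(s)}_{\nu}\Big),
\]
valid because $\prec$ on the product is the product order (so $\mu\prec\lambda$ iff $\mu^s\preceq\lambda^s$ for all $s$ with at least one strict). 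Then I would apply the proposition of DeConcini--Strickland recalled in Section~\ref{sec:TheCoordinateRing}, which identifies the corresponding quotient for a single variety of complexes: $\mathcal{F}_{\lambda^s}/\mathcal{F}_{\prec\lambda^s}\cong\bigotimes_{i=1}^{m_s}S_{(\lambda(p_i),0,\dots,0,-\lambda(p_{i-1}))}V_{t_{p_i}}$, where the tensor factors are indexed by the vertices on the colored path $p_s$. Combining the two displays, $\mathcal{F}_\lambda/\mathcal{F}_{\prec\lambda}$ becomes a tensor product, over all pairs $(x,s)$ with $x$ on the path of color $s$, i.e.\ over $(x,s)\in\X$, of Schur modules $S_{(\lambda(o(x,s)),-\lambda(i(x,s)))}V_x$, with the convention $\lambda(\emptyset)=\emptyset$ handling the endpoints of each colored path. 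This is exactly $\bigotimes_{(x,s)\in\X}S_{\lambda(x,s)}V_x$.

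That $\{\mathcal{F}_\lambda\}$ is in fact a filtration (exhausting $\C[\Rep_{Q,c}(\beta,r)]$, with $\mathcal{F}_\mu\subseteq\mathcal{F}_\lambda$ for $\mu\preceq\lambda$) follows from the corresponding statements for each $\Compl_{|X_s|}(\beta^s,r^s)$ recalled earlier, together with the fact that a tensor product of exhaustive, poset-indexed filtrations is again one for the product poset; exhaustion uses the single-color statement $\C[\Compl(\beta,r)]=\bigcup_\lambda\mathcal{F}_\lambda$ and distributivity of tensor product over the directed unions. The main obstacle I anticipate is bookkeeping rather than conceptual: one must check carefully that the indexing of Schur-module tensor factors by vertices-on-colored-paths in the single-color statement glues, across colors, to an indexing by $\X$, and in particular that at a coupled vertex $x$ the two pairs $(x,s_1),(x,s_2)\in\X$ contribute two separate Schur factors for the \emph{same} space $V_x$ (which is correct, since the two colors give two distinct, independently-ranked maps through $x$), and that the degenerate conventions $\lambda(1)=[\lambda(1):\lambda(0)]$ and $[\beta_{n+1}-\lambda(n)]=[\lambda(n+1):\lambda(n)]$ from Definition~\ref{def:lambdabracket} correspond precisely to the $r(\emptyset)=0$, $\lambda(\emptyset)=\emptyset$ conventions at the source and sink of each colored path. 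Once this dictionary is fixed, the proof is a direct transport of the Section~\ref{sec:TheCoordinateRing} results through the product decomposition.
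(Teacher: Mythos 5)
Your proof is correct and takes essentially the same approach as the paper: reduce to the single-color varieties of complexes via the tensor-product decomposition of $\C[\Rep_{Q,c}(\beta,r)]$ and transport the DeConcini--Strickland filtration and its associated graded through the product. The paper's own proof is a single sentence asserting exactly this reduction; you have supplied the product-poset bookkeeping (in particular the identity $\mathcal{F}_{\prec\lambda}=\sum_{s}\mathcal{F}_{\prec\lambda^s}\otimes\bigotimes_{s'\neq s}\mathcal{F}_{\lambda^{s'}}$) that the paper leaves implicit.
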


\begin{proof} As the product of spaces that are filtered by sequences of partitions, $\C[\Rep_{Q, c}(\beta, r)$ is filtered by collections of sequences of partitions.  
\end{proof}

\begin{corollary}
For $Q, c, \beta, r$ as above, \begin{align}\label{ASSOCIATEDGRADED:BYX}\gr(\C[\Rep_{Q, c}(\beta, r)]) &\cong \bigoplus\limits_{\lambda\in \Lambda} \bigotimes\limits_{(x, s)\in \X} S_{\lambda(x, s)} V_x\\
& \cong \bigoplus\limits_{\lambda \in \Lambda} \bigotimes\limits_{x\in Q_0} S_{\lambda(x,s_1(x))} V_x \otimes S_{\lambda(x,s_2)}V_x.\end{align}
\end{corollary}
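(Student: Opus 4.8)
The final statement is a corollary asserting that the associated graded ring of $\C[\Rep_{Q,c}(\beta,r)]$ decomposes as a direct sum over $\lambda \in \Lambda$ of tensor products of Schur modules, indexed either by pairs $(x,s) \in \X$ or, upon regrouping, by vertices $x \in Q_0$ with the two colors through each vertex.

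\textbf{Proof proposal.} The plan is to deduce this immediately from the preceding proposition, which already identifies each filtration quotient $\mathcal{F}_\lambda / \sum_{\mu \prec \lambda} \mathcal{F}_\mu$ with $\bigotimes_{(x,s)\in \X} S_{\lambda(x,s)} V_x$. First I would recall that, since $\{\mathcal{F}_\lambda\}_{\lambda \in \Lambda}$ is a filtration of $\C[\Rep_{Q,c}(\beta,r)]$ indexed by the partially ordered set $\Lambda$, the associated graded ring is by definition the direct sum of these successive quotients, giving the first isomorphism in \eqref{ASSOCIATEDGRADED:BYX} verbatim from the proposition. Then I would obtain the second isomorphism purely by reindexing: the set $\X \subset Q_0 \times S$ consists of pairs $(x,s)$ where color $s$ passes through vertex $x$, and for each vertex $x$ there are at most two such colors (since $Q$ is gentle, at most two arrows meet $x$ at the head and at most two at the tail, and a coloring partitions arrows into paths). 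Writing $s_1(x), s_2(x)$ for these (at most two) colors through $x$—with the convention that $\lambda(\emptyset) = 0$ and $S_{(0,0)}V_x = \C$ handles lonely vertices—the product $\bigotimes_{(x,s)\in\X}$ regroups as $\bigotimes_{x\in Q_0}\left(S_{\lambda(x,s_1(x))}V_x \otimes S_{\lambda(x,s_2(x))}V_x\right)$.

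The only genuinely substantive point to verify is that this regrouping is legitimate and the indexing by vertices is well-posed: one must check that the Schur module $S_{\lambda(x,s)}V_x = S_{(\lambda(o(x,s)), -\lambda(i(x,s)))}V_x$ depends only on the data at $x$ and not on any choice, and that when $x$ is lonely the second factor is trivial. This follows from the definition of $i(x,s)$ and $o(x,s)$ and the $S_{(\lambda,-\mu)}V$ notation introduced earlier, since $\dim V_x = \beta_x \geq r(o(x,s)) + r(i(x,s))$ by the rank-sequence condition, so the indexing vector $(\lambda(o(x,s)), 0,\dots,0, -\lambda(i(x,s)))$ is a legitimate dominant weight for $\GL(\beta_x)$ and $S_{\lambda(x,s)}V_x$ is well-defined.

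\textbf{Main obstacle.} I do not expect a serious obstacle here—this corollary is essentially bookkeeping on top of the proposition. The one place demanding a little care is making the two-color regrouping precise for lonely versus coupled vertices and confirming the conventions ($\lambda(\emptyset)=0$, trivial Schur module) make the second line of \eqref{ASSOCIATEDGRADED:BYX} agree with the first on the nose. A half-sentence pointing to the definition of $\X$ and the convention $r(\emptyset)=0$ should suffice.
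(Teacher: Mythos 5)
Your proposal is correct and is essentially the same argument the paper intends: the corollary is stated without proof because it follows immediately from the preceding proposition (filtration quotients are $\bigotimes_{(x,s)\in\X} S_{\lambda(x,s)}V_x$) together with the definition of the associated graded as the direct sum of those quotients, and then the regrouping by vertices. Your remarks about lonely vertices, the conventions $\lambda(\emptyset)=0$ and $r(\emptyset)=0$, and well-posedness of $S_{(\lambda,-\mu)}V_x$ are exactly the bookkeeping the paper leaves implicit.
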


As a result of the above remarks, we can give a basis for $\C [\Rep_{Q, c}(\beta, r)]$ via standard multitableaux by generalizing the procedure described by DeConcini and Strickland in the case of the varieties of complexes.  The following definitions will serve as the notation necessary for such a generalization.

\begin{definition}\label{def:bracketnotation}
Let $\lambda\in \Lambda(Q, c, \beta, r)$.  For each element $(x,s)\in \X$, denote by $[\lambda_{x,s}]$ the partition with $[\lambda_{x,s}]_j = \lambda(o(x,s))_j + (\beta_x-\lambda(i(x,s)))_j$.  This can be viewed as adjoining the partitions $(\beta_x-\lambda(i(x,s))$  and $\lambda(o(x,s))$ left-to-right.  
\end{definition}
This is the natural generalization of the symbol $[\lambda(i):\lambda(i-1)]$ in section \ref{sec:TheCoordinateRing}, so we expect to build a basis from fillings of these diagrams.  

\begin{definition}
Let $\lambda \in \Lambda(Q, c,\beta, r)$.  A \emph{multitableau of shape $\lambda$} is a column-strictly-increasing filling of each of the diagrams $[\lambda_{x,s}]$ for $(x, s)\in \X$.  A multitableau is called \emph{standard} if each filling of each diagram is a standard filling.  The \emph{content} $\kappa$ of a filling of $\lambda$ is the collection of vectors $\kappa_{x,s}\in \N^{\beta_x}$ with $(\kappa_{x,s})_j=\#\{\textrm{ occurrences of } j \textrm{ in the filling of } [\lambda_{x,s}]\}$.
\end{definition}

Using the same conventions as in section \ref{sec:TheCoordinateRing}, we can see that $\bigwedge^\lambda V$ has basis given by multitableaux of shape $\lambda$.  In the subsequent section, we will determine explicit elements of $\bigwedge^\lambda V$ whose image under $\hat\delta_\lambda$ is a semi-invariant function.


\section{Semi-Invariant Functions in $\C [\Rep_{Q, c}(\beta, r)]$}\label{sec:SIonREP}
Fix a colored string algebra $(Q, c)$, a dimension vector $\beta$, and a maximal rank sequence $r$ for $\beta$.  We denote by $M_\lambda$ the term $\mathcal{F}_\lambda/\mathcal{F}_{\prec \lambda}$ for $\lambda \in \Lambda$.  With this notation, we may write $\gr(\C [\Rep_{Q, c}(\beta, r)]) \cong \bigoplus\limits_{\lambda \in \Lambda} M_\lambda.$  In the forthcoming, we will show that $\SI_{Q, c}(\beta, r)$ is isomorphic to a semigroup ring.  We do so by defining a basis $\{m_\lambda\}$ for $\SI_{Q, c}(\beta, r)$ and then exhibiting the multiplication on said basis.
\begin{definition} Let $\Lambda_{SI}(Q, c, \beta, r)$ be the set of elements $\lambda$ in $\Lambda(Q, c, \beta, r)$ such that $M_\lambda$ contains a semi-invariant.  As usual, we write $\Lambda_{SI}$ if the parameters are understood.  \end{definition}
\begin{proposition}\label{prop:cauchy}
Let $\lambda \in \Lambda$.  Then $\lambda \in \Lambda_{SI}$ if and only if there is a vector $\sigma(\lambda) \in \mathbb{Z}^{Q_0}$ such that for each $x\in Q_0$, we have 
\begin{align} 
\lambda(x,s_1)_i + \lambda(x,s_2)_{\beta_x+1-i} &= \sigma(\lambda)_x \qquad i=1,\dotsc, \beta_x,
\end{align}
(here if $x$ is a lonely vertex, then the second summand is suppressed, i.e., $\lambda(x,s)_i = \sigma(\lambda)_x$ for $i=1,\dotsc, \beta_x$.  Furthermore, if $\lambda \in \Lambda_{SI}$, then the space of semi-invariants in $M_\lambda$ is one-dimensional.
\end{proposition}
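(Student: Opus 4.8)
The plan is to read the answer directly off the $\GL(\beta)$-module structure of $M_\lambda$ established in the previous proposition, after grouping the tensor factors by vertex. By that proposition $M_\lambda \cong \bigotimes_{(x,s)\in\X} S_{\lambda(x,s)}V_x$; collecting the (at most two) factors attached to a given vertex $x$, I would rewrite this as an external tensor product
\[
M_\lambda \;\cong\; \bigotimes_{x\in Q_0} N_x,
\qquad
N_x := \begin{cases}
S_{\lambda(x,s_1)}V_x \otimes S_{\lambda(x,s_2)}V_x & \text{if $x$ is coupled},\\[2pt]
S_{\lambda(x,s)}V_x & \text{if $x$ is lonely},
\end{cases}
\]
where the factor $\GL(V_x)$ of $\GL(\beta)=\prod_{x\in Q_0}\GL(V_x)$ acts (diagonally) only on $N_x$. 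A nonzero $v\in M_\lambda$ is a semi-invariant exactly when $v\in M_\lambda^{\SL(\beta)}$ (any such $v$ splits into eigenvectors for the central torus, each of which is then a genuine semi-invariant), and since $\SL(\beta)=\prod_x\SL(V_x)$ acts on disjoint tensor factors we get $M_\lambda^{\SL(\beta)}=\bigotimes_{x\in Q_0} N_x^{\SL(V_x)}$. So the whole statement reduces to computing $N_x^{\SL(V_x)}$ one vertex at a time.

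First the lonely case: $N_x=S_{\lambda(x,s)}V_x$ is an irreducible rational $\GL(V_x)$-module with highest weight the length-$\beta_x$ integer vector $\lambda(x,s)$, and its restriction to $\SL(V_x)$ is again irreducible; it is the trivial $\SL(V_x)$-module precisely when $\lambda(x,s)$ is a scalar multiple of $(1,\dotsc,1)$, i.e. $\lambda(x,s)_i=\sigma(\lambda)_x$ for all $i$, in which case $N_x=(\det V_x)^{\otimes\sigma(\lambda)_x}$ is one-dimensional. Next the coupled case: write $\mu=\lambda(x,s_1)$, $\nu=\lambda(x,s_2)$ (both padded with zeros to length $\beta_x$); the multiplicity of the character $(\det V_x)^{\otimes c}$ in $N_x$ is
\[
\dim\Hom_{\GL(V_x)}\!\bigl((\det V_x)^{\otimes c},\,S_\mu V_x\otimes S_\nu V_x\bigr)
=\dim\Hom_{\GL(V_x)}\!\bigl(S_{\mu^{*}}V_x,\,S_{\nu-c\mathbf 1}V_x\bigr),
\]
where $\mathbf 1=(1,\dotsc,1)$, $(\mu^{*})_i=-\mu_{\beta_x+1-i}$ is the highest weight of $(S_\mu V_x)^{*}$, and $S_{\nu-c\mathbf 1}V_x\cong S_\nu V_x\otimes(\det V_x)^{-c}$. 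Both arguments of the last $\Hom$ are irreducible rational $\GL(V_x)$-modules, so by Schur's lemma it vanishes unless $\mu^{*}=\nu-c\mathbf 1$ and is one-dimensional otherwise. The equality $\mu^{*}=\nu-c\mathbf 1$ reads $-\mu_{\beta_x+1-i}=\nu_i-c$, i.e. $\mu_{\beta_x+1-i}+\nu_i=c$ for all $i$; reindexing $i\mapsto\beta_x+1-i$ this is exactly $\lambda(x,s_1)_i+\lambda(x,s_2)_{\beta_x+1-i}=c$ for $i=1,\dotsc,\beta_x$, and I would set $\sigma(\lambda)_x:=c$. Such a $c$ is unique if it exists, so $\dim N_x^{\SL(V_x)}\le 1$ at every vertex.

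Assembling, $M_\lambda^{\SL(\beta)}=\bigotimes_{x\in Q_0}N_x^{\SL(V_x)}$ is nonzero if and only if $N_x^{\SL(V_x)}\ne 0$ for every $x$, which by the two computations holds precisely when, at each $x$, the relevant linear system in $c$ is consistent — that is, precisely when there is a vector $\sigma(\lambda)\in\Z^{Q_0}$ satisfying the stated equations (with the second summand suppressed at lonely vertices). When this holds, each $N_x^{\SL(V_x)}$ is one-dimensional, hence so is $M_\lambda^{\SL(\beta)}$, giving both assertions. The only points needing genuine care are the identification of the dual and the determinant twist of the rational Schur modules $S_{(\lambda,-\mu)}V$ in terms of the paper's indexing convention, together with the elementary observation that ``$M_\lambda$ contains a semi-invariant'' means literally $M_\lambda^{\SL(\beta)}\ne 0$; once that bookkeeping is pinned down, the rest is a single application of Schur's lemma, so I expect this bookkeeping to be the main (and only) obstacle.
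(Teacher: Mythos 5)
Your argument is correct and follows the same overall strategy as the paper's: regroup the factors of $M_\lambda \cong \bigotimes_{(x,s)\in\X} S_{\lambda(x,s)}V_x$ by vertex, then decide vertex-by-vertex when the local piece $N_x$ contains an $\SL(V_x)$-invariant. Where the paper cites the Littlewood--Richardson rule to decompose $S_{\lambda(x,s_1)}V_x\otimes S_{\lambda(x,s_2)}V_x$, you instead dualize one factor and absorb the determinant twist, so that the multiplicity of $(\det V_x)^{\otimes c}$ becomes a $\Hom$ space between two irreducible rational $\GL(V_x)$-modules and Schur's lemma finishes the job. This is arguably the cleaner route: the weights $\lambda(x,s)=(\lambda(o(x,s)),-\lambda(i(x,s)))$ are rational with negative entries, so any direct LR invocation would require essentially the same twist-and-dualize manoeuvre you make explicit, and your formulation delivers the one-dimensionality claim at no extra cost rather than as the output of a coefficient count. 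One small imprecision in the setup: you say an $\SL(\beta)$-invariant $v\in M_\lambda$ ``splits into eigenvectors for the central torus,'' but no splitting is needed---each factor $S_{\lambda(x,s)}V_x$ is an irreducible $\GL(V_x)$-module, so $\Z(\GL(V_x))$ already acts on all of $N_x$ by a single scalar, $M_\lambda$ is a single $\Z(\GL(\beta))$-eigenspace, and $M_\lambda^{\SL(\beta)}$ coincides exactly with the semi-invariants in $M_\lambda$.
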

\begin{proof}
The decomposition of the tensor product of two Schur modules is given by the Littlewood-Richardson rule (cf. [\cite{W} proposition 2.3.1]).  Applying this to equation \ref{ASSOCIATEDGRADED:BYX}, we see that there is an $\SL(\beta)$-invariant (meaning that the Schur module appearing as a factor at $x$ is a height-$\beta_x$ rectangle for each $x$) if and only if the system of equations in the proposition hold.  
\end{proof}
\begin{corollary}
$\Lambda_{SI}$ is a semigroup under the $+$ operation as defined on partitions in \ref{def:partitionsum}.
\end{corollary}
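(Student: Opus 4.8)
The claim is that $\Lambda_{SI}$ is closed under the componentwise addition $+$ of Definition \ref{def:partitionsum}. The natural strategy is to use the defining condition from Proposition \ref{prop:cauchy}: $\lambda \in \Lambda_{SI}$ exactly when there exists $\sigma(\lambda) \in \Z^{Q_0}$ with $\lambda(x,s_1)_i + \lambda(x,s_2)_{\beta_x + 1 - i} = \sigma(\lambda)_x$ for all $x$ and all $i = 1,\dots,\beta_x$. So the first thing I would check is that if $\lambda,\mu \in \Lambda_{SI}$, then $\lambda + \mu$ satisfies this same system with $\sigma(\lambda+\mu) = \sigma(\lambda) + \sigma(\mu)$. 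Since $(\lambda+\mu)(a)_j = \lambda(a)_j + \mu(a)_j$ for every arrow $a$ and every row index $j$, the sum rule gives, for each $x$ and each $i$,
\[
(\lambda+\mu)(x,s_1)_i + (\lambda+\mu)(x,s_2)_{\beta_x+1-i}
= \bigl(\lambda(x,s_1)_i + \lambda(x,s_2)_{\beta_x+1-i}\bigr) + \bigl(\mu(x,s_1)_i + \mu(x,s_2)_{\beta_x+1-i}\bigr)
= \sigma(\lambda)_x + \sigma(\mu)_x,
\]
which is independent of $i$, exactly as required (and the lonely-vertex case is the same computation with the second summand suppressed). This shows $\lambda+\mu \in \Lambda_{SI}$.

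**Remaining points.** Two small things need to be pinned down. First, one should verify that $\lambda + \mu$ actually lies in $\Lambda(Q,c,\beta,r)$ in the first place, i.e.\ that $(\lambda+\mu)(a)$ has at most $r(a)$ nonzero parts: this is immediate, since the componentwise sum of two partitions each with at most $r(a)$ rows again has at most $r(a)$ rows. Second, one should confirm that $+$ is genuinely a semigroup operation on $\Lambda$ — associativity and commutativity are inherited rowwise from addition in $\N$, and there is an identity (the function sending every arrow to the empty partition), which trivially lies in $\Lambda_{SI}$ with $\sigma = 0$. So $\Lambda_{SI}$ is in fact a submonoid.

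**Main obstacle.** Honestly, there is no serious obstacle here: once Proposition \ref{prop:cauchy} is in hand, the statement is a one-line consequence of the linearity of the defining equations in the entries of $\lambda$, together with the additivity of the $+$ operation. The only mild subtlety is bookkeeping with the index reversal $\beta_x + 1 - i$ on the second color, and making sure that the convention for lonely vertices (where only one color passes through $x$) is handled consistently — but in that case the condition $\lambda(x,s)_i = \sigma(\lambda)_x$ for all $i$ is again preserved verbatim under addition. I would therefore present this as a short corollary proof rather than a standalone argument.
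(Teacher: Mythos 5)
Your proof is correct and takes essentially the same route as the paper: you verify that $\sigma(\lambda)+\sigma(\mu)$ witnesses the condition of Proposition \ref{prop:cauchy} for $\lambda+\mu$, which is exactly the paper's one-line argument, just spelled out in more detail (including the helpful sanity check that $\lambda+\mu$ remains in $\Lambda$).
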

\begin{proof}
Indeed, if $\sigma(\lambda)$ and $\sigma(\mu)$ are the vectors in $\Z^{Q_0}$ satisfying proposition \ref{prop:cauchy} for the sequences $\lambda, \mu \in \Lambda$, then $\sigma(\lambda)+\sigma(\mu)$ is the vector satisfying the proposition for the sequence $\lambda+\mu$.  
\end{proof}

\begin{remark} Recall the definition of $[\lambda_{x,s}]$ in \ref{def:bracketnotation}.  We will collect some useful points:  
\begin{itemize}
\item[a.] this notation allows us to rewrite the domain of the map \ref{map:deltahat} in the form 
\begin{align*}
\bigotimes\limits_{(x,s)\in \X} \bigwedge^{[\lambda_{x,s}]} V_x
\end{align*}
\item[b.] Using this notation, we can restate proposition \ref{prop:cauchy}, namely that $\lambda \in \Lambda_{SI}$ if and only if there is a vector $\sigma(\lambda) \in \Z^{Q_0}$ such that
\begin{itemize}
\item[i.] For each lonely element $(x,s)\in \X$ (i.e., with no other color passing through $x$), $[\lambda_{x,s}]'_i = \beta_x$, $i=1,\dotsc, \sigma(\lambda)_x$
\item[ii.] For each coupled pair $(x,s_1), (x,s_2)\in \X$, $[\lambda_{x,s_1}]'_i +[\lambda_{x,s_2}]'_{\sigma_x-i+1} = \beta_x$ for $i=1,\dotsc, \sigma(\lambda)_x$.
\end{itemize}
\end{itemize}
\end{remark}
This restatement will be useful for defining a map whose image consists of semi-invariants.  
\begin{definition}
For $\lambda\in \Lambda_{SI}$, define the following maps:
\begin{itemize}
\item[i.] If $(x,s)\in \X$ is a lonely pair, then let $$\Delta_i^{\lambda, x}: \bigwedge^{\beta_x} V_x \rightarrow \bigwedge^{[\lambda_{x,s}]'_i} V_x$$ be the identity map for $i=1,\dotsc, \sigma(\lambda)_x$ (since, by the above remark, $\beta_x=[\lambda_{x,s}]'_i$);
\item[ii.] If there is a coupled pair $(x,s_1),\ (x,s_2)\in \X$, then take $$\Delta_i^{\lambda, x}: \bigwedge^{\beta_x} V_x \rightarrow \bigwedge^{[\lambda(x,s_1)]'_i} V_x \otimes \bigwedge^{[\lambda(x,s_2)]'_{\sigma(\lambda)_x-i+1}} V_x$$ to be the diagonalization map (since, by the above remark, the sum of the two powers is precisely $\beta_x$).  
\end{itemize}
We collect these maps into the map $\Delta^\lambda$ in the following way:
\begin{align}\label{map:Deltalambda}
\Delta^\lambda:= \bigotimes\limits_{x\in Q_0} \bigotimes\limits_{i=1}^{\sigma(\lambda)_x} \Delta_i^{\lambda, x}: \left( \bigotimes\limits_{x\in Q_0} \left( \bigwedge^{\beta_x} V_x\right)^{\sigma(\lambda)_x}\right) \rightarrow \bigotimes\limits_{(x,s)\in \X} \bigwedge^{[\lambda_{x,s}]} V_x.
\end{align}
\end{definition}

Notice that $\Delta^\lambda$ is a $\GL(\beta)$-equivariant map, since both identity and diagonalization are such.  Fixing a basis for each space $V_x$, and let $\underline{e}$ be the corresponding basis element of $\bigotimes\limits_{x\in Q_0} \left(\bigwedge^{\beta_x} V_x\right)^{\sigma(\lambda)_x}$ (note that this space is one-dimensional).  
\begin{definition}
Denote by $$m_\lambda = \hat{\delta}_\lambda \Delta^\lambda(\underline{e}).$$  This is unique up to scalar multiple.
\end{definition}

\begin{proposition}\label{prop:explicitsemiinvariants}
For $\lambda \in \Lambda_{SI}$, the function $m_\lambda$ is a semi-invariant of weight $\sigma(\lambda)$.  Furthermore, $\overline{m}_\lambda \neq 0 \in \mathcal{F}_\lambda/\mathcal{F}_{\prec \lambda}$.
\end{proposition}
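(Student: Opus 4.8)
The plan is to verify the two assertions separately, using the $\GL(\beta)$-equivariance of the composite $\hat{\delta}_\lambda \circ \Delta^\lambda$ for the weight claim, and a reduction to standard multitableaux for the nonvanishing claim. First I would observe that $\Delta^\lambda$ is $\GL(\beta)$-equivariant (as remarked immediately before the definition of $m_\lambda$, since identity and diagonalization maps both are), and that $\hat{\delta}_\lambda$ is $\GL(\beta)$-equivariant by construction, being a product of the maps built from minors $\Delta^{(i)}_{I,J}$ which transform according to change of basis. Hence $\hat{\delta}_\lambda \circ \Delta^\lambda$ carries the one-dimensional $\GL(\beta)$-module $\bigotimes_{x \in Q_0}\bigl(\bigwedge^{\beta_x} V_x\bigr)^{\sigma(\lambda)_x}$, on which $\GL(\beta)$ acts by the character $g \mapsto \prod_x (\det g_x)^{\sigma(\lambda)_x} = \sigma(\lambda)(g)$, into $\C[\Rep_{Q,c}(\beta,r)]$ equivariantly. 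Therefore $m_\lambda = \hat{\delta}_\lambda \Delta^\lambda(\underline{e})$ spans a one-dimensional subrepresentation of weight $\sigma(\lambda)$; in particular it is $\SL(\beta)$-invariant, i.e. a semi-invariant of weight $\sigma(\lambda)$.

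Next I would establish $\overline{m}_\lambda \neq 0$ in $M_\lambda = \mathcal{F}_\lambda/\mathcal{F}_{\prec\lambda}$. By the proposition of DeConcini--Strickland (and its generalization in this section), $M_\lambda$ has a basis indexed by standard multitableaux of shape $\lambda$, and $M_\lambda \cong \bigotimes_{(x,s)\in\X} S_{\lambda(x,s)} V_x$; moreover the corollary tells us that for any multitableau $t$ of shape $\lambda$, $\hat{\delta}_\lambda(t) \equiv s(t) \pmod{\mathcal{F}_{\prec\lambda}}$ where $s(t)$ is an explicit (generally nonzero) linear combination of standard multitableaux of the same content as $t$. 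So it suffices to produce one multitableau $t_0$ appearing with nonzero coefficient in the expansion of $\Delta^\lambda(\underline{e})$ (as an element of $\bigwedge^\lambda V = \bigotimes_{(x,s)\in\X}\bigwedge^{[\lambda_{x,s}]} V_x$, which has basis the column-strict fillings of the $[\lambda_{x,s}]$) such that $s(t_0) \neq 0$. The natural candidate is the "all-rows-in-order" filling: $\Delta^\lambda(\underline{e})$, being built from identities and comultiplications $\bigwedge^{\beta_x} V_x \to \bigwedge^a V_x \otimes \bigwedge^b V_x$ applied to $e_1 \wedge \cdots \wedge e_{\beta_x}$, expands (up to signs) as a sum of terms indexed by ways of splitting $\{1,\dots,\beta_x\}$ into the prescribed column shapes; among these is the term where each column of each $[\lambda_{x,s}]$ is filled with an initial or terminal segment of $\{1,\dots,\beta_x\}$ as dictated by the rectangle condition in the Remark, which is automatically column-strict and in fact already standard. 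For that choice $t_0$ is standard, so $s(t_0) = t_0 \neq 0$ in the standard basis, and since distinct standard multitableaux are linearly independent in $M_\lambda$ and the comultiplication expansion has the identity-leading term, $\overline{m}_\lambda \neq 0$.

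The main obstacle I anticipate is the bookkeeping in the last step: showing that the image of $\underline{e}$ under $\Delta^\lambda$, after applying $\hat{\delta}_\lambda$ and straightening modulo $\mathcal{F}_{\prec\lambda}$, does not collapse to zero. Two points need care. First, $\Delta^\lambda(\underline{e})$ is a sum over splittings and one must check that the "segment" term I singled out really occurs with coefficient $\pm 1$ and is not cancelled — this follows because that term has a distinct content (hence lies in a distinct weight space of $\bigwedge^\lambda V$) from the generic non-segment terms, or more simply because after straightening each term lands in a definite content-graded piece and the segment term is already standard while others may straighten into it but cannot cancel it by the content-preserving and leading-term properties in the corollary. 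Second, one must confirm the rectangle conditions from the Remark (i.e. $[\lambda_{x,s_1}]'_i + [\lambda_{x,s_2}]'_{\sigma_x - i + 1} = \beta_x$, and $[\lambda_{x,s}]'_i = \beta_x$ in the lonely case) indeed guarantee that the prescribed column heights sum correctly so that the diagonalization maps $\Delta_i^{\lambda,x}$ are defined and the target fillings exist — but this is exactly the restatement of $\lambda \in \Lambda_{SI}$, so it is available. Everything else is routine: equivariance of minors and the standard-basis description of $M_\lambda$ are quoted from earlier in the paper.
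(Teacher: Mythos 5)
Your proposal is correct and follows essentially the same route as the paper: equivariance of $\hat{\delta}_\lambda$ and $\Delta^\lambda$ for the weight statement, and for nonvanishing, isolating the ``segment'' filling $t_0$ (the paper's $t^\circ_\lambda$), observing it is already standard, and arguing that it cannot be cancelled after straightening because its content is unique. One small place where you are less explicit than the paper: you assert that the segment term has content distinct from the other terms in the expansion of $\Delta^\lambda(\underline{e})$, whereas the paper actually proves this by noting that the extremal content $\kappa(t^\circ_\lambda)$ uniquely \emph{determines} the filling (each column is forced to be an initial or terminal segment), so no other filling with property $(\ast)$ can share it. Your phrase ``others may straighten into it but cannot cancel it'' is also a bit misleading as written — the point is precisely that others \emph{cannot} straighten into the same content class — but the preceding sentence (distinct content $\Rightarrow$ distinct weight space of $\bigwedge^\lambda V$) gives the correct reason.
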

The first statement is evident since both $\hat{\delta}_\lambda$ and $\Delta^\lambda$ are $\GL(\beta)$-equivariant homomorphisms, and the weight is clear from the action on the domain of the map.  We delay the proof of the second statement for a brief description of the straightening relations in $\C [\Rep_{Q, c}(\beta, r)]$ relative to fillings of Young diagrams, since the description of $m_\lambda$ is not given in terms of \emph{standard} multitableaux.  We will come back to this proof when we can show that there is a \emph{standard} multitableau of shape $\lambda$ whose coefficient is non-zero in $m_\lambda$.  The following is simply a generalization of the material in section \ref{sec:TheCoordinateRing}.  We record these statements as corollaries to DeConcini and Strickland.

\begin{corollary}[\cite{DCS}]
If $t_\lambda$ is a filling of $\lambda$, then 
$$\hat\delta_\lambda(t_\lambda) = s(t_\lambda)+y(t_\lambda)$$ where $y(t_\lambda) \in \mathcal{F}_{\prec \lambda}$ and $s(t_\lambda)$ is a linear combination of standard fillings of the same content as $t_\lambda)$.  
\end{corollary}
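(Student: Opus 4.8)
The plan is to deduce the statement from its single–variety-of-complexes analogue — the corollary to \cite{DCS} recorded at the end of section \ref{sec:TheCoordinateRing} — by exploiting the product structure of $\Rep_{Q,c}(\beta,r)$ over the colors. Recall from the proof in section \ref{sec:coordinateringofrep} that $\Rep_{Q,c}(\beta,r)\cong\prod_{s\in S}\Compl_{\lvert X_s\rvert}(\beta^s,r^s)$, hence $\C[\Rep_{Q,c}(\beta,r)]\cong\bigotimes_{s\in S}\C[\Compl_{\lvert X_s\rvert}(\beta^s,r^s)]$; under this identification the map $\hat\delta_\lambda$ is the tensor product $\bigotimes_{s\in S}\delta_{\lambda^s}$ of the maps of section \ref{sec:TheCoordinateRing}, one per color, a filling $t_\lambda$ of $\lambda$ is exactly a tuple $(t_{\lambda^s})_{s\in S}$ of fillings of the $\lambda^s$, and the content of $t_\lambda$ is the collection of the contents of the $t_{\lambda^s}$. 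Thus $\hat\delta_\lambda(t_\lambda)$ is identified with the product $\prod_{s\in S}\delta_{\lambda^s}(t_{\lambda^s})$ taken in the tensor-product ring.

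First I would check that $\{\mathcal{F}_\lambda\}$ is the product of the filtrations on the factors. Since $\operatorname{image}(\hat\delta_\mu)=\bigotimes_{s}\operatorname{image}(\delta_{\mu^s})$, since the tensor product of subspaces distributes over sums, and since $\mu\preceq\lambda$ on $\Lambda$ means $\mu^s\preceq\lambda^s$ for every $s$, one gets $\mathcal{F}_\lambda=\bigotimes_{s}\mathcal{F}_{\lambda^s}$; similarly, as $\mu\prec\lambda$ means $\mu^s\preceq\lambda^s$ for all $s$ with $\mu^{s_0}\prec\lambda^{s_0}$ for at least one $s_0$, one gets $\mathcal{F}_{\prec\lambda}=\sum_{s_0\in S}\big(\bigotimes_{s\ne s_0}\mathcal{F}_{\lambda^s}\big)\otimes\mathcal{F}_{\prec\lambda^{s_0}}$. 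Now apply the corollary of section \ref{sec:TheCoordinateRing} color by color: $\delta_{\lambda^s}(t_{\lambda^s})=s(t_{\lambda^s})+y(t_{\lambda^s})$ with $y(t_{\lambda^s})\in\mathcal{F}_{\prec\lambda^s}$ and $s(t_{\lambda^s})\in\mathcal{F}_{\lambda^s}$ a linear combination of standard fillings of shape $\lambda^s$ with the same content as $t_{\lambda^s}$. Substituting these identities into $\hat\delta_\lambda(t_\lambda)=\prod_{s\in S}\delta_{\lambda^s}(t_{\lambda^s})$, expanding the product, and setting $s(t_\lambda):=\prod_{s\in S}s(t_{\lambda^s})$, the summand $s(t_\lambda)$ is a linear combination of standard multitableaux of shape $\lambda$ of the same content as $t_\lambda$ — a product over $S$ of standard fillings is precisely a standard multitableau of shape $\lambda$, and contents add up color-wise — while every other summand in the expansion carries a factor $y(t_{\lambda^{s_0}})\in\mathcal{F}_{\prec\lambda^{s_0}}$ for some $s_0$ and has all remaining factors in the respective $\mathcal{F}_{\lambda^s}$, hence lies in $\big(\bigotimes_{s\ne s_0}\mathcal{F}_{\lambda^s}\big)\otimes\mathcal{F}_{\prec\lambda^{s_0}}\subseteq\mathcal{F}_{\prec\lambda}$. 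Therefore $y(t_\lambda):=\hat\delta_\lambda(t_\lambda)-s(t_\lambda)\in\mathcal{F}_{\prec\lambda}$; when $t_\lambda$ is itself standard the claim is trivial with $y(t_\lambda)=0$.

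The only genuine mathematical content is imported from \cite{DCS} through the single-color corollary, whose proof is the straightening algorithm for varieties of complexes: a non-standard filling is rewritten, using the quadratic shuffle (Plücker-type) relations among products of minors, as a combination of fillings strictly closer to standard in a fixed monomial order on fillings of the diagrams $[\lambda(i):\lambda(i-1)]$ — these relations merely permute indices among adjacent columns and so preserve the multiset of entries, i.e. the content — together with correction terms that, thanks to the defining relations $A_{i+1}A_i=0$ of $\Compl$, have strictly smaller shape and thus land in $\mathcal{F}_{\prec\lambda^s}$; the process terminates because standard fillings are the minimal elements for that order. I expect the main obstacle in the write-up to be entirely bookkeeping: verifying carefully that $\{\mathcal{F}_\lambda\}$ is the product filtration (distributivity of $\otimes$ over the sums defining $\mathcal{F}_\lambda$ and $\mathcal{F}_{\prec\lambda}$, using that $\preceq$ on $\Lambda$ is defined color-wise) and that both ``standard'' and ``content'' decompose as products over $S$; the straightening itself introduces no idea beyond section \ref{sec:TheCoordinateRing}.
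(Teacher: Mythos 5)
Your proof is correct and follows exactly the route the paper intends: the statement is recorded as a corollary of DeConcini--Strickland precisely because it reduces to the single-color case via the tensor-product decomposition $\C[\Rep_{Q,c}(\beta,r)]\cong\bigotimes_{s\in S}\C[\Compl_{\lvert X_s\rvert}(\beta^s,r^s)]$, and your verification that $\mathcal{F}_\lambda=\bigotimes_s\mathcal{F}_{\lambda^s}$ and $\mathcal{F}_{\prec\lambda}=\sum_{s_0}\bigl(\bigotimes_{s\ne s_0}\mathcal{F}_{\lambda^s}\bigr)\otimes\mathcal{F}_{\prec\lambda^{s_0}}$ is just the bookkeeping the paper leaves implicit when it says the corollary is ``simply a generalization'' of the variety-of-complexes case. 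No gaps; the expansion argument and the color-wise decomposition of ``standard'' and ``content'' are both sound.
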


\begin{corollary}[\cite{DCS}]
If $t_\lambda$ and $t_\mu$ are fillings of shape $\lambda, \mu$, then $$\hat\delta_\lambda(t_\lambda) \cdot \hat\delta_\mu(t_\mu) \in \mathcal{F}_{\lambda+\mu}.$$
\end{corollary}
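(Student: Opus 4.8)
The plan is to deduce the statement from the corresponding fact for a single variety of complexes, Proposition~\ref{prop:semigroupcomplex}, by exploiting the product decomposition $\C[\Rep_{Q,c}(\beta,r)]\cong\bigotimes_{s\in S}\C[\Compl_{\lvert X_s\rvert}(\beta^s,r^s)]$ together with the fact that \emph{every} object in sight --- the maps $\hat\delta_\bullet$, the filtration $\mathcal{F}_\bullet$, the partial order $\preceq$, and the operation $+$ on $\Lambda$ --- is assembled one color at a time. Once those color-wise compatibilities are spelled out, the corollary follows with no new geometric input.

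First I would record the color-wise factorizations. A multitableau $t_\lambda$ of shape $\lambda$ is by definition a column-strictly-increasing filling of each diagram $[\lambda_{x,s}]$, $(x,s)\in\X$; since each pair $(x,s)$ belongs to exactly one color, $t_\lambda$ is the disjoint collection, over $s\in S$, of the fillings of the diagrams $[\lambda_{x,s}]$ with $x\in X_s$, and that subcollection is precisely a multitableau $t_{\lambda^s}$ of shape $\lambda^s$ in the sense of Section~\ref{sec:TheCoordinateRing} (one checks that $[\lambda_{x,s}]=[\lambda^s(i{+}1):\lambda^s(i)]$ along the color-$s$ chain, including the degenerate end cases). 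Because $\hat\delta_\lambda$ is defined in (\ref{map:deltahat}) as the product of the maps $\hat\delta_{\lambda^s}$, each of which is the DeConcini--Strickland map $\delta_{\lambda^s}$ of Section~\ref{sec:TheCoordinateRing} applied to the color-$s$ variety of complexes, this yields
\begin{align*}
\hat\delta_\lambda(t_\lambda)=\bigotimes_{s\in S}\delta_{\lambda^s}(t_{\lambda^s}),\qquad\text{and hence}\qquad \operatorname{image}(\hat\delta_\mu)=\bigotimes_{s\in S}\operatorname{image}(\delta_{\mu^s})
\end{align*}
as subspaces of $\bigotimes_{s\in S}\C[\Compl_{\lvert X_s\rvert}(\beta^s,r^s)]$.

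Next I would identify the filtration. Since $\mu\preceq\lambda$ in $\Lambda$ means exactly $\mu^s\preceq\lambda^s$ for every $s$, and since an element of $\Lambda$ is freely determined by the tuple of its colored pieces, distributing the sum over the tensor product gives
\begin{align*}
\mathcal{F}_\lambda=\sum_{\mu\preceq\lambda}\operatorname{image}(\hat\delta_\mu)=\sum_{\substack{(\mu^s)_{s\in S}\\ \mu^s\preceq\lambda^s}}\ \bigotimes_{s\in S}\operatorname{image}(\delta_{\mu^s})=\bigotimes_{s\in S}\Bigl(\sum_{\mu^s\preceq\lambda^s}\operatorname{image}(\delta_{\mu^s})\Bigr)=\bigotimes_{s\in S}\mathcal{F}_{\lambda^s},
\end{align*}
where $\mathcal{F}_{\lambda^s}$ on the right is the filtration step of Section~\ref{sec:TheCoordinateRing} for the color-$s$ complex.

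Finally, multiplication in the tensor-product ring is carried out factor by factor, so
\begin{align*}
\hat\delta_\lambda(t_\lambda)\cdot\hat\delta_\mu(t_\mu)=\bigotimes_{s\in S}\bigl(\delta_{\lambda^s}(t_{\lambda^s})\cdot\delta_{\mu^s}(t_{\mu^s})\bigr),
\end{align*}
and Proposition~\ref{prop:semigroupcomplex} puts each factor into $\mathcal{F}_{\lambda^s+\mu^s}$; since $(\lambda+\mu)^s=\lambda^s+\mu^s$ by the definition of $+$ on $\Lambda$, the product lies in $\bigotimes_{s\in S}\mathcal{F}_{(\lambda+\mu)^s}=\mathcal{F}_{\lambda+\mu}$. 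I do not expect a genuine obstacle: all the content is carried by Proposition~\ref{prop:semigroupcomplex}, and the remainder is the bookkeeping of the color-wise product decomposition. The one point warranting care is the middle displayed equality --- that the sum over $\mu\preceq\lambda$ of the pure-tensor subspaces equals the tensor product of the per-color sums --- which uses that the colored pieces $\mu^s$ vary independently over $\{\nu^s\preceq\lambda^s\}$ as $s$ varies, together with the elementary fact that for families of subspaces of a finite-dimensional tensor product the sum of the pure-tensor products coincides with the tensor product of the sums.
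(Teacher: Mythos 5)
Your proof is correct and follows essentially the same route the paper intends: the paper records this as a corollary to DeConcini--Strickland precisely because $\C[\Rep_{Q,c}(\beta,r)]$ factors as the tensor product over colors of coordinate rings of varieties of complexes, so the filtration, the $\hat\delta$-maps, the partial order, and $+$ on $\Lambda$ all decompose color-wise and Proposition~\ref{prop:semigroupcomplex} applies factor by factor. Your explicit verification of the middle identity $\mathcal{F}_\lambda=\bigotimes_{s}\mathcal{F}_{\lambda^s}$ (via independence of the colored pieces and the compatibility of sums with tensor products of subspaces, which in fact needs no finite-dimensionality) is exactly the bookkeeping the paper leaves implicit.
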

\begin{proof}[proof of proposition \ref{prop:explicitsemiinvariants}]
It remains to be shown that $\overline{m}_\lambda \neq 0$ in $\mathcal{F}_{\lambda}/\mathcal{F}_{\prec \lambda}$.  For a filling $t_\lambda$ of $\lambda$, let $I(t_\lambda)_{x,s, i}$ be the set of entries in the $i$-th column of $[\lambda_{x,s}]$.  Notice that $\Delta^\lambda(\underline{e})$ is the sum of all fillings $t_\lambda$ of $\lambda$ satisfying the property that $I(t_\lambda)_{x,s,i}\cup I(t_\lambda)_{x,s', \sigma(\lambda)_x-i+1} = \{1,\dotsc, \beta_x\}$, call this property $(\ast)$.  Pick one distinguished element from each coupled pair $(x,s),\ (x,s')\in \X$.  Consider the filling $t^\circ_\lambda$ of $\lambda$ with $I(t^\circ_\lambda)_{x,s,i} = \{1,2, \dotsc, [\lambda_{x,s}]'_i\}$ whenever $(x,s)$ is the distinguished element in the coupled pair and $I(t^\circ_\lambda)_{x,s', i} = \{\beta_x, \beta_x-1,\dotsc, [\lambda_{x,s'}]'_i\}$.  This filling satisfies the property $(\ast)$ above so it appears with non-zero coefficient (namely $1$) in $m_\lambda$.  Notice that this filling is standard.  We will show that the content of this filling is unique among fillings appearing with non-zero coefficient in $\Delta^\lambda(\underline{e})$, so after straightening the other fillings, this distinguished filling cannot be canceled.  Indeed, the content of this filling is $\kappa(t^\circ_\lambda)_{x,s}=([\lambda_{x,s}]_1, [\lambda_{x,s}]_2,\dotsc)$ if $(x,s)$ is the distinguished pair, and $(\kappa(t^\circ_\lambda)_{x,s})_{\beta_x-j+1}=[\lambda_{x,s}]_j$ otherwise.  This content uniquely determines the filling $t_\lambda^\circ$, so indeed $\hat\delta_\lambda(t_\lambda^\circ)$ appears with non-zero coefficient in $\overline{m}_\lambda$.  
\end{proof}

\begin{theorem}
The ring of semi-invariants $\SI_{Q, c}(\beta, r)$ is isomorphic to the semigroup ring $\C[\Lambda_{SI}(Q, c,\beta, r)]$.  
\end{theorem}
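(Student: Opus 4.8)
The plan is to show that the basis $\{m_\lambda\}_{\lambda \in \Lambda_{SI}}$ for $\SI_{Q,c}(\beta, r)$ respects the semigroup structure, i.e. that $m_\lambda \cdot m_\mu = m_{\lambda + \mu}$ up to a nonzero scalar. First I would establish that $\{m_\lambda\}_{\lambda \in \Lambda_{SI}}$ is indeed a vector-space basis: by Proposition \ref{prop:cauchy} the space of semi-invariants in $M_\lambda = \mathcal{F}_\lambda/\mathcal{F}_{\prec \lambda}$ is at most one-dimensional, spanned by $\overline{m}_\lambda$ when $\lambda \in \Lambda_{SI}$ (nonvanishing of $\overline{m}_\lambda$ being Proposition \ref{prop:explicitsemiinvariants}). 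Since the filtration $\{\mathcal{F}_\lambda\}$ is exhaustive and $\SL(\beta)$ is reductive, taking $\SL(\beta)$-invariants is exact on the associated graded pieces, so $\dim \SI_{Q,c}(\beta,r) = \sum_{\lambda \in \Lambda_{SI}} \dim (M_\lambda)^{\SL(\beta)} = \card{\Lambda_{SI}}$, and the $m_\lambda$ are linearly independent because they lie in distinct filtration layers. This gives a $\C$-linear isomorphism $\C[\Lambda_{SI}] \to \SI_{Q,c}(\beta, r)$ sending the semigroup generator $[\lambda]$ to $m_\lambda$.

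Next I would verify this linear isomorphism is a ring homomorphism. The key computation is that $m_\lambda \cdot m_\mu$ and $m_{\lambda+\mu}$ agree in $\mathcal{F}_{\lambda+\mu}/\mathcal{F}_{\prec(\lambda+\mu)}$ up to a nonzero scalar. By the corollary to DeConcini--Strickland, $\hat\delta_\lambda(t_\lambda)\cdot \hat\delta_\mu(t_\mu) \in \mathcal{F}_{\lambda+\mu}$, so $m_\lambda \cdot m_\mu \in \mathcal{F}_{\lambda+\mu}$, and it is a semi-invariant of weight $\sigma(\lambda)+\sigma(\mu) = \sigma(\lambda+\mu)$; since $(M_{\lambda+\mu})^{\SL(\beta)}$ is one-dimensional, $m_\lambda \cdot m_\mu$ is a scalar multiple of $m_{\lambda+\mu}$ modulo $\mathcal{F}_{\prec(\lambda+\mu)}$. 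To pin down that this scalar is nonzero, I would argue as in the proof of Proposition \ref{prop:explicitsemiinvariants}: the distinguished standard filling $t^\circ_{\lambda+\mu}$ has a content that occurs uniquely among fillings in $\Delta^{\lambda+\mu}(\underline{e})$, and — using the column-absorption description in the proof of Proposition \ref{prop:semigroupcomplex} — the product $\hat\delta_\lambda(\Delta^\lambda(\underline e)) \cdot \hat\delta_\mu(\Delta^\mu(\underline e))$ is realized by gluing the columns of the two distinguished tableaux $t^\circ_\lambda$ and $t^\circ_\mu$, which produces exactly a tableau of shape $[\lambda+\mu]$ whose column-content matches $t^\circ_{\lambda+\mu}$ with coefficient $1$. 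Hence the coefficient of $\hat\delta_{\lambda+\mu}(t^\circ_{\lambda+\mu})$ in $\overline{m_\lambda \cdot m_\mu}$ is nonzero, so $m_\lambda \cdot m_\mu = c_{\lambda,\mu}\, m_{\lambda+\mu}$ with $c_{\lambda,\mu}\neq 0$. After rescaling the basis elements $m_\lambda$ appropriately (choosing representatives so that the distinguished filling always appears with coefficient $1$ and checking the scalars multiply consistently, which follows from associativity), we obtain $m_\lambda \cdot m_\mu = m_{\lambda+\mu}$ on the nose.

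Finally I would assemble these: the $\C$-linear map $\C[\Lambda_{SI}] \to \SI_{Q,c}(\beta,r)$, $[\lambda]\mapsto m_\lambda$, is bijective (Step 1) and multiplicative (Step 2), hence an isomorphism of $\C$-algebras. I expect the main obstacle to be the nonvanishing-of-the-scalar step: showing that no cancellation occurs when $\hat\delta_\lambda(\Delta^\lambda(\underline e))\cdot \hat\delta_\mu(\Delta^\mu(\underline e))$ is straightened into standard form, which requires tracking that the distinguished glued filling $t^\circ_{\lambda}\sqcup t^\circ_\mu$ remains standard (or at least straightens with a nonzero coefficient onto $t^\circ_{\lambda+\mu}$) and that its content separates it from every other filling appearing in the product. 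The content-uniqueness argument from Proposition \ref{prop:explicitsemiinvariants} should transfer, since the content of a product is the sum of the contents and the distinguished contents add to the distinguished content of $\lambda+\mu$; the subtlety is purely combinatorial bookkeeping about which column of which diagram each entry lands in after absorption.
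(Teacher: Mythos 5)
Your plan for bijectivity is a legitimate alternative to the paper's argument: the paper establishes injectivity and surjectivity of $\lambda \mapsto m_\lambda$ directly by projecting onto the top pieces $M_\mu$ for $\mu$ maximal among those appearing, whereas you propose a dimension count using reductivity of $\SL(\beta)$ and the one-dimensionality of $(M_\lambda)^{\SL(\beta)}$. The idea is sound, but the identity ``$\dim \SI_{Q,c}(\beta,r) = \card{\Lambda_{SI}}$'' as written equates two infinite quantities; you need to grade both sides (e.g.\ by polynomial degree, or by the weight lattice) and compare dimensions degreewise. That said, this is a fixable presentation issue, not a conceptual gap, and the comparison of approaches here is fair.

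The genuine gap is in the multiplicativity step. What your argument actually establishes is that $\overline{m_\lambda \cdot m_\mu} = c_{\lambda,\mu}\,\overline{m}_{\lambda+\mu}$ with $c_{\lambda,\mu}\neq 0$ in the quotient $M_{\lambda+\mu}=\mathcal{F}_{\lambda+\mu}/\mathcal{F}_{\prec(\lambda+\mu)}$. From this you cannot conclude $m_\lambda \cdot m_\mu = c_{\lambda,\mu}\, m_{\lambda+\mu}$ in $\SI_{Q,c}(\beta,r)$ itself: a priori $m_\lambda\cdot m_\mu - c_{\lambda,\mu}\,m_{\lambda+\mu}$ could be a nonzero semi-invariant in $\mathcal{F}_{\prec(\lambda+\mu)}$, i.e.\ a combination of $m_\nu$ for $\nu\prec \lambda+\mu$ in $\Lambda_{SI}$. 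The weight grading does not exclude this, since $\sigma(\nu)=\sigma(\lambda+\mu)$ can hold for $\nu\prec\lambda+\mu$, and neither does the polynomial degree, because $\preceq$ is a lexicographic-type order rather than the containment order. Your argument only tracks the single distinguished summand $t^\circ_\lambda \otimes t^\circ_\mu$. The paper closes this hole by producing a sign-respecting bijection between \emph{all} pairs of summands appearing in $\Delta^\lambda(\underline e)\otimes\Delta^\mu(\underline e)$ and \emph{all} summands in $\Delta^{\lambda+\mu}(\underline e)$, via the column-partition of $[(\lambda+\mu)_{x,s}]$ into gray ($\lambda$) and white ($\mu$) columns; applying $\hat\delta_{\lambda+\mu}$ then gives the honest identity $m_\lambda\cdot m_\mu = m_{\lambda+\mu}$ with no lower-order correction. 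As a side effect, this also dissolves your final ``rescaling'' step, which as written asserts without justification that the symmetric $2$-cocycle $c_{\lambda,\mu}$ on $\Lambda_{SI}$ is a coboundary; in the paper's argument the constants are all equal to $1$ from the outset, so no cocycle needs trivializing. To complete your proof, either run the full column-absorption bijection (not just on the distinguished tableau), or find an independent reason that $m_\lambda\cdot m_\mu$ lies in the one-dimensional span of $m_{\lambda+\mu}$ rather than merely projecting onto it.
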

\begin{proof}
We have already shown that there is a (vector space) homomorphism $m: \C[\Lambda_{SI}(Q, c, \beta, r)]\rightarrow \SI_{Q, c}(\beta, r)$ where $m(\lambda)=m_\lambda$. 
\begin{itemize}
\item[Claim 1:] $m$ is injective.  

Suppose that $y=m(\sum_{\lambda \in T} a_\lambda \lambda) =\sum_{\lambda \in T} a_\lambda m_\lambda=0 \in \SI_{Q, c}(\beta, r)$, where $T$ is a finite subset of $\Lambda_{SI}$.  Let $\operatorname{max}(T)$ be the set of maximal elements in $T$ under the partial order $\preceq$ defined on $\Lambda$.  Then $y\in \sum\limits_{\lambda \in \operatorname{max}(T)} \mathcal{F}_{\lambda}$  Now for each $\mu \in \operatorname{max}(T)$ there is a surjection
\[
	\varphi_\mu: \sum\limits_{\lambda \in \operatorname{max}(T)} \mathcal{F}_\lambda \rightarrow M_\mu
	\]
given by the quotient of this space by the subspace $\mathcal{F}_{\prec \mu} + \sum\limits_{\lambda\in \operatorname{max}(T)\setminus \mu} \mathcal{F}_\lambda$.  Given that $y$ is a semi-invariant, its image under this map is $a_\mu \overline{m}_\mu$, since the space of semi-invariants in $M_\lambda$ is one dimensional.  By assumption, this is $0$, and since $\overline{m}_\mu \neq 0$, we must have that $a_\mu =0$ for all $\mu \in \operatorname{max}(T)$, contradicting the choice of $\operatorname{max}(T)$.  
\item[Claim 2:] The map $m$ is surjective.  

This fact exploits the same methods as the previous claim: we show that the maximal $\lambda$ appearing in a semi-invariant must be elements of $\Lambda_{SI}$, and subtract the corresponding semi-invariant $m_\lambda$ and are left with a semi-invariant function with smaller terms.  Suppose that $y\in \SI_{Q, c}(\beta, r)$, and write $y=\sum_{\lambda \in T} a_\lambda x_\lambda$ where $T\subset \Lambda$ is a finite subset (recall that $\C[\Rep_{Q, c}(\beta, r)$ has a basis given by standard fillings of all $\lambda\in \Lambda$, and take $x_\lambda$ to be the summands corresponding to $\lambda$).  Let $\operatorname{max}(T)$ again be the maximal elements in $T$ under the partial order $\preceq$.  Notice that the collection of empty partitions is indeed an element of $\Lambda_{SI}$, so we will proceed by induction on $\operatorname{height}(T)$ defined to be the length of the longest chain joining both the empty partition and an element of $\operatorname{max}(T)$.  For $\operatorname{height}(T)=0$, $m$ is a constant, which is the image of the same constant under the map $m$.  For $\mu\in \operatorname{max}(T)$, notice that $\varphi_\mu(y)=a_\mu \overline{x}_\mu$ must be a semi-invariant in $\gr(\Rep_{Q, c}(\beta, r))$, so $\mu\in \Lambda_{SI}$.  Therefore, for each $\mu \in \operatorname{max}(T)$, $a_\mu \overline{x}_\mu = b_\mu \overline{m}_\mu$.  In particular, $a_\mu x_\mu - b_\mu m_\mu \in \mathcal{F}_{\prec \mu}$.  Now let 
\[
	y_1 = y - \sum_{\mu \in \operatorname{max}(T)} b_\mu m_\mu.
	\]
By the above remarks, then, $y_1 = \sum_{\lambda \in T_1} a'_\lambda x_\lambda$ where $T_1=\{\lambda \prec \operatorname{max}(T)\}$.  As the difference of semi-invariants, $y_1$ is itself a semi-invariant, and $\operatorname{height}(T_1)<\operatorname{height}(T)$.  By induction, then, $y_1=\sum_{\lambda \in \Lambda_{SI}} b_\lambda m_\lambda$, so 
\[
	y=\left(\sum_{\mu \in \operatorname{max}(T)} b_\mu m_\mu\right) + \left(\sum_{\lambda\in T_1} b_\lambda m_\lambda\right).
	\]
\item[Claim 3:] $m$ is a semigroup homomorphism.  

This is proven directly.  It has already been shown that $m_\lambda \cdot m_\mu \in \mathcal{F}_{\lambda+\mu}$.  Now $\Delta^\lambda(\underline{e})$ is a linear combination of all multitableau of shape $\lambda$ such that $I(t_\lambda)_{x,s,i}\cup I(t_\lambda)_{x,s', \sigma(\lambda)_x-i+1} = \{1,\dotsc, \beta_x\}$.  The coefficient of each multitableau is the sign of the permutation taking the sequence $(I(t_\lambda)_{x,s,i}, I(t_\lambda)_{x,s', \sigma(\lambda)_x-i+1})$ into increasing order.  Now consider $\Delta^{\lambda+\mu}(\underline{e})$.  We will simply show a bijection between pairs $t_\lambda, t_\mu$, summands in $\Delta^\lambda(\underline{e})$ and $\Delta^\mu(\underline{e})$, respectively, and summands in $\Delta^{\lambda+\mu}(\underline{e})$, and show that the signs agree.  To this end, consider $[(\lambda+\mu)_{x,s}]$.  Recall that this is the shape given by adjoining $(\beta_x-(\lambda+\mu)(i(x,s))$ and $(\lambda+\mu)(o(x,s))$.  Notice that by definition of $(\lambda+\mu)(o(x,s))$, we can choose indices $1\leq i_1< i_2 <\dotsc <i_{\lambda(o(x,s))_1}\leq (\lambda+\mu)(o(x,s))_1$ such that 
\begin{align*}
((\lambda+\mu)(o(x,s))'_{i_1}, (\lambda+\mu)(o(x,s))'_{i_2}, \dotsc, (\lambda+\mu)(o(x,s))'_{i_{\lambda(o(x,s))_1}})\\
=(\lambda(o(x,s))_1, \dotsc, \lambda(o(x,s))_{\lambda(o(x,s))_1}).
\end{align*}
 This is easiest to see in a picture:
\begin{align*}
\begin{ytableau}
*(gray) & *(gray) & *(gray) & *(gray)\\
*(gray) & *(gray) & *(gray) & \none\\
*(gray) \\
*(gray) \end{ytableau} + \begin{ytableau} *(white)& *(white) & *(white)\\ *(white)&*(white)\\ *(white)\end{ytableau} = \begin{ytableau} *(gray) & *(white) & *(white)& *(gray) & *(gray) & *(gray) & *(white)\\ *(gray) & *(white) & *(white) & *(gray) & *(gray) \\ *(gray) & *(white)\\ *(gray)\end{ytableau}
\end{align*}
In fact, the entire shape $[(\lambda+\mu)_{x,s}]$ can be partitioned into columns in such a way that the gray columns constitute $[\lambda_{x,s}]$ and those in white constitute $[\mu_{x,s}]$.  Now for each distinguished pair $(x,s)\in \X$, choose such a partition of the columns, and partition the columns of the other shapes $[\lambda_{x,s'}]$ accordingly, namely if the column $i$ of $[(\lambda+\mu)_{x,s}]$ is colored gray, then the $\sigma(\lambda+\mu)-i+1$ column of $[(\lambda+\mu)_{x,s'}]$ is colored gray as well.  Fixing this partition of the columns, we have that a multitableau of shape $(\lambda+\mu)$ gives rise uniquely to a multitableau of shape $\lambda$ (given by gray columns), and a multitableau of shape $\mu$, and every pair of multitableau of shapes $\lambda$ and $\mu$ determine a filling of $(\lambda+\mu)$ by the same partitioning of the columns.  So indeed $\Delta^{\lambda+\mu}(\underline{e})$ consists of a linear combinations of all products of pairs of multitableau of shapes $\lambda$ and $\mu$.  Furthermore, since the sign is calculated by taking the product of the signs given by reordering columns, it is evident that the sign of the product agrees with the sign in $\Delta^{\lambda+\mu}(\underline{e})$.  
\end{itemize}
\end{proof}


\section{Combinatorics: The Semigroup $\Lambda_{SI}(Q, c, \beta, r)$}\label{sec:COMBINATORICS}
In this section, we determine the structure of the semigroup $\Lambda_{SI}$.  As we have shown above, $$\SI_{Q, c}(\beta, r) \cong \C[\Lambda_{SI}(Q, c, \beta, r)].$$  We will exhibit a grading on $\C[\Lambda_{SI}]$, and show that $\C[\Lambda_{SI}]$ is a polynomial ring over a sub-semigroup ring which we denote by $\C[U]$.  For this section, we fix a quiver $Q$, a coloring $c$, a dimension vector $\beta$, and a maximal rank sequence $r$.  For ease of presentation we will use $\Lambda=\Lambda(Q, c, \beta, r)$ and $\Lambda_{SI}$ similarly.    

\begin{definition} Let $\{\alpha_i^x\}_{\substack{x\in Q_0\phantom{kkkkk}\\ i=1,\dotsc, \beta_x-1}}$ be the simple roots for the group $\SL(\beta)$.  I.e., for $\lambda\in \Lambda$, $\alpha_x^i(\lambda(x,s)):=\lambda(x,s)_i - \lambda(x,s)_{i+1}$
\end{definition}
\begin{proposition}\label{flip-involution}
We have that $\lambda\in \Lambda_{SI}$ if and only if both of the following hold:
\begin{itemize}
\item For every coupled vertex $x$ with $(x,s_1), (x,s_2)\in \X$, and every $i=1,\dotsc \beta_x-1,$ $$\alpha^{x}_i (\lambda(x, s_1)) = \alpha^{x}_{\beta_x-i}(\lambda(x, s_2));$$
\item For every lonely vertex $x$, say $(x, s)\in \X$, $$\alpha^x_i(\lambda(x, s)) = 0.$$
\end{itemize}
\end{proposition}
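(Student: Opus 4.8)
The plan is to deduce Proposition~\ref{flip-involution} directly from Proposition~\ref{prop:cauchy}, which already characterizes membership in $\Lambda_{SI}$ by the existence of a vector $\sigma(\lambda)\in\Z^{Q_0}$ with $\lambda(x,s_1)_i+\lambda(x,s_2)_{\beta_x+1-i}=\sigma(\lambda)_x$ for all $i\in\{1,\dots,\beta_x\}$ at each coupled vertex $x$ (resp.\ $\lambda(x,s)_i=\sigma(\lambda)_x$ for all $i$ at a lonely vertex). The only real input is the elementary observation that a finite sequence of integers is constant if and only if all of its consecutive differences vanish. So I would first pin down the bookkeeping: recall that $\lambda(x,s)$ is an honest length-$\beta_x$ weight, obtained by padding $\lambda(o(x,s))$ and $-\lambda(i(x,s))$ with zeros, so that every index $\lambda(x,s)_i$ with $i\in\{1,\dots,\beta_x\}$ appearing below is legal.

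Next, for a coupled vertex $x$ I would set $f_x(i)=\lambda(x,s_1)_i+\lambda(x,s_2)_{\beta_x+1-i}$ for $i=1,\dots,\beta_x$ and compute, for $i=1,\dots,\beta_x-1$,
\begin{align*}
 f_x(i)-f_x(i+1) &=\bigl(\lambda(x,s_1)_i-\lambda(x,s_1)_{i+1}\bigr)-\bigl(\lambda(x,s_2)_{\beta_x-i}-\lambda(x,s_2)_{\beta_x-i+1}\bigr)\\
 &=\alpha^x_i(\lambda(x,s_1))-\alpha^x_{\beta_x-i}(\lambda(x,s_2)).
\end{align*}
Thus $f_x$ is constant on $\{1,\dots,\beta_x\}$ if and only if $\alpha^x_i(\lambda(x,s_1))=\alpha^x_{\beta_x-i}(\lambda(x,s_2))$ for every $i=1,\dots,\beta_x-1$, which is precisely the first bulleted condition; I would also note that this system is invariant under swapping $s_1$ and $s_2$ (apply $i\mapsto\beta_x-i$), consistent with the fact that the two colours passing through $x$ play interchangeable roles. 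For a lonely vertex the condition of Proposition~\ref{prop:cauchy} literally asserts that $\lambda(x,s)$ is constant, i.e.\ $\alpha^x_i(\lambda(x,s))=\lambda(x,s)_i-\lambda(x,s)_{i+1}=0$ for all $i$, which is the second bullet.

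Finally I would assemble the equivalence. For the forward implication one simply restricts the vector $\sigma(\lambda)$ provided by Proposition~\ref{prop:cauchy}. For the converse, assuming the two bulleted conditions, one defines $\sigma(\lambda)_x$ to be the common value $f_x(1)=\lambda(x,s_1)_1+\lambda(x,s_2)_{\beta_x}$ at a coupled vertex and $\sigma(\lambda)_x=\lambda(x,s)_1$ at a lonely vertex; these are integers, so $\sigma(\lambda)\in\Z^{Q_0}$, and by the constancy just established they satisfy the system of Proposition~\ref{prop:cauchy}, so $\lambda\in\Lambda_{SI}$. I do not expect a genuine obstacle here: the entire content is the reformulation ``constant sequence $\Leftrightarrow$ all telescoping differences vanish''. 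The only points demanding a little care are the zero-padding and boundary indices (the cases $i=1$, $i=\beta_x-1$, and indices exceeding the number of parts of a partition), and checking that the constant produced in the converse direction is genuinely an integer, so that $\sigma(\lambda)$ really lies in $\Z^{Q_0}$; this last remark incidentally pins down $\sigma(\lambda)$ uniquely, which is used implicitly elsewhere (e.g.\ in $\sigma(\lambda+\mu)=\sigma(\lambda)+\sigma(\mu)$).
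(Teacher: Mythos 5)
Your proof is correct and uses the same approach as the paper: both recast Proposition~\ref{prop:cauchy}'s constancy condition as the vanishing of all consecutive differences $f_x(i)-f_x(i+1)$, which is precisely the simple-root identity. Your bookkeeping is in fact slightly tighter than the paper's displayed chain, whose second line has a small index slip (it should read $\lambda(x,s_1)_i+\lambda(x,s_2)_{\beta_x-i+1}=\lambda(x,s_1)_{i+1}+\lambda(x,s_2)_{\beta_x-i}$ to agree with the first line and with Proposition~\ref{prop:cauchy}'s $\beta_x+1-i$ convention); you avoid this and also make explicit the converse direction (defining $\sigma(\lambda)$) and the boundary/zero-padding issues, which the paper leaves implicit.
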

\begin{proof}
Indeed, the equality in the proposition holds if and only if 
\begin{align*}
\lambda(x,s_1(x))_i - \lambda(x, s_1(x))_{i+1} &= \lambda(x, s_2(x))_{\beta_x-i} - \lambda(x, s_2(x))_{\beta_x-i+1}\\
\Leftrightarrow \lambda(x,s_1(x))_i + \lambda(x,s_2(x))_{\beta_x-i} &= \lambda(x, s_1(x))_{i+1} + \lambda(x, s_2(x))_{\beta_x-i+1}\\
\Leftrightarrow \lambda(x, s_1(x))_i + \lambda(x,s_2(x))_{\beta_x-i} &= \lambda(x, s_1(x))_j + \lambda(x,s_2(x))_{\beta_x-j}:=\sigma_x
\end{align*}
This is precisely the set of conditions given by proposition \ref{prop:cauchy}.
 \end{proof}
 
To organize the equations that arise from proposition \ref{flip-involution}, we will set up some notation and define a graph whose vertices are simple roots, with multiplicity.  
 \begin{definition}\label{PEG:definition}\ 
\begin{itemize}
\item[a.] Denote by $\Sigma=\Sigma(Q, c, \beta)$ the set of labeled simple roots $\{\alpha_i^{(x,s)} \mid (x,s)\in \X,\ i=1,\dotsc, \beta_x-1\}$ (namely the simple roots from above but with multiplicity for the colors included).
\item[b.] For each $\lambda \in \Lambda$, define the function $f_\lambda: \Sigma\rightarrow \N$ by $$f_\lambda(\alpha_i^{(x,s)}) :=\alpha_i^{(x,s)}(\lambda(x,s)).$$
\item[c.] Define the \emph{partition equivalence graph}, written $\PEG(Q, c, \beta, r)$ to be the graph with vertices given by the set $\Sigma$ and the following edges:
\begin{itemize}
\item[i.] for each coupled vertex $x\in Q_0$, with associated pair $(x,s_1), (x,s_2)\in \X$ say, and each $i=1,\dotsc, \beta_x-1$, define an edge $\xymatrix{\alpha_i^{(x,s_1)}\ar@{-}[r] & \alpha_{\beta_x-i}^{(x,s_2)}}.$
\item[ii.] for each arrow $a: x \rightarrow y$, and each $i=1,\dotsc, r(a)-1$, define an edge $$\xymatrix{\alpha_i^{(x,s)} \ar@{-}[r] & \alpha_{\beta_y-i}^{(y,s)}}.$$
\end{itemize}
\end{itemize}
\end{definition}
In words, edges of the first type connect labeled simple roots arising from the same $\SL(\beta_x)$, i.e., from the same vertex, and edges of the second type connect simple roots along colors.  For this reason we may call edges of the second type colored edges.    
\begin{proposition}\label{PEG:proposition}
Let $\lambda \in \Lambda$.  Then $\lambda\in \Lambda_{SI}$ if and only if $f_\lambda(\alpha) = f_\lambda(\alpha')$ whenever $\alpha$ and $\alpha'$ are in the same connected component of the PEG and $f_\lambda(\alpha) = 0$ if $\alpha$ corresponds to a root at a lonely vertex.
\end{proposition}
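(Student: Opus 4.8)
The plan is to unwind the definition of the PEG and show that membership in $\Lambda_{SI}$ is exactly a system of equations indexed by the edges of the graph, together with the vanishing conditions at lonely vertices. The starting point is Proposition \ref{flip-involution}, which already encodes $\lambda \in \Lambda_{SI}$ as: (1) for each coupled vertex $x$ with pair $(x,s_1),(x,s_2)$ and each $i$, the equality $\alpha^x_i(\lambda(x,s_1)) = \alpha^x_{\beta_x - i}(\lambda(x,s_2))$; and (2) for each lonely $(x,s)$ and each $i$, $\alpha^x_i(\lambda(x,s)) = 0$. Rewritten through the function $f_\lambda$ of Definition \ref{PEG:definition}(b), condition (1) says precisely $f_\lambda(\alpha_i^{(x,s_1)}) = f_\lambda(\alpha_{\beta_x - i}^{(x,s_2)})$, which is exactly the assertion that $f_\lambda$ is constant across every edge of type (i). So the content of the proposition is really that these vertex-edges must be supplemented by the colored edges of type (ii), and that imposing equality of $f_\lambda$ along each edge individually is equivalent to imposing it along entire connected components.

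First I would argue that the colored edges are already forced. Recall that $\Lambda = \Lambda(Q,c,\beta,r)$ consists of functions $\lambda: Q_1 \to \mathcal{P}$ with $\lambda(a)$ having at most $r(a)$ nonzero parts. For an arrow $a: x \to y$ of color $s$, the partition $\lambda(a)$ appears as $\lambda(o(x,s))$ at the tail and as $\lambda(i(y,s))$ at the head. Hence for $1 \le i \le r(a)-1$ we have the tautology
\begin{align*}
f_\lambda(\alpha_i^{(x,s)}) = \lambda(a)_i - \lambda(a)_{i+1} = f_\lambda(\alpha_i^{(y,s)}),
\end{align*}
but the edge in Definition \ref{PEG:definition}(c)(ii) connects $\alpha_i^{(x,s)}$ to $\alpha_{\beta_y - i}^{(y,s)}$, not to $\alpha_i^{(y,s)}$. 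So I need to check that this is the same labeled root, i.e. that the indexing convention for $\alpha$ at the head vertex $y$ is related to that at $x$ by the reflection $i \mapsto \beta_y - i$ — this is the point where the "flip" in the setup of the coupled-vertex relations and the definition of $[\lambda_{x,s}]$ (Definition \ref{def:bracketnotation}, where $\beta_x - \lambda(i(x,s))$ is adjoined on the left) must be reconciled with the indexing of simple roots. I expect that once the bookkeeping is made explicit, the colored edge simply records the identity $\lambda(a)_i - \lambda(a)_{i+1} = \lambda(a)_i - \lambda(a)_{i+1}$ up to this reindexing, so $f_\lambda$ is automatically constant along every colored edge, for \emph{every} $\lambda \in \Lambda$ — not just those in $\Lambda_{SI}$.

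Granting that, the proof assembles as follows. $(\Rightarrow)$ If $\lambda \in \Lambda_{SI}$, then $f_\lambda$ is constant along every type-(i) edge by Proposition \ref{flip-involution}, and along every type-(ii) edge for free by the previous paragraph; since the PEG's edge set is exactly the union of these two families, $f_\lambda$ is constant along each edge, hence (by transitivity along paths) constant on each connected component. The lonely-vertex condition of Proposition \ref{flip-involution} gives directly that $f_\lambda(\alpha) = 0$ for $\alpha$ at a lonely vertex. $(\Leftarrow)$ Conversely, if $f_\lambda$ is constant on connected components and vanishes at lonely roots, then in particular it is constant along each type-(i) edge and vanishes at lonely vertices, so both conditions of Proposition \ref{flip-involution} hold and $\lambda \in \Lambda_{SI}$. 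The main obstacle, and the only place where care is genuinely needed, is the index-reflection check in the colored-edge argument: one must confirm that $\alpha_{\beta_y - i}^{(y,s)}$ really is the simple root whose value on $\lambda(a)$ equals $\lambda(a)_i - \lambda(a)_{i+1}$, which comes down to tracking how the "reading direction" of a colored path interacts with the Schur-module indexing $S_{\lambda(x,s)}V_x$ with $\lambda(x,s) = (\lambda(o(x,s)), -\lambda(i(x,s)))$. Everything else is a direct translation between the language of Proposition \ref{flip-involution} and the graph language of Definition \ref{PEG:definition}.
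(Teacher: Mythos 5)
You have correctly identified the paper's structure: combine Proposition \ref{flip-involution} (which handles the non-colored edges and the lonely-vertex vanishing) with the observation that $f_\lambda$ is \emph{automatically} constant along colored edges for every $\lambda \in \Lambda$, so that $\lambda \in \Lambda_{SI}$ becomes constancy on connected components of the PEG. But the one nontrivial step --- verifying constancy along colored edges --- is exactly what you flag as ``the main obstacle'' and then do not carry out; you only say you ``expect'' the bookkeeping to work. Worse, the intermediate ``tautology'' you do write,
\[
f_\lambda\bigl(\alpha_i^{(x,s)}\bigr) = \lambda(a)_i - \lambda(a)_{i+1} = f_\lambda\bigl(\alpha_i^{(y,s)}\bigr),
\]
is false: at the head vertex $y$ the arrow $a$ is the \emph{incoming} arrow, so for small $i$ one has $f_\lambda(\alpha_i^{(y,s)}) = \lambda(o(y,s))_i - \lambda(o(y,s))_{i+1}$, which involves the \emph{outgoing} arrow at $y$ (a different arrow, or nothing), not $a$. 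Your instinct that the reflection $i \mapsto \beta_y - i$ is what reconciles the two sides is right, but it is a genuine computation, not a reindexing tautology, and it is the only real content of the proof.

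The verification is short and lives exactly where you point, namely in the convention $\lambda(y,s) = (\lambda(o(y,s)), -\lambda(i(y,s)))$, whose last entries are $-\lambda(a)_{r(a)}, \dotsc, -\lambda(a)_1$ when $a = i(y,s)$. Hence for $1 \le i \le r(a)-1$ one has $\lambda(y,s)_{\beta_y - i} = -\lambda(a)_{i+1}$ and $\lambda(y,s)_{\beta_y - i + 1} = -\lambda(a)_i$, so
\[
f_\lambda\bigl(\alpha_{\beta_y - i}^{(y,s)}\bigr) = \lambda(y,s)_{\beta_y - i} - \lambda(y,s)_{\beta_y - i + 1} = \lambda(a)_i - \lambda(a)_{i+1} = f_\lambda\bigl(\alpha_i^{(x,s)}\bigr),
\]
which is precisely constancy along the colored edge of Definition \ref{PEG:definition}(c)(ii). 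With this filled in, the rest of your argument --- split the PEG's edges into types (i) and (ii), invoke Proposition \ref{flip-involution} for type (i) and the lonely case, conclude via transitivity on connected components --- is correct and coincides with the paper's proof.
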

\begin{proof} 
Let $a\in Q_1$ be an arrow of color $s$ with $ta=x$ and $ha=y$.  Then $\lambda\in \Lambda$ implies that $f_\lambda\left(\alpha_i^{(x,s)}\right) =f_\lambda\left(\alpha_{\beta_y-i}^{(y,s)}\right)$, i.e., $f_\lambda(\alpha)=f_\lambda(\alpha')$ whenever $\alpha, \alpha'$ are connected by a colored edge.  This is so because if $\lambda \in \Lambda$, then 
\begin{align*}
f_\lambda\left(\alpha_i^{(x, s)}\right) &= \lambda(x, s)_i - \lambda(x, s)_{i+1}\\ &= \lambda(a)_i - \lambda(a)_{i+1}\\ &= (-\lambda(a)_{i+1}) - (-\lambda(a)_i) \\ &= \lambda(y, s)_{\beta_y-i} - \lambda(y,s)_{\beta_y-i+1}\\
	&= f_\lambda\left(\alpha_{\beta_y-i}^{(y,s)}\right).
	\end{align*}
But proposition \ref{flip-involution} shows that $M_\lambda$ contains a semi-invariant if and only if $f_\lambda(\alpha)=f_\lambda(\alpha')$ whenever $\alpha, \alpha'$ are linked by an edge of type (i).  Therefore, $\lambda\in \Lambda_{SI}$ if and only if equality holds for all roots in the same connected component. 
\end{proof}

\begin{proposition}\label{prop:KGrading}
Let $K_1,\dotsc, K_l$ be the list of connected components in $\PEG(Q, c, \beta, r)$, and let $\{\alpha(i)\}_{i=1,\dotsc, l}$ be some set of elements in $\Sigma$ such that the vertex corresponding to $\alpha(i)$ is in the component $K_i$ for each $i$.  For any vector $g=(g_1,\dotsc, g_l) \in \N^l$, let $V_g$ be the vector space with basis $\{x_\lambda\mid \lambda \in \Lambda_{SI},\ f_\lambda \alpha(i) = g_i\}$.  Then $$\C[\Lambda_{SI}] = \bigoplus\limits_{g\in \N^K} V_g$$ is a graded direct sum decomposition of the semigroup ring $\C[\Lambda_{SI}]$.  In other words, $\C[\Lambda_{SI}]$ has a multigrading by the connected components of $\PEG(Q, c, \beta, r)$.  
\end{proposition}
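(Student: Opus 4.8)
The plan is to recognize the asserted decomposition as the grading on the semigroup ring $\C[\Lambda_{SI}]$ coming from a homomorphism of commutative monoids $\Lambda_{SI} \to \N^{l}$; once such a homomorphism is in hand the rest is formal. Recall that $\C[\Lambda_{SI}]$ has $\C$-basis $\{x_\lambda \mid \lambda \in \Lambda_{SI}\}$ with multiplication $x_\lambda \cdot x_\mu = x_{\lambda + \mu}$, where $\lambda + \mu$ is the sum of Definition \ref{def:partitionsum}; by the corollary following Proposition \ref{prop:cauchy} this sum again lies in $\Lambda_{SI}$, and together with the all-empty collection as identity this makes $(\Lambda_{SI}, +)$ a monoid.

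First I would define $\deg \colon \Lambda_{SI} \to \N^{l}$ by $\deg(\lambda) = \bigl(f_\lambda(\alpha(1)), \dots, f_\lambda(\alpha(l))\bigr)$, which lands in $\N^{l}$ because each $\lambda(x,s)$ is weakly decreasing in the relevant block, so each $f_\lambda(\alpha_i^{(x,s)}) = \lambda(x,s)_i - \lambda(x,s)_{i+1} \geq 0$. The only computation required is that $f$ is additive in $\lambda$: by Definition \ref{def:partitionsum} one has $(\lambda + \mu)(a)_j = \lambda(a)_j + \mu(a)_j$ for every arrow $a$, and $f_\lambda(\alpha_i^{(x,s)})$ is a fixed integer-linear expression in such entries (as computed in the proof of Proposition \ref{PEG:proposition}), so $f_{\lambda+\mu} = f_\lambda + f_\mu$ on $\Sigma$ and hence $\deg(\lambda+\mu) = \deg(\lambda)+\deg(\mu)$; also $\deg$ sends the identity to $0$. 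Taking $V_g$ to be the $\C$-linear span of $\{x_\lambda \mid \deg(\lambda) = g\}$ --- precisely the $V_g$ of the statement --- the basis $\{x_\lambda\}$ partitions into the $V_g$, giving $\C[\Lambda_{SI}] = \bigoplus_{g \in \N^{l}} V_g$ as $\C$-vector spaces, and $x_\lambda x_\mu = x_{\lambda+\mu}$ together with additivity of $\deg$ yields $V_g \cdot V_h \subseteq V_{g+h}$. This is the asserted multigrading.

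Finally I would observe that the grading does not depend on the choice of representatives $\alpha(i)$: for $\lambda \in \Lambda_{SI}$, Proposition \ref{PEG:proposition} says $f_\lambda$ is constant on each connected component of $\PEG(Q, c, \beta, r)$ (and identically $0$ on components attached to lonely vertices), so $\deg(\lambda)_i$ just records the common value of $f_\lambda$ on $K_i$. I do not expect any genuine obstacle here; the proposition is essentially a bookkeeping statement, and the only inputs needing care are the ones quoted above --- closure of $\Lambda_{SI}$ under $+$, and the fact that $f_\lambda(\alpha_i^{(x,s)})$ is well defined and depends linearly on the partitions $\lambda(a)$.
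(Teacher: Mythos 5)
Your proof is correct and follows the same route the paper takes (which simply cites the semigroup structure of $\Lambda_{SI}$ and Proposition \ref{PEG:proposition}); you have just unpacked the "follows immediately" into the additivity of $\lambda \mapsto f_\lambda$ and closure of $\Lambda_{SI}$ under $+$. No issues.
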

This follows immediately from the description of the semigroup structure of $\Lambda_{SI}$ above and proposition \ref{PEG:proposition}.  

\begin{definition}
Let $E=E_{Q, c}(\beta, r)$ be the set of elements in $\Sigma$ whose corresponding vertices are endpoints for the PEG associated to $(Q, c, \beta, r)$.  For an element $e\in E$ which is contained in the string, write $\Theta(e)$ for the distinct second endpoint contained in this string (we do not consider an isolated vertex to be a string).  Clearly $\Theta:E \rightarrow E$ is an involution.
\end{definition}
In fact, we can explicitly describe $E$.
\begin{proposition}
Each endpoint of the PEG is of one of the following two mutually exclusive forms:
\begin{itemize}
\item[I.] if $x$ is coupled and $(x, s)\in \X$, then $\alpha_i^{(x, s)}$ is an endpoint for $r(o(x,s))\leq i \leq \beta_x-r(i(x,s))$;
\item[II.] if $x$ is lonely and $(x, s)\in \X$, then $\alpha_i^{(x, s)}$ is an endpoint for $1\leq i \leq \beta_x$.
\end{itemize}
\end{proposition}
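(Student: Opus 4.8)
The plan is to compute the degree of an arbitrary vertex $v=\alpha_i^{(x,s)}$ of $\PEG(Q,c,\beta,r)$ directly from Definition~\ref{PEG:definition}, counting separately its edges of type (i) and of type (ii); the description of the endpoints (vertices of degree at most $1$) then drops out. The only ingredient beyond bookkeeping is the rank-sequence inequality $r(i(x,s))+r(o(x,s))\le\beta_x$, together with the convention $r(\emptyset)=0$ and the fact that a vertex of $Q$ is either lonely or coupled but never both.

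First I would count the type-(i) edges at $v=\alpha_i^{(x,s)}$. If $x$ is lonely there are none. If $x$ is coupled, with the two elements of $\X$ over $x$ being $(x,s)$ and $(x,s')$, then since $1\le i\le\beta_x-1$ also $1\le\beta_x-i\le\beta_x-1$, so there is exactly one type-(i) edge at $v$, namely the one joining $\alpha_i^{(x,s)}$ to $\alpha_{\beta_x-i}^{(x,s')}$ (and it is not a loop since $s\ne s'$). Next, a type-(ii) edge at $v$ is produced either by the arrow $o:=o(x,s)$ leaving $x$, which happens exactly when $o\ne\emptyset$ and $i\le r(o)-1$, or by the arrow $\iota:=i(x,s)$ entering $x$, which happens exactly when $\iota\ne\emptyset$ and $\beta_x-i\le r(\iota)-1$, i.e.\ $i\ge\beta_x-r(\iota)+1$.

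The key point is that these last two conditions are mutually exclusive: if both held, then $\beta_x-r(\iota)+1\le i\le r(o)-1$, which forces $r(o)+r(\iota)\ge\beta_x+2$, contradicting $r(\iota)+r(o)\le\beta_x$ (valid because $r$ is a rank sequence). Hence $v$ has at most one type-(ii) edge. Assembling the count: when $x$ is lonely, $\deg v$ equals the number of type-(ii) edges, so $\deg v\le1$ and \emph{every} root $\alpha_i^{(x,s)}$ at a lonely vertex is an endpoint --- this is case II. When $x$ is coupled, $\deg v=1+(\text{number of type-(ii) edges})$, so $v$ is an endpoint if and only if it has no type-(ii) edge, i.e.\ if and only if both $i\ge r(o(x,s))$ and $i\le\beta_x-r(i(x,s))$ --- this is case I. Finally, the two cases are mutually exclusive because a vertex $x\in Q_0$ cannot be simultaneously lonely and coupled. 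The main (indeed only) obstacle is the mutual exclusivity of the incoming and outgoing type-(ii) edges established above; this is exactly where the hypothesis that $r$ is a rank sequence enters, and the remainder is careful accounting with the index conventions.
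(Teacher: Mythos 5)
Your argument is correct and is essentially the same degree-counting argument the paper gives: count the non-colored (type-(i)) edges and colored (type-(ii)) edges at $\alpha_i^{(x,s)}$ separately, observe that a coupled vertex contributes exactly one non-colored edge and a lonely vertex none, and then identify which $i$ admit a colored edge. You are somewhat more careful than the paper's sketch at one point: the paper asserts ``by definition'' that each root lies on at most one colored edge, whereas you explicitly derive this from the rank-sequence inequality $r(i(x,s))+r(o(x,s))\le\beta_x$; this is precisely the step that makes case~II (all roots at a lonely vertex are endpoints) actually work, so spelling it out is a worthwhile improvement rather than a divergence.
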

\begin{proof}
This is a consequence of the definition \ref{PEG:definition}.  We will call the edges that connect roots on the same vertex of the quiver non-colored, and those that connect roots on different vertices of the quiver colored.  If $x$ is lonely then there can only possibly be colored edges containing any of the elements $\alpha_i^{(x, s)}$, and by definition, each vertex can be contained in at most one such.  If, however, $x$ is coupled and $(x, s)\in \X$, then each vertex $\alpha_i^{(x, s)}$ is incident to precisely one non-colored edge.  Those with $i<r(o(x,s))$ or $i>\beta_x-r(i(x,s))$ are also incident to a colored edge by definition.  For $r(o(x,s))\leq i \leq \beta_x-r(i(x,s))$, there are no colored edges incident to $\alpha^{(x,s)}_i$.
\end{proof} 
We will use the endpoints of the strings to find a system of equations so that each positive integer-valued solution of the system will correspond to an element $\lambda \in \Lambda_{SI}$.  
\begin{remark}
Below lists the endpoints in $\{\alpha^i_{x,s}\}_{i=1,\dotsc, \beta_x}$ and calculates the values of $f_\lambda$ on such endpoints.  In order to write the system of equations mentioned above in a compact form, we also label these possibilities:
\begin{itemize}
\item[a.] If $r(o(x,s)) + r(i(x,s)) = \beta_x$ for some $(x,s) \in \X$, then $\alpha_{r(o(x,s))}^{(x,s)}$ is the unique endpoint in this set.  For this endpoint, we have \begin{align*}f_\lambda\left(\alpha_{r(o(x,s))}^{(x,s)}\right) = \lambda(o(x,s))_{r(o(x,s))} + \lambda(i(x,s))_{r(i(x,s))}.\end{align*}  We will denote this endpoint by $(o(x,s), i(x,s))$.
\item[b.] If $r(o(x,s))+r(i(x,s))<\beta_x$ for some $(x,s)\in \X$, then $\alpha_{r(o(x,s))}^{(x,s)}$ is an endpoint, and \begin{align*}f_\lambda \left(\alpha_{r(o(x,s))}^{(x,s)}\right) = \lambda(o(x,s))_{r(o(x,s))}.\end{align*}  We will denote this endpoint by the arrow $o(x,s)$.
\item[c.] If $r(o(x,s))+r(i(x,s))<\beta_x$ for some $(x,s)\in \X$, then $\alpha_{\beta_x-r(i(x,s))}^{(x,s)}$ is an endpoint, and \begin{align*}f_\lambda \left(\alpha_{\beta_x-r(i(x,s))}^{(x,s)}\right) = \lambda(i(x,s))_{r(i(x,s))}.\end{align*}  Such an endpoint will be denoted by the arrow $i(x,s)$.
\item[d.] Finally, if $r(o(x,s))<i<\beta_x-r(i(x,s))$, or $(x,s)$ has no mirror and $i\neq r(o(x,s))$, $i\neq \beta_x-r(i(x,s))$, then \begin{align*}f_\lambda\left(\alpha_i^{(x,s)}\right) = 0.\end{align*}  Such endpoints will be denoted by the symbol $0_i^{(x,s)}$.  
\end{itemize}
Thus, an endpoint can be of type Ia, Ib, Ic, Id, or type IIa, IIb, IIc, IId.  
\end{remark}
We will illustrate the these possibilities by means of an example.  Recall that example \ref{examples-CSA} $\star$ was the following colored string algebra: 
\begin{align*}
\xymatrix@C=12ex{
			&			& 3 \ar@{->}[dr] && \\
1 \ar@<-.7ex>@{~>}[r] \ar@<.7ex>@{..>}[r] & 2 \ar@{~>}[ur] \ar@{..>}[rr] && 4 \ar@<.7ex>@{..>}[r] \ar@<-.7ex>@{->}[r]& 5 }
\end{align*}
Consider $\Rep_{Q, C}(\beta, r)$ with $\beta, r$ as indicated in the diagram 
\begin{align*}
\xymatrix@C=12ex{
			&			& *+[F]{2} \ar@{->}[dr]|{2} && \\
*+[F]{2}\ar@<-.7ex>@{~>}[r]|{2} \ar@<.7ex>@{..>}[r]|{2} & *+[F]{6} \ar@{~>}[ur]|{2} \ar@{..>}[rr]|{2} && *+[F]{4} \ar@<.7ex>@{..>}[r]|{2} \ar@<-.7ex>@{->}[r]|{2}& *+[F]{2} }
\end{align*}
The PEG is given below:
\begin{align*}
\includegraphics[width=17cm]{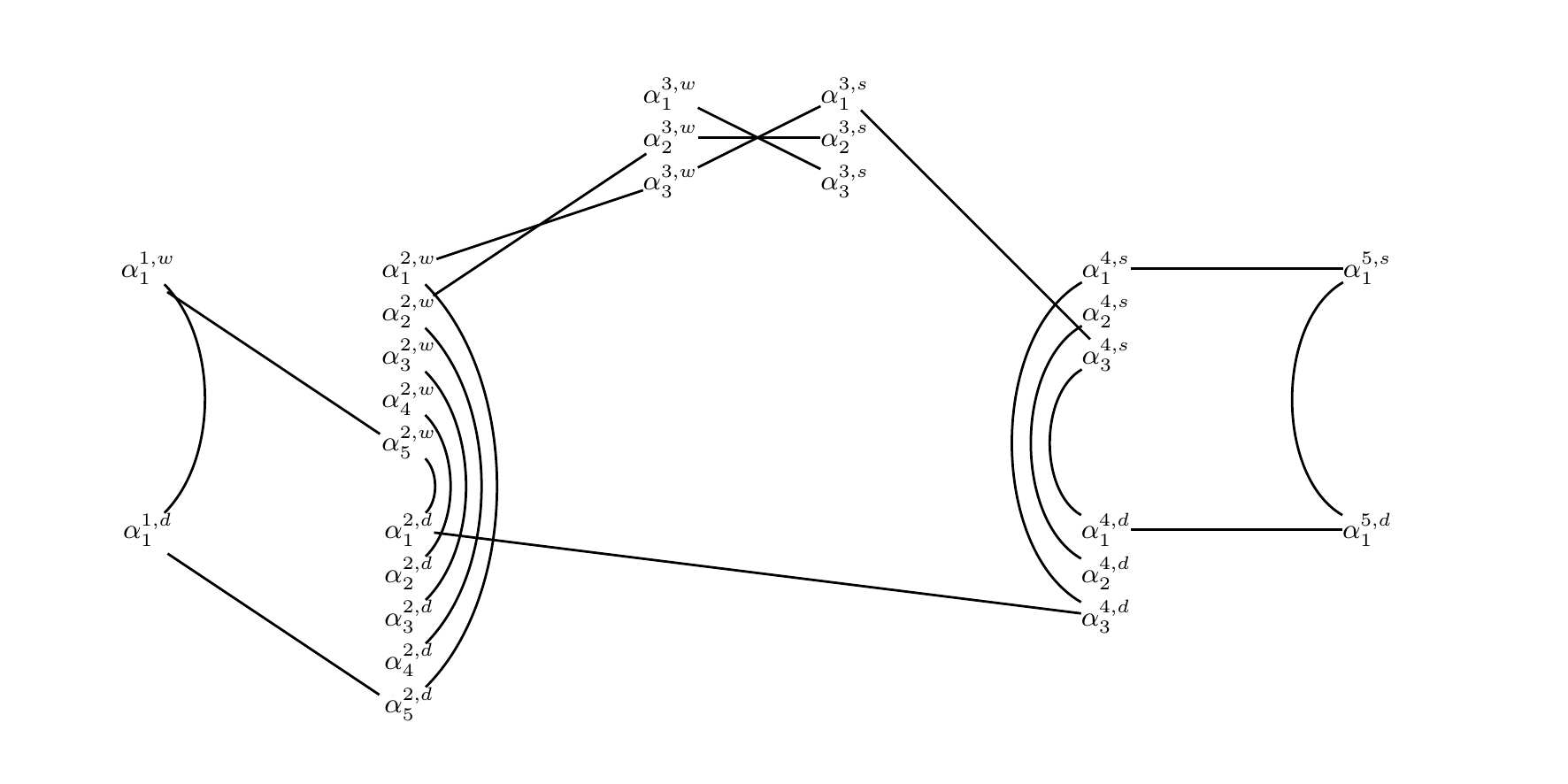}
\end{align*}

\begin{definition}
For any $\lambda\in \Lambda$, define $u_\lambda: Q_1 \rightarrow \N$ to be the function $u_\lambda(a) = \lambda(a)_{r(a)}$.  For any function $u:Q_1\rightarrow \N$, let $\varphi_u:E \rightarrow \N$ be the function defined as follows:
\begin{align*}
\varphi_u(e)&=\begin{cases} u(i(x,s)) + u(o(x,s)) & \textrm{ if the endpoint } e \textrm{ is of type (Ia) and labeled } (o(x,s),i(x,s))\\
 u(o(x,s)) & \textrm{ if the endpoint } e \textrm{ is of type (Ib) and labeled } o(x,s)\\
u(i(x,s))  & \textrm{ if the endpoint } e \textrm{ is of type (Ic) and labeled } i(x,s)\\
 0 & \textrm{ if the endpoint } e \textrm{ is of type (Id) or (II)}. \end{cases}
 \end{align*}
 We call $\varphi_u$ the \emph{companion function} to $u$.
 \end{definition}

We will denote by $U=U(Q,c,\beta,r)$ the set of functions $u:Q_1\rightarrow \N$ such that $\varphi_u(e)=\varphi_u(\Theta(e))$ for all $e\in E$.  Notice that $U$ is a semigroup with respect to the usual addition of functions.  

\begin{proposition}
If $\lambda \in \Lambda_{SI}$ then $u_\lambda\in U(Q, C, \beta, r)$.
\end{proposition}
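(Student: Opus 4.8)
The plan is to show directly that if $\lambda \in \Lambda_{SI}$, then the companion function $\varphi_{u_\lambda}$ takes equal values on $\Theta$-paired endpoints, which is exactly the membership condition for $U$. The starting point is Proposition~\ref{PEG:proposition}: since $\lambda \in \Lambda_{SI}$, the function $f_\lambda$ is constant on each connected component of $\PEG(Q,c,\beta,r)$, and vanishes on any root arising from a lonely vertex. For a string (i.e.\ a connected component that is a path) with endpoints $e$ and $\Theta(e)$, this gives in particular $f_\lambda(e) = f_\lambda(\Theta(e))$ when we regard $e$ and $\Theta(e)$ as the labeled simple roots $\alpha^{(x,s)}_?$ they name. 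So the whole argument reduces to checking that for each of the endpoint types (Ia), (Ib), (Ic), (Id), (II), the value $f_\lambda$ assigns to that endpoint-root agrees with the value $\varphi_{u_\lambda}$ assigns to the corresponding labeled endpoint in $E$.

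The key computation is thus a case-by-case identification of $f_\lambda$ on endpoints with $\varphi_{u_\lambda}$ on endpoints, using $u_\lambda(a) = \lambda(a)_{r(a)}$. I would proceed through the cases in the order they are listed in the Remark preceding the statement:

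\begin{itemize}
\item[(Ia)] Here $r(o(x,s)) + r(i(x,s)) = \beta_x$, and the unique endpoint is $\alpha^{(x,s)}_{r(o(x,s))}$, for which the Remark records $f_\lambda\bigl(\alpha^{(x,s)}_{r(o(x,s))}\bigr) = \lambda(o(x,s))_{r(o(x,s))} + \lambda(i(x,s))_{r(i(x,s))} = u_\lambda(o(x,s)) + u_\lambda(i(x,s))$, which is precisely $\varphi_{u_\lambda}(e)$ by the definition of the companion function on type (Ia).
\item[(Ib), (Ic)] Similarly $f_\lambda$ on the endpoint $\alpha^{(x,s)}_{r(o(x,s))}$ (resp.\ $\alpha^{(x,s)}_{\beta_x - r(i(x,s))}$) equals $\lambda(o(x,s))_{r(o(x,s))} = u_\lambda(o(x,s))$ (resp.\ $\lambda(i(x,s))_{r(i(x,s))} = u_\lambda(i(x,s))$), matching $\varphi_{u_\lambda}$ on types (Ib), (Ic).
\item[(Id), (II)] In these cases $f_\lambda$ vanishes on the endpoint by the Remark, and $\varphi_{u_\lambda}$ is defined to be $0$ on endpoints of these types, so the two agree as well.
\end{itemize}

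Now assemble: take any $e \in E$ contained in a string. Then $\Theta(e)$ is the other endpoint of that string, and $e$ together with $\Theta(e)$ lie in a common connected component of the PEG, so $f_\lambda(e) = f_\lambda(\Theta(e))$ by Proposition~\ref{PEG:proposition}. By the case analysis above, $\varphi_{u_\lambda}(e) = f_\lambda(e)$ and $\varphi_{u_\lambda}(\Theta(e)) = f_\lambda(\Theta(e))$, hence $\varphi_{u_\lambda}(e) = \varphi_{u_\lambda}(\Theta(e))$. This is exactly the defining condition for $u_\lambda \in U(Q,c,\beta,r)$. (For an isolated-vertex endpoint, which is not part of a string, there is nothing to check, and indeed such a root comes from a lonely vertex so $f_\lambda$ vanishes there; the type (II) and (Id) endpoints on strings are handled above.)

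The main obstacle is bookkeeping rather than conceptual: one must be careful that the labels "$o(x,s)$", "$i(x,s)$", "$(o(x,s),i(x,s))$" used to index $E$ really do correspond to the endpoint-roots $\alpha^{(x,s)}_{r(o(x,s))}$ and $\alpha^{(x,s)}_{\beta_x - r(i(x,s))}$ as claimed in the Remark, and that the indices $\lambda(a)_{r(a)}$ line up correctly with the components of $\lambda(x,s)$ on both sides of a colored edge (recall $\lambda(x,s)$ restricted along a color is read in one direction as $\lambda(o(x,s))$ and in the other, with a sign, as $-\lambda(i(x,s))$). Once the index translations in the Remark are taken as given, the proof is the short chain of equalities assembled above.
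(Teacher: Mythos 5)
Your proof is correct and takes essentially the same route as the paper: invoke Proposition~\ref{PEG:proposition} (constancy of $f_\lambda$ on PEG components, vanishing on roots at lonely vertices) and match $f_\lambda$ with $\varphi_{u_\lambda}$ on each endpoint type. The paper's own proof is a one-line appeal to that proposition plus the lonely-vertex vanishing; you have simply written out the case-by-case verification that the paper calls "clear."
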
 
\begin{proof}
This is clear from proposition \ref{PEG:proposition}, together with the fact that if $\lambda \in \Lambda_{SI}$, and $x$ is a lonely vertex, $(x,s)\in \X$, then $f_\lambda(\alpha_i^{(x,s)})=0$ for $i=1,\dotsc, \beta_x$.
\end{proof}  
Notice that from $u_\lambda$ one can calculate the values of $f_\lambda(\alpha)$ whenever $\alpha\in \Sigma$ is in a string of the PEG.
\begin{definition} Denote by $Y=Y(Q, c, \beta, r)$ the set of maps $y:\{\textrm{bands in } \Sigma\} \rightarrow \N$.  For any $u\in U$ and $y\in Y$, take $\lambda_{u,y}:Q_1 \rightarrow \Pp$ to be the map defined by the following conditions:
\begin{align*}
\lambda_{u,y}(a)_{r(a)}&=u(a)\\
\alpha(\lambda_{u,y}) &= \begin{cases} \varphi_u(e) & \textrm{ if } e \textrm{ is an endpoint of the string containing } \alpha\\ y(b) & \textrm{ if } \alpha \textrm{ is contained in the band } b. \end{cases}
\end{align*} 
\end{definition}

\begin{remark}
Let us summarize the results above:
\begin{itemize}
\item[i.] The set $U$ is a semigroup with respect to the usual addition of functions,
\item[ii.] $\lambda_{u,y}(a)$ has at most $r(a)$ non-zero parts, so $\lambda_{u,y}\in \Lambda$,
\item[iii.] $\operatorname{image}( (u,y)\mapsto \lambda_{u,y} ) \subset \Lambda_{SI}$.
\end{itemize}
\end{remark}

\begin{proposition}
The map $(u,y)\mapsto \lambda_{u,y}$ is a semigroup isomorphism between $U\times Y$ and $\Lambda_{SI}$.
\end{proposition}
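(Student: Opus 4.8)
The plan is to show that the map $(u,y)\mapsto \lambda_{u,y}$ and the map $\lambda \mapsto (u_\lambda, y_\lambda)$ (where $y_\lambda$ records the value of $f_\lambda$ on each band) are mutually inverse semigroup homomorphisms between $U\times Y$ and $\Lambda_{SI}$. Most of the hard work has already been front-loaded: the preceding propositions establish that $\lambda_{u,y}\in\Lambda_{SI}$, that $u_\lambda\in U$ when $\lambda\in\Lambda_{SI}$, and (via Proposition~\ref{PEG:proposition}) that an element of $\Lambda_{SI}$ is completely determined by the constant value of $f_\lambda$ on each connected component of the PEG together with the ``boundary'' data $u_\lambda$. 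So the proof is mostly a bookkeeping argument organized around the two kinds of connected components: \emph{strings} (components with endpoints) and \emph{bands} (components without endpoints, i.e.\ cycles or bi-infinite paths --- here, since everything is finite, cycles).

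First I would set up the reverse map. Given $\lambda\in\Lambda_{SI}$, define $u_\lambda(a)=\lambda(a)_{r(a)}$ as already introduced, and define $y_\lambda:\{\text{bands}\}\to\N$ by $y_\lambda(b)=f_\lambda(\alpha)$ for any $\alpha$ in the band $b$; this is well-defined since $f_\lambda$ is constant on connected components by Proposition~\ref{PEG:proposition}, and it is non-negative because $f_\lambda(\alpha)=\alpha(\lambda(x,s))=\lambda(x,s)_i-\lambda(x,s)_{i+1}\geq 0$ as $\lambda(x,s)$ is a partition. One checks $u_\lambda\in U$ by the proposition already cited, so $(u_\lambda,y_\lambda)\in U\times Y$. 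Next I would verify that $\lambda\mapsto(u_\lambda,y_\lambda)$ is additive: this is immediate since $(\lambda+\mu)(a)_{r(a)}=\lambda(a)_{r(a)}+\mu(a)_{r(a)}$ by Definition~\ref{def:partitionsum}, and $f_{\lambda+\mu}=f_\lambda+f_\mu$ because each $f$ is a sum of differences of partition parts which add coordinatewise. Additivity of $(u,y)\mapsto\lambda_{u,y}$ is similarly routine once one observes that all the defining data of $\lambda_{u,y}$ --- namely $\lambda_{u,y}(a)_{r(a)}=u(a)$ and the $f$-values on components given by $\varphi_u$ on strings and $y$ on bands --- are additive in $(u,y)$, and $\varphi_u$ is additive in $u$ by inspection of its case definition.

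The crux is the two inversion identities. For $(u_{\lambda_{u,y}},y_{\lambda_{u,y}})=(u,y)$: the first coordinate is immediate from the defining condition $\lambda_{u,y}(a)_{r(a)}=u(a)$, and the second follows because for a band $b$, any $\alpha$ in $b$ has $f_{\lambda_{u,y}}(\alpha)=y(b)$ by construction. For $\lambda_{(u_\lambda,y_\lambda)}=\lambda$: here I would argue that both sides are elements of $\Lambda_{SI}$ with the same value of $f$ on every vertex of the PEG and the same value $\lambda(a)_{r(a)}$ for every arrow $a$, and then invoke the fact (which I would make explicit) that this data determines $\lambda(a)$ as a partition uniquely --- indeed, knowing $\lambda(a)_{r(a)}$ and all the successive differences $\lambda(a)_i-\lambda(a)_{i+1}=f_\lambda(\alpha_i^{(\cdot,\cdot)})$ for $i<r(a)$ recovers every part $\lambda(a)_i$ by telescoping, and $\lambda(a)_i=0$ for $i>r(a)$ since $\lambda\in\Lambda$ forces at most $r(a)$ non-zero parts. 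The only subtlety is to confirm that the $f$-values produced by $\lambda_{u_\lambda,y_\lambda}$ on \emph{string} endpoints are $\varphi_{u_\lambda}$ and that these match $f_\lambda$ on those endpoints; but that is exactly the content of the companion-function definition combined with the explicit description of endpoints in the Remark following the endpoint classification, where each of types Ia, Ib, Ic was set up precisely so that $f_\lambda$ at that endpoint equals the corresponding value of $\varphi_{u_\lambda}$, and types Id, II give $0$.

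The main obstacle I expect is not any single deep step but rather making the ``$f$-values plus boundary data determine $\lambda$'' principle watertight across all the endpoint types and both the lonely and coupled cases simultaneously --- in particular checking that the non-negativity and compatibility constraints are automatically satisfied, so that $\lambda_{u,y}$ really does land in $\Lambda_{SI}$ and not merely in some formal completion. Since the earlier remarks already assert (ii) $\lambda_{u,y}\in\Lambda$ and (iii) $\lambda_{u,y}\in\Lambda_{SI}$, I can lean on those, and the argument reduces to the telescoping-uniqueness observation plus a careful case check that the endpoint $f$-values agree; I would present this as a short lemma-free paragraph rather than a long casework table, pointing to the labeled cases (Ia)--(Id) and (IIa)--(IId) in the Remark as the verification.
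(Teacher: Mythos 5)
Your proposal matches the paper's argument: both construct the explicit inverse $\lambda\mapsto(u_\lambda,y_\lambda)$ with $u_\lambda(a)=\lambda(a)_{r(a)}$ and $y_\lambda(b)=f_\lambda(\alpha)$ for $\alpha$ in the band $b$, then check the two composites and the additivity. The paper dispatches these checks as ``routine,'' while you fill in the telescoping recovery of $\lambda(a)$ from $\lambda(a)_{r(a)}$ and the successive differences $f_\lambda(\alpha_i)$, together with the case-by-case agreement of $\varphi_{u_\lambda}$ with $f_\lambda$ at string endpoints --- a useful expansion of the same underlying argument.
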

\begin{proof}
We construct an inverse explicitly.  For any $\lambda\in \Lambda_{SI}$, define $(u_\lambda, y_\lambda)$ as follows:
\begin{align*}
u_\lambda(a)&:= \lambda(a)_{r(a)}\\
y_\lambda(b)&:= f_\lambda (\alpha) \textrm{ for any } \alpha \textrm{ in the band } b.
\end{align*}
It is routine that $u_{\lambda(u,y)} = u$ and $y_{\lambda(u,y)} = y$, so this is indeed a bijection, and it is clear that the composition operation in $U\times Y$ is preserved under this map.
\end{proof}
\begin{corollary}
We have the following ring isomorphism $$\SI_{Q, c}(\beta, r) \cong \C[U(Q, c, \beta, r)] [y_b]_{b\in \{ \textrm{bands in } \Sigma\}},$$ that is, $\SI_{Q, c}(\beta, r)$ is a polynomial ring over the semigroup ring $\C[U]$. 
\end{corollary}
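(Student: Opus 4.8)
The plan is to assemble the corollary from the structural results already established, so that essentially no fresh computation is required. I would begin from the theorem identifying $\SI_{Q, c}(\beta, r)$ with the semigroup ring $\C[\Lambda_{SI}(Q, c, \beta, r)]$. To this I would apply the semigroup isomorphism $\Lambda_{SI} \cong U(Q, c, \beta, r) \times Y(Q, c, \beta, r)$ given by $(u, y) \mapsto \lambda_{u,y}$; since the assignment $S \mapsto \C[S]$ carries isomorphic commutative monoids to isomorphic rings, this yields $\C[\Lambda_{SI}] \cong \C[U \times Y]$.

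Next I would invoke the standard fact that for commutative monoids $S_1, S_2$ there is an algebra isomorphism $\C[S_1 \times S_2] \cong \C[S_1] \otimes_\C \C[S_2]$, taking the basis vector indexed by a pair $(s_1, s_2)$ to the tensor of the basis vectors indexed by $s_1$ and $s_2$; this follows at once by comparing bases and multiplication rules. With $S_1 = U$ and $S_2 = Y$ this gives $\C[\Lambda_{SI}] \cong \C[U] \otimes_\C \C[Y]$.

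It then remains to recognize $\C[Y]$ as a polynomial ring. By definition $Y$ is the set of all maps $y : \{\textrm{bands in } \Sigma\} \to \N$ under pointwise addition, i.e., the free commutative monoid on the finite set of bands of $\PEG(Q, c, \beta, r)$. Hence $\C[Y]$ is the polynomial ring $\C[y_b]_{b \in \{\textrm{bands in } \Sigma\}}$, with $y_b$ the generator corresponding to the band $b$. Substituting, $\SI_{Q, c}(\beta, r) \cong \C[U] \otimes_\C \C[y_b]_{b} \cong \C[U][y_b]_{b \in \{\textrm{bands in } \Sigma\}}$, as claimed.

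The proof is therefore just a short chain of identifications, and there is no genuine obstacle: all of the substantive work — the passage from $\SI_{Q, c}(\beta, r)$ to a semigroup ring, and the splitting $\Lambda_{SI} \cong U \times Y$ — has already been carried out. The only points I would flag explicitly are that $Y$ is finitely generated and free (so that its semigroup ring really is a polynomial ring in finitely many variables, $\Sigma$ being finite), and that this decomposition refines the multigrading of Proposition~\ref{prop:KGrading}: the polynomial generators $y_b$ occupy the graded components of $\C[\Lambda_{SI}]$ indexed by the band connected components of the PEG, while $\C[U]$ accounts for the components containing an endpoint.
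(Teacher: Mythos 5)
Your argument is correct and is exactly the route the paper intends: the corollary is stated immediately after the semigroup isomorphism $U\times Y \cong \Lambda_{SI}$ with no separate proof, precisely because it follows from that isomorphism together with the theorem $\SI_{Q,c}(\beta,r)\cong\C[\Lambda_{SI}]$ and the standard identifications $\C[S_1\times S_2]\cong\C[S_1]\otimes_\C\C[S_2]$ and $\C[\N^{(\textrm{finite set})}]\cong$ polynomial ring. You have simply made explicit the steps the paper leaves tacit.
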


\begin{proposition}\label{prop:generalUproperties}
The semigroup $U(Q, C, \beta, r)$ is a sub-semigroup of $\N^{Q_1}$, satisfying the following:
\begin{itemize}
\item[a.] $U(Q, C, \beta, r)=\{(u_a)_{a\in Q_1} \in \N^{Q_1}\mid \varphi_{u}(e) = \varphi_u(\Theta(e)) \textrm{ for } e \in E\}$,
\item[b.] $\varphi_u(e) = \sum\limits_{a\in Q_1} c_a^e u_a$ with $c_a^e\in \{0,1\}$ for each endpoint $e\in E$,
\item[c.] $u_a$ appears with nonzero coefficient in at most two functions $\varphi_u$. I.e., for each $a\in Q_1$, there are at most two endpoints $e_1,e_2\in E$ with $c_a^{e_1}=c_a^{e_2} = 1$.
\end{itemize}
\end{proposition}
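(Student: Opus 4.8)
The plan is to read all three assertions directly off the definitions of the companion function $\varphi_u$ and of the endpoint set $E$, combined with the explicit description of the endpoints of the PEG established above; there is no analytic content, only bookkeeping, so this will be a short proof once the case analysis is laid out.

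\emph{Parts (a) and (b).} I would first inspect the four cases in the definition of $\varphi_u$ and observe that for a fixed endpoint $e$ the value $\varphi_u(e)$ is $u(o(x,s))+u(i(x,s))$, or $u(o(x,s))$, or $u(i(x,s))$, or $0$, according as $e$ is of type (Ia), (Ib), (Ic), or (Id)/(II). In every case $\varphi_u(e)=\sum_{a\in Q_1}c^e_a u_a$ with each $c^e_a\in\{0,1\}$ and at most two of the $c^e_a$ equal to $1$, namely those indexed by the (at most two) arrows occurring in the label of $e$; this is (b), and in particular $u\mapsto\varphi_u(e)$ is $\N$-linear, so $\varphi_{u+u'}=\varphi_u+\varphi_{u'}$ as functions on $E$. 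For (a), recall that $U$ was defined as the set of functions $u\colon Q_1\to\N$ with $\varphi_u(e)=\varphi_u(\Theta(e))$ for all $e\in E$; under the identification $\{u\colon Q_1\to\N\}=\N^{Q_1}$ this is exactly the subset displayed in (a), and it is a sub-semigroup (indeed a submonoid) of $\N^{Q_1}$ under coordinatewise addition because $0\in U$ and, by linearity, if $u,u'\in U$ then $\varphi_{u+u'}(e)=\varphi_u(e)+\varphi_{u'}(e)=\varphi_u(\Theta(e))+\varphi_{u'}(\Theta(e))=\varphi_{u+u'}(\Theta(e))$.

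\emph{Part (c).} Fix $a\in Q_1$ and set $s=c(a)$, $x=ta$, $y=ha$. By (b), $u_a$ occurs with nonzero coefficient in $\varphi_u(e)$ precisely when $a$ is one of the arrows appearing in the label of $e$. An arrow occurs in the label of $e$ only in the form $o(z,s')$ or $i(z,s')$, where $z$ is the vertex attached to $e$ and $s'$ its color; since $a=o(ta,s)=i(ha,s)$ and $ta\neq ha$ because $Q$ has no loops, the only endpoints whose label can contain $a$ are those sitting at the coupled pair $(x,s)$ (through $a=o(x,s)$) or at the coupled pair $(y,s)$ (through $a=i(y,s)$); lonely pairs contribute only type-(II) endpoints, which carry no arrow label. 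I would then invoke the explicit list of endpoints at a coupled pair $(x,s)$: either $r(o(x,s))+r(i(x,s))=\beta_x$ and there is a single endpoint, of type (Ia), whose label contains $o(x,s)$, or the inequality is strict and the endpoints are one of type (Ib) with label $o(x,s)$, one of type (Ic) with label $i(x,s)$, and the rest of type (Id) carrying no label. In all cases the arrow $o(x,s)$ appears in the label of at most one endpoint at $(x,s)$, and symmetrically $i(y,s)$ appears in the label of at most one endpoint at $(y,s)$; as the endpoints at $(x,s)$ and at $(y,s)$ are distinct simple roots (here $x\neq y$), we get at most two endpoints $e$ with $c^e_a=1$, which is (c).

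The one point deserving care, and the main (though mild) obstacle, is making the case analysis in (c) genuinely exhaustive: one must check that the degenerate configurations — rank-zero arrows, the amalgamation of two endpoints into a single type-(Ia) endpoint when $r(o(x,s))+r(i(x,s))=\beta_x$, and lonely versus coupled vertices — can only lower the number of endpoints whose label contains a given arrow, never raise it above two, and one must use acyclicity of $Q$ at exactly the step that rules out $a$ being attached to a single pair twice, i.e.\ that prevents $ta=ha$.
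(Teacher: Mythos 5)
Your proof is correct and follows the same direct bookkeeping from the endpoint labeling that the paper uses; if anything your case analysis for (c) is more explicit. The only cosmetic difference is that the paper invokes the acyclicity observation (that $i(x,s)\neq o(x,s)$, so each coefficient is $\leq 1$) under part (b) rather than (c), and mentions maximality of $r$ in its discussion of (c) even though, as your argument shows, the labeling scheme already pins down at most one labeled endpoint at each of the two pairs $(ta,c(a))$ and $(ha,c(a))$ regardless of maximality.
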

\begin{proof}
(a) is simply the definition of $U(Q, C, \beta, r)$, reformulated as a sub-semigroup of $\N^{Q_1}$, while (b) is the definition of the control equations.  Recall that $\varphi_u(e)=u(a)+u(b)$, $u(a)$ or $0$ for any endpoint $e$, and since the quiver is acyclic, $a\neq b$, so the coefficient on any summand is at most $1$.  To show (c), we recall that $r$ is a \emph{maximal} rank sequence for $\beta$.  This implies that if $e_1$ is an endpoint of type (Ia) labeled $(a,b)$ (in which case $\varphi_u(e_1) = u_b + u_a$), then the only other type of endpoint labeled with an $a$ is either another of type (Ia) labeled $(c,a)$, or one of type (Ic) labeled $a$.  (Similarly the only other type of endpoint labeled with a $b$ is either another of type (Ia) labeled $(b,c)$, or of type (Ib) labeled $b$.)
\end{proof}


\section{Matching Semigroups}\label{sec:semigroupU}
Fix $(Q, c)$ a gentle string algebra  $ \beta$ a dimension vector vector and rank sequence, together with its PEG, $\Sigma$.  We will define a general class of sub-semigroups of $\N^l$, of which all $U(Q, C, \beta, r)$ are members. We will then describe a general procedure for determining generators and relations for these semigroup rings by means of a graph, and show that the generators of these semigroups occur in multidegree at most 2.  First, however, we exhibit some structure enjoyed by $k[U]$.
\begin{theorem}
The semigroup ring $\C[U]$ is the coordinate ring of an affine toric variety. 
\end{theorem}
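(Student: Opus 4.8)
The plan is to realize $U=U(Q,c,\beta,r)$ as the monoid of lattice points of a rational polyhedral cone and then invoke the standard dictionary between such monoids and normal affine toric varieties. Recall from Proposition~\ref{prop:generalUproperties} that $U$ sits inside $\N^{Q_1}$ as the set of tuples $(u_a)$ satisfying the control equations $\varphi_u(e)=\varphi_u(\Theta(e))$ for $e\in E$, and that each $\varphi_u(e)=\sum_{a\in Q_1}c_a^e u_a$ is a linear form with coefficients $c_a^e\in\{0,1\}$. These being homogeneous integral linear equations, I would extend $\varphi$ to real arguments by the same formulas and set
\[
C:=\bigl\{x\in\mathbb{R}^{Q_1}\ \big|\ x_a\geq 0\ \forall a\in Q_1,\ \varphi_x(e)=\varphi_x(\Theta(e))\ \forall e\in E\bigr\},
\]
a rational polyhedral cone, and observe that $U=C\cap\Z^{Q_1}$.

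Next I would check that $U$ is a saturated affine semigroup. Finite generation is Gordan's lemma: the lattice points of a rational polyhedral cone form a finitely generated monoid, so $U=C\cap\Z^{Q_1}$ is finitely generated and $\C[U]$ is a finitely generated $\C$-algebra; moreover, as a subsemigroup of the free abelian group $\Z^{Q_1}$, the ring $\C[U]$ embeds into the Laurent polynomial ring $\C[t_a^{\pm1}\mid a\in Q_1]$, so it is a domain. For saturation, let $G\subseteq\Z^{Q_1}$ be the subgroup generated by $U$ (the prospective character lattice). I claim $U=C\cap G$: indeed $U\subseteq C$ and $U\subseteq G$ give $U\subseteq C\cap G$, while $C\cap G\subseteq C\cap\Z^{Q_1}=U$ gives the reverse inclusion; equivalently, if $x\in G$ and $mx\in U$ for some integer $m\geq 1$, then $x=\tfrac{1}{m}(mx)\in C$ since $C$ is a cone and $x\in G\subseteq\Z^{Q_1}$, hence $x\in U$. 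I would also record that $U$ is pointed, since $U\cap(-U)\subseteq\N^{Q_1}\cap(-\N^{Q_1})=\{0\}$.

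With these facts in hand the conclusion is formal. A finitely generated saturated subsemigroup of a lattice is exactly the monoid of lattice points of a rational polyhedral cone inside its own real span, so by the standard correspondence for normal affine toric varieties $\Spec\C[U]=\Spec\C[C\cap G]$ is a normal affine variety on which the torus $T:=\Spec\C[G]\cong(\C^*)^{\operatorname{rank}G}$ acts through the $U$-grading on $\C[U]$, with $T$ occurring as a dense open orbit because $U$ generates $G$; pointedness of $U$ gives in addition a $T$-fixed point. Thus $\C[U]$ is the coordinate ring of an affine toric variety.

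I do not expect any genuine obstacle in this argument: it rests only on Gordan's lemma together with the elementary observation that a subsemigroup of $\Z^n$ cut out by homogeneous linear equations and the sign conditions $x_a\geq 0$ is automatically saturated in the group it generates. The real work --- extracting explicit generators and a presentation of $\C[U]$, and hence the degree bounds of the main theorem --- is what the matching-graph combinatorics in the rest of this section is designed to accomplish, and there the finer $\{0,1\}$-structure of the coefficients $c_a^e$ and part~(c) of Proposition~\ref{prop:generalUproperties} become essential.
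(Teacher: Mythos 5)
Your proof is correct, but it takes a genuinely different route from the paper's. The paper defines an action of the torus $(\C^*)^{S}$ (one copy of $\C^*$ per string in the PEG) on the polynomial ring $\C[X_a]_{a\in Q_1}$, with $(t_s)$ scaling the monomial $\prod X_a^{u(a)}$ by $t_s^{\varphi_u(e_1^{(s)})-\varphi_u(e_2^{(s)})}$, checks that a monomial is invariant precisely when $u\in U$, and concludes $\C[U]=\C[X_a]^{(\C^*)^S}$ is the ring of torus invariants of an affine space, hence toric. Your argument instead works intrinsically on the semigroup side: you realize $U$ as $C\cap\Z^{Q_1}$ for an explicit rational polyhedral cone $C$, then use Gordan's lemma for finite generation, a direct check for saturation in the group $G$ that $U$ generates, and pointedness from $U\subseteq\N^{Q_1}$. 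The two proofs are logically equivalent in substance (the linear equations defining $C$ are exactly the weight conditions for torus invariance), but yours is more elementary and self-contained --- it does not invoke the fact that torus invariants of $\Aa^n$ form a normal affine semigroup ring, and it yields normality of the resulting toric variety explicitly, which the paper's statement leaves implicit. The paper's version is shorter and sits more naturally in the GIT/invariant-theory framework of the surrounding sections. Both are valid; yours is the more combinatorial and arguably more informative of the two.
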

\begin{proof}
Let $\C[X_a]_{a\in Q_1}$ be the polynomial ring on the arrows of $Q_1$, and let $S$ be the set of strings in $\Sigma$.  Suppose that the PEG has the following endpoints: $\{e_1^{(s)}, e_2^{(s)}\}_{s\in S}$.  Then we define the action of $(\C^*)^S$ on $\C[X_a]_{a\in Q_1}$ as follows: suppose that $(t_s)_{s\in S} \in (\C^*)^S$, then
\begin{align*}
(t_s). \prod\limits_{a\in Q_1} X_a^{u(a)} := t_s^{\varphi_u(e_1^{(s)}) - \varphi_u(e_2^{(s)})} \prod\limits_{a\in Q_1} X_a^{u(a)}.
\end{align*}
A polynomial $p \in \C[X_a]_{a\in Q_1}$ is invariant with respect to this action if and only if its monomial terms are, so it suffices to assume $p$ is a monomial.  Suppose that a monomial $\prod\limits_{a\in Q_1}X_a^{u(a)}$ is invariant with respect to each $t_s$.  Then for each endpoint pair $\{e_1^{(s)}, e_2^{(s)}\}$, we have $$t_s. \prod\limits_{a\in Q_1} X_a^{u(a)} = t_s^{\varphi_u(e_1^{(s)}) -\varphi_u(e_1^{(s)})}\prod\limits_{a\in Q_1}X_a^{u(a)} =\prod\limits_{a\in Q_1} X_a^{u(a)},$$ so $\varphi_u(e_1^{(s)}) = \varphi_u(e_2^{(s)})$ for $s\in S$.  Therefore, such a monomial is invariant with respect to the action if and only if $u\in U$.  Then clearly $\C[U] = \C[X_a]^{(\C^*)^S}$ is the invariant ring with respect to this torus action.
\end{proof}
\begin{definition}\label{def:matchingsemigroup}
Let $\{f_i: \N^l \rightarrow \N\}_{i=1,\dotsc, 2m}$ be a collection of $\N$-linear functions $$f_i(x_1,\dotsc, x_l)=\sum_{j=1}^l c_i^j x_j$$ satisfying the following properties:
\begin{itemize}
\item[a.] $c_i^j \in \{0, 1\}$ for all $i=1,\dotsc, 2m$, $j=1, \dotsc, l$;
\item[b.] $c_i^j \neq c_{i+m}^j$ for $i=1,\dotsc, m$, $j=1,\dotsc, l$ (i.e., the equations $f_i(x_1,\dotsc, x_l) = f_{i+m}(x_1,\dotsc, x_l)$ are reduced);
\item[c.] for $j=1,\dotsc, l$, $\#\{i \mid c_i^j \neq 0,\  i=1,\dotsc, 2m\}\leq 2$ (i.e., each variable $x_j$ appears with non-zero coefficient in at most two functions $f_i$).
\end{itemize}
The semigroup 
\[
U(\{f_i\}_{i=1,\dotsc, 2m}):=\{\underline{u}=(u_1,\dotsc, u_l)\in \N^l \mid f_i(\underline{u})=f_{m+i}(\underline{u}),\ i=1,\dotsc,m\}
\] 
is called a \emph{matching semigroup} if the functions $f_i$ satisfy the conditions (a)-(c).
\end{definition}

The following is the main theorem of this section.
\begin{theorem}\label{thm:degreesofgenerators}
Suppose that $U=U(\underline{f})\subset \N^l$ is a matching semigroup with $\underline{f}=\{f_i\}_{i=1,\dotsc, 2m}$.  Then $U$ is generated by vectors $\underline{u}=(u_1,\dotsc, u_l)$ with the property that $f_i(\underline{u})\leq 2$ for $i=1,\dotsc, 2m$.  In particular, $u_i\leq 2$.
\end{theorem}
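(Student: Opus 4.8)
The plan is to set up a combinatorial reduction procedure on an element $\underline{u}\in U$ and argue that whenever some $f_i(\underline{u}) > 2$ we can write $\underline{u} = \underline{v} + \underline{w}$ with $\underline{v},\underline{w}\in U$ both nonzero, so that $\underline{u}$ is not a generator. Equivalently, I would prove the contrapositive: any indecomposable element $\underline{u}$ of $U$ satisfies $f_i(\underline{u})\le 2$ for all $i$. The key structural input is condition (c) of Definition \ref{def:matchingsemigroup}: each coordinate $x_j$ appears in at most two of the $2m$ functions $f_i$, so the functions $f_i$ together with the variables organize naturally into a graph — each variable is an edge (or a half-edge/loop if it appears in only one or zero functions) joining the (at most two) functions it appears in, and each function $f_i$ is a vertex. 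The defining equations $f_i(\underline{u}) = f_{m+i}(\underline{u})$ pair up these vertices. I would phrase the whole argument in terms of this graph: an element $\underline{u}\in U$ is an assignment of nonnegative integers to the edges such that, for each matched pair of vertices $\{i, m+i\}$, the sum of edge-labels at vertex $i$ equals the sum at vertex $m+i$.

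The heart of the argument is a local exchange move. Suppose $f_{i_0}(\underline{u}) = f_{m+i_0}(\underline{u}) = N \ge 3$ for some pair. I want to split the value $N$ at this pair as $N = N' + N''$ with $N', N'' \ge 1$, and propagate the split consistently through all other constraints, producing $\underline{v}$ (realizing $N'$ at the pair, in a way compatible with all equations) and $\underline{w} = \underline{u} - \underline{v}$. The propagation is possible because the constraint graph, after we fix the "flow" $N'$ through one matched pair, behaves like a transportation/flow problem: at each variable we are free to route an integer between $0$ and $u_j$, and the matched-pair equations are conservation laws. Concretely, I would process the variables incident to vertex $i_0$ and $m+i_0$, choosing $v_j\in\{0,\dots,u_j\}$ so that the partial sums reach $N'$ and $N''$ respectively; then move to the neighboring pairs (vertices sharing an edge with an already-processed one) and continue, always maintaining $0 \le v_j \le u_j$. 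Because at each step the remaining "slack" is a nonnegative integer and the total to be distributed ($N'$, then whatever it forces downstream) never exceeds the corresponding $f$-value of $\underline{u}$, such a choice always exists; this is essentially an induction on the edges/vertices of the connected component. Acyclicity of $Q$ — hence a tree-like or at least loop-controlled structure of the PEG strings, reflected abstractly in conditions (a)–(c) — is what keeps this propagation from creating an obstruction.

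I would then observe that once all $f_i(\underline{u}) \le 2$ for an indecomposable $\underline{u}$, the bound $u_j \le 2$ is automatic: by condition (c) the coordinate $u_j$ is bounded above by $f_i(\underline{u})$ for any $f_i$ in which $x_j$ appears with coefficient $1$ (there is at least one such $f_i$ unless $x_j$ appears in no equation, in which case $x_j$ is a free coordinate and indecomposability forces $u_j \le 1$ — actually $u_j$ could be part of the "polynomial ring" factor, but within $U$ as defined the relevant coordinates all appear; in any case the case analysis is short). Finally, I would conclude that $U$ is generated by its elements with all $f_i$-values $\le 2$, since any element decomposes into such pieces by repeatedly applying the exchange move, the procedure terminating because $\sum_i f_i(\underline{u})$ strictly decreases (or one controls $\sum_j u_j$, which is strictly positive and decreasing under proper decomposition).

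The main obstacle I anticipate is making the propagation step rigorous: I need to show the greedy routing of the split value through the constraint graph never gets stuck, i.e. that the local freedom at each variable suffices to satisfy all downstream conservation equations simultaneously and produces a genuinely nonzero $\underline{v}$ and nonzero $\underline{w}=\underline{u}-\underline{v}$. This is where conditions (a)–(c) must be used in full: (c) guarantees the graph structure (each edge meets $\le 2$ vertices) so the propagation is along a graph rather than a hypergraph; (b) ensures the equations are non-degenerate so a genuine split exists rather than a vacuous $0=0$; (a) keeps everything in terms of honest sums with $0/1$ coefficients, so "routing $v_j\le u_j$ through edge $j$" makes literal sense. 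I would likely formalize the propagation as an induction on the number of matched pairs in the connected component, with the inductive hypothesis phrased as: "given a target split of the boundary flow into the already-processed region, it extends to a valid split of the whole region." That clean statement is the technical crux; the rest is bookkeeping.
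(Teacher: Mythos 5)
Your plan --- show that any $\underline{u}$ with some $f_i(\underline{u}) \geq 3$ decomposes nontrivially in $U$, hence indecomposables have all $f_i \leq 2$ --- is logically sound and lives in the same graph-theoretic world as the paper, which also proceeds by decomposing elements of $U$ via a graph whose edges are variables and whose vertices are functions. But the justification you give for the propagation step rests on a false premise, and correcting it leads you back to the paper's two lemmas.

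You appeal to ``acyclicity of $Q$, hence a tree-like \dots structure.'' This is wrong: the theorem is stated for an \emph{abstract} matching semigroup with no quiver in sight, and the paper's graph $G(\underline{f})$ genuinely contains cycles --- these are the \emph{bands}, which are not a pathology but are essential to the theory (they supply the polynomial factor of $\SI_{Q,c}(\beta,r)$). A greedy propagation that routes a split value around a cycle must eventually close up consistently with the value it started from, and your sketch gives no reason a generic greedy choice does this. The paper repairs exactly this by constraining the propagation to \emph{alternating walks}: starting from a positive edge, strictly alternate solid and dotted edges; when the walk returns to its start it forms a band automatically, and when it reaches a loop it forms a string. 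That alternation constraint is the correct substitute for your tree intuition, and it is precisely what makes Lemma \ref{prop:alternatingwalk} and the ensuing generation lemma work.

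Even granting the decomposition step, the bound itself still requires a separate argument, because nothing about a flow decomposition says the extracted pieces have $f_i \leq 2$. The paper's Lemma \ref{lem:degreeboundforwalks} is the real crux: if an irreducible alternating walk visits a matched pair $\{i, i+m\}$ at least three times, a case analysis of the interleaving order of those six visits (the $k_1 < l_1 < l_2 < k_2 < k_3 < l_3$ pattern) produces a sub-band, contradicting irreducibility. You flag this step as the ``technical crux'' and leave it unaddressed; note that once you are there it is not a routing or flow argument at all but a combinatorial interleaving argument about alternating walks, so the greedy/transportation framing would not help even in principle.
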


In order to prove this theorem, we construct a graph $G(\underline{f})$ and interpret certain walks on this graph as elements in $U$.  

\begin{definition}
Let $G(\underline{f})$ be the multigraph with two types of edges, solid and dotted, on the vertices $\{1, \dotsc, 2m\}$, with a solid edge
\begin{align*}
\xymatrix{i \ar@{-}[r] & k} & \textrm{ whenever } c_i^j=c_k^j = 1,\ i\neq k, \end{align*}
a solid loop 
\begin{align*}
\xymatrix{i \ar@{-}[r] & i } & \textrm{ whenever } i \textrm{ is the unique integer for which } c_i^j = 1,
\end{align*}
and dotted edges $\xymatrix{ i \ar@{..}[r]& m+i}$ for $i=1,\dotsc, m.$  We define a function $L:\operatorname{Edges}(G(\underline{f})) \rightarrow \{1, x_1,\dotsc, x_l\}$ with 
\begin{align*}
L(E) = \begin{cases} 1 & \textrm{ if } E \textrm{ is a dotted edge}\\ x_j & \textrm{ if } E \textrm{ is the edge containing } $i, k$ \textrm{ arising from the condition } c_i^j = c_k^j = 1. \end{cases}
\end{align*}
\end{definition}

In depicting this graph, we will indicate the labeling as a decoration on the appropriate edge.  Heuristically, each vertex $i$ stands for a function $f_i$.  A vertex $i$ is contained in a solid edge labeled $x_j$ if $x_j$ appears with non-zero coefficient in $f_i$, and the vertices corresponding to functions on either side of a defining equation of $U(\underline{f})$ are joined by a dotted edge.  The name \emph{matching semigroup} arises from the fact that the dotted edges form a perfect matching for the graph $G(\underline{f})$.  Moreover, while each vertex is contained in exactly one dotted edge, it can be contained in several solid edges: as many as non-zero coefficients in the linear function to which it corresponds.

\begin{example}\label{ex:matchingsemigroup}
We provide an example to clarify some of the more complicated definitions.  Let $U\subset \N^{11}$ be the semigroup defined by the following equations:
\begin{align*}
f_1(\underline{x}) = x_1+x_2 &= 0 = f_6(\underline{x})\\
f_2(\underline{x})= x_2+x_3 &= x_8 + x_9 =f_7(\underline{x})\\
f_3(\underline{x}) = x_3+x_4 &= x_9+x_{10} = f_8(\underline{x})\\
f_4(\underline{x})= x_5+x_4 &= x_7+x_{11} = f_9(\underline{x})\\
f_5(\underline{x})=x_6+x_7 &=x_{11} =f_{10}(\underline{x}).
\end{align*}
Then the graph $G(\underline{f})$ looks as follows:
\begin{align}\label{ex:matchinggraph}
\xymatrix@C=10ex@R=10ex{
1 \ar@[|(4)]@{-}@(l,u)^{x_1}  \ar@[|(4)]@{-}[r]_{x_2} & 2 \ar@[|(4)]@{-}[r]_{x_3} & 3  \ar@[|(4)]@{-}[r]_{x_4} & 4\ar@[|(4)]@{-}@(ur,ul)_{x_5} & 5 \ar@[|(4)]@{-}@(r,u)_{x_6}\ar@[|(4)]@{-}[dl]_{x_7}\\
6 \ar@[|(4)]@{--}[u] & 7 \ar@[|(4)]@{-}@(l,d)_{x_8} \ar@[|(4)]@{--}[u] \ar@[|(4)]@{-}[r]_{x_9} & 8\ar@[|(4)]@{-}@(dr,dl)^{x_{10}} \ar@[|(4)]@{--}[u] & 9 \ar@[|(4)]@{-}[r]_{x_{11}} \ar@[|(4)]@{--}[u]& 10 \ar@[|(4)]@{--}[u]
 }
\end{align}
\end{example}
\begin{definition}
A \emph{walk} on $G(\underline{f})$ is a sequence of vertices and edges $w=v_nE_nv_{n-1}E_{n-1}\dotsc E_1v_0$ such that $V(E_i) = \{v_i, v_{i-1}\}$ (i.e., the vertices of $E_i$ are precisely the two surrounding it in the sequence).  To each such walk, associate an integer vector $u(w) \in \N^l$ with 
\[
u(w)_j = \# \{ k \mid \textrm{ the edge } E_k \textrm{ is labeled } x_j \}.\]
\end{definition}
\begin{definition}
A walk is called \emph{alternating} if $E_k, E_{k-1}$ are of different edge types for $k\in[n]$.  An alternating walk is called
\begin{itemize}
\item[i.] a \emph{string} if both $E_1, E_n$ are loops,
\item[ii.] a \emph{band} if $v_0 = v_n$ and $E_0, E_n$ are edges of different types, and none of the $E_i$ are loops.  
\end{itemize}
\end{definition}

Henceforth, we will refer to ``alternating'' strings and bands simply as strings and bands.

\begin{example}\label{ex:alternating}
We will illustrate some strings and bands on the graph from example \ref{ex:matchingsemigroup}.  In these walks, we write $x_i$ for the edge with the given label, and $E$ for the unique dotted edge containing a given vertex.  The walk $w_1:=3E8x_97E2x_33$ is a band, and $w_2=5x_{6}5E10x_{11}9E4x_54$ is a string.  
\end{example}

\begin{lemma}\label{prop:alternatingwalk}
Suppose that $w$ is a string or band.  Then $u(w) \in U$.
\end{lemma}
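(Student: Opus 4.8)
$\textbf{Proof proposal.}$

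The plan is to show directly that the vector $u(w)$ satisfies each defining equation $f_i(u(w)) = f_{m+i}(u(w))$ by tracking how the alternating walk $w$ crosses each dotted edge. First I would recall the key structural fact: since $w$ is alternating, its edges are dotted and solid in strict alternation, so the solid edges of $w$ split into consecutive pairs $\{E_{k-1}, E_{k+1}\}$ joined by a single dotted edge $E_k$, except possibly at the two ends, where in a string the walk terminates in solid loops and in a band the endpoint identification $v_0 = v_n$ closes everything up. The dotted edge $E_k$ connects some vertex $i$ to $m+i$, i.e. it joins the two sides of the $i$-th defining equation $f_i = f_{m+i}$.

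The main computation is to interpret $f_i(u(w))$ combinatorially. By definition $f_i(u(w)) = \sum_{j} c_i^j\, u(w)_j$, and $c_i^j = 1$ precisely when $x_j$ labels a solid edge incident to the vertex $i$ in $G(\underline f)$. Meanwhile $u(w)_j$ counts how many edges of the walk are labeled $x_j$. So $f_i(u(w))$ counts, with multiplicity, the number of times the walk $w$ traverses a solid edge incident to vertex $i$ — equivalently (being a bit careful about loops, which are incident "twice"), it records the total solid-edge-incidence of the walk at vertex $i$. Since $w$ is a walk, each time it passes through vertex $i$ it does so via two incident edges (or begins/ends there via one), and because $w$ is alternating, at an interior visit to $i$ exactly one of those two edges is the dotted edge $E_k$ at $i$ and the other is a solid edge incident to $i$. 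Hence each interior visit to $i$ contributes exactly $1$ to the count of solid-edge traversals at $i$, i.e. to $f_i(u(w))$. The same count applied at the paired vertex $m+i$ across the same dotted edge $E_k$ sees $w$ pass through $m+i$ exactly as often as through $i$ (they are the two endpoints of the dotted edges that $w$ uses, and $w$ crosses a given dotted edge as a matched segment), so $f_i(u(w)) = f_{m+i}(u(w))$.

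The step I expect to be the genuine obstacle is handling the $\textbf{endpoint bookkeeping}$ correctly and uniformly. For a string, the walk begins and ends at solid loops; a loop at vertex $v$ contributes to $f_v$ but the loop endpoints are not matched across a dotted edge in the "interior" way, so one must check separately that the loop-endpoint of a string sits at a vertex whose defining-equation partner receives a compensating contribution — this is where the hypothesis that strings terminate precisely in loops (the type-(Id)/type-(II) "zero" endpoints, matched to each other) does the work. For a band, $v_0 = v_n$ with $E_1$ and $E_n$ of different types, so the cyclic structure means every vertex visited is visited "interiorly" in the relevant sense and the counting argument goes through with no boundary terms. I would organize the proof by first treating bands (the clean cyclic case, where one just says: the solid edges pair off across dotted edges and each such pair contributes equally to the two sides of one equation, while an equation whose dotted edge is not used by $w$ gets $0 = 0$), and then treating strings by the same pairing argument with the two loop-ends accounted for via the fact that a string's endpoints are a matched pair of endpoints of the PEG on which $\varphi_u$ vanishes. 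Throughout, the only care needed is the convention that a loop at $i$ counts with the appropriate multiplicity in both $u(w)$ and in the incidence count at $i$, so that the two bookkeeping conventions cancel.
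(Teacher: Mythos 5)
Your counting argument — interpret $f_i(u(w))$ as the number of interior visits of the walk to vertex $i$, then use the alternating structure to pair each such visit with a visit to $m+i$ across the unique dotted edge joining them — is exactly the paper's proof, which records it concisely as $f_i(u(w))=\#\{j : v_j \text{ is the vertex } i\}$ with $j$ ranging over the interior indices. The endpoint bookkeeping you flag as the likely obstacle is dispatched in the paper simply by excluding $j=0$ (and, for a string, $j=n$) from the count: since $E_1$ (resp.\ $E_n$) is a loop one has $v_0=v_1$ (resp.\ $v_{n-1}=v_n$), so each terminal loop still contributes exactly one interior occurrence of its vertex, and nothing about $\varphi_u$ or PEG endpoints needs to be invoked — the lemma is stated and proved purely in the abstract matching-semigroup setting, so bringing in the PEG there is a (harmless but unnecessary) detour.
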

\begin{example}
In example \ref{ex:alternating}, $u(w_1)_i=0$ if $i\neq 3, 6$, and $u(w_1)_3=u(w_1)_6=1$.  It is easily checked that $f_j(u(w_1))=f_{5+m}(u(w_1))$ for $i=1,\dotsc, 5$.  
\end{example}

\begin{proof}
Without loss of generality, assume $i\leq m$.  Notice that if $w$ is a string, then $$f_i(u(w))= \#\{j\in\{1,\dotsc, n-1\} \mid v_j \textrm{ is the vertex } i\},$$ while if $w$ is a band, then $$f_i(u(w))=\#\{j\in \{1,\dotsc, n\} \mid v_j \textrm{ is the vertex } i\}.$$  But $w$ is alternating, so every occurrence of the vertex $i$ is either immediately preceded or succeeded by an occurrence of the vertex $i+m$, so $$f_i(u(w)) = \#\{j \mid v_j \textrm{ is the vertex } i+m \} = f_{i+m}(u(w))$$ as claimed. 
\end{proof}

\begin{lemma}\label{lem:2cycles}
$G(\underline{f})$ contains no alternating two-cycles.
\end{lemma}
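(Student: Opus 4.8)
The plan is to unwind the definitions and reduce the statement directly to condition (b) of Definition \ref{def:matchingsemigroup}. An alternating two-cycle is a closed alternating walk $v_2E_2v_1E_1v_0$ of length two with $v_0=v_2$; since alternation forces $E_1$ and $E_2$ to be of different edge types, exactly one of them is dotted and the other is solid. (Such a two-cycle is automatically a band in the sense of Section~\ref{sec:semigroupU}: it cannot be a string, because strings begin and end with loops while there are no dotted loops.)

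First I would invoke the explicit description of the dotted edges: by construction the dotted edges of $G(\underline f)$ are exactly the pairs $\{i,m+i\}$ for $i=1,\dots,m$, and in particular none of them is a loop. Hence, writing $E_1$ for the dotted edge of the supposed two-cycle, we get $\{v_0,v_1\}=\{i_0,m+i_0\}$ for some $i_0\in\{1,\dots,m\}$, and consequently $v_0\neq v_1$.

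Next I would analyze the solid edge $E_2$. Because $v_0\neq v_1$, the edge $E_2$ is not a solid loop, so by the definition of $G(\underline f)$ it arises from some index $j\in\{1,\dots,l\}$ with $c_{v_0}^{\,j}=c_{v_1}^{\,j}=1$. Substituting $\{v_0,v_1\}=\{i_0,m+i_0\}$ yields $c_{i_0}^{\,j}=c_{m+i_0}^{\,j}=1$, which contradicts property (b) of Definition \ref{def:matchingsemigroup}, namely that $c_i^{\,j}\neq c_{i+m}^{\,j}$ for all $i$ and $j$. This contradiction shows that no alternating two-cycle exists.

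The argument is short, so there is no genuinely hard step; the only thing to be careful about is the bookkeeping of degenerate cases — ruling out that the two-cycle traverses a single edge twice (impossible, since its two edges have different types) and ruling out loop edges (the dotted edge is never a loop, and the solid edge is not a loop once we know its endpoints are distinct). Once these observations are in place, the reduction to condition (b) is immediate.
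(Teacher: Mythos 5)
Your proof is correct and follows essentially the same route as the paper: both arguments observe that an alternating two-cycle forces a solid edge between $i$ and $m+i$ (the endpoints of a dotted edge), so $c_{i}^{j}=c_{m+i}^{j}=1$ for the label $x_j$, contradicting condition~(b) of Definition~\ref{def:matchingsemigroup}. Your write-up is a bit more careful about ruling out degenerate cases (loops, repeated edges), but the underlying contradiction is the same one the paper invokes.
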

\begin{proof}
If the edge labeled $x_1$ contains two vertices $i, i+m$ which are both contained in a single dotted edge, then $f_i(\underline{x}) = x_1 + \sum c_i^j x_j = x_1 + \sum c_{i+m}^j x_j = f_{i+m}(\underline{x})$, contradicting definition \ref{def:matchingsemigroup} (b).
\end{proof}

 \begin{lemma}
 A matching semigroup $U=U(\underline{f})$ is generated by the set 
 \[ 
 	\{u(w) \mid w \textrm{ is either a string or a band on } G(\underline{f})\}.
 \]
 \end{lemma}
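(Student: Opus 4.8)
The inclusion ``$\supseteq$'' is already available: by Lemma~\ref{prop:alternatingwalk} every vector $u(w)$ with $w$ a string or a band lies in $U$, hence so does every non-negative integer combination of such vectors. For ``$\subseteq$'' the plan is an induction on the total weight $|\underline u| := \sum_{j=1}^{l} u_j$ of an element $\underline u \in U$, the case $\underline u = 0$ being trivial. The inductive step reduces to the single core claim: \emph{if $\underline u \in U$ and $\underline u \neq 0$, then $G(\underline f)$ carries a string or a band $w$ with $u(w)\le \underline u$ coordinatewise.} Indeed, once such a $w$ is found, $\underline u - u(w)$ is again a non-negative integer vector, and it satisfies each defining equation $f_i = f_{i+m}$ of $U$ because both $\underline u$ and $u(w)$ do (the latter by Lemma~\ref{prop:alternatingwalk}); thus $\underline u - u(w) \in U$. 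A string uses its two terminal loops and a band has length at least $4$ (there are no alternating $2$-cycles, by Lemma~\ref{lem:2cycles}), so in either case $|u(w)| \ge 2$ and $|\underline u - u(w)| < |\underline u|$; the induction hypothesis then writes $\underline u - u(w)$, and hence $\underline u$, as a combination of the desired generators. Equivalently, one may phrase this as: $U$ is an affine semigroup, and every irreducible element of $U$ equals some $u(w)$.

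To establish the core claim I would build $w$ by a greedy alternating walk. Assign each solid edge $E_j$ a residual budget $b_j$, initially $u_j$, to be decremented whenever $E_j$ is traversed; only solid edges with positive budget may be used, which guarantees at the end that every solid edge --- and hence every sub-walk extracted --- respects $u(w) \le \underline u$. Start from any $j_0$ with $u_{j_0}>0$ and grow an alternating walk from the edge $E_{j_0}$, extending at each stage by a dotted edge followed by a solid edge. The engine of the extension is the propagation of non-vanishing: if the solid edge just traversed has positive $u$-value and meets a vertex $v$, then $f_v(\underline u)>0$, so by the defining relations of $U$ also $f_{v'}(\underline u)>0$ at the dotted partner $v'$ of $v$, which forces some solid edge at $v'$ to have positive $u$-value. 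Condition~(c) of Definition~\ref{def:matchingsemigroup} (equivalently, maximality of the rank sequence), together with Lemma~\ref{lem:2cycles}, controls how such edges can interact and rules out the degenerate possibility of a solid edge parallel to a dotted one. The walk is grown until one of the good terminations occurs: both ends sit at solid loops, giving a string; or a vertex is revisited at two positions of the same parity along the walk, so that the alternating closed sub-walk between them contains no loops and has length $\ge 4$ by Lemma~\ref{lem:2cycles}, giving a band.

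The crux --- and the step I expect to be the main obstacle --- is verifying that the greedy walk cannot halt in any \emph{other} way: it cannot get ``stuck'' at a vertex $z$ all of whose positive solid edges have already been exhausted without a band having already been enclosed, and solid loops (the only solid edges allowed to be terminal in a string) must be bookkept separately throughout. I anticipate this needing a counting argument local to $z$: when $z$ is saturated, the number of traversals of solid edges incident to $z$ is exactly $f_z(\underline u)$; each interior visit to $z$ consumes precisely one solid and one dotted incidence (by the alternation of edge types and the fact that the unique dotted edge at $z$ is not a loop); and comparing these two counts forces the walk to have visited $z$ interiorly at least twice with matching parity --- whence a band was already available, a contradiction. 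Loops at $z$ distort these tallies by a controlled amount and are absorbed as a separate term. Granting this, the induction of the first paragraph finishes the proof, and simultaneously identifies the minimal generators of $U$ with the vectors coming from strings and bands.
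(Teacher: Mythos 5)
Your overall strategy matches the paper's: show every nonzero $\underline u\in U$ dominates $u(w)$ for some string or band $w$, observe that $\underline u-u(w)$ is again in $U$ (since $U$ is defined by linear equalities closed under coordinatewise subtraction within $\N^l$), and induct on total weight. That reduction is correct and is exactly how the paper proceeds.

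The gap is in the ``core claim.'' Your ``propagation of non-vanishing'' argument is stated about the original vector $\underline u$: from $f_v(\underline u)>0$ you deduce $f_{v'}(\underline u)>0$ and hence the existence of a solid edge at $v'$ of positive $u$-value. But what the greedy construction actually needs is a solid edge at $v'$ with positive \emph{residual} budget, i.e.\ $f_{v'}(\underline u-u(t_k))>0$ after the decrements; and this does not follow from $f_{v'}(\underline u)>0$. Indeed, right after traversing a solid edge into $v$ and crossing the dotted edge to $v'$, the residual satisfies only $f_v(\underline u-u(t_k))\ge 0$, and $f_{v'}$ of the residual could a priori be zero. The paper closes exactly this gap by maintaining running invariants along the partial walk $t_k$: at every stage the residual $\underline u-u(t_k)$ satisfies all defining equations except the one indexed by the current dotted edge, and that one is off by exactly $1$ in the right direction, namely $f_{v_{2k-1}}(\underline u-u(t_k))+1=f_{v_{2k}}(\underline u-u(t_k))$. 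This identity immediately yields $f_{v_{2k}}(\underline u-u(t_k))\ge 1$, so a positive-budget solid edge at $v_{2k}$ always exists, and the walk never sticks; termination is forced because $u(t_k)$ strictly increases and is bounded above by $\underline u$. Your alternative fallback---the ``local counting argument at the stuck vertex $z$''---is left as a sketch and is not straightforward: distinguishing walk-endpoint incidences from interior incidences, and accounting for solid loops (which contribute a single incidence), requires exactly the kind of bookkeeping the paper's invariant supplies more cleanly. You should also note the paper's split into two cases (some $u_j>0$ at a loop variable, versus none), since a string must start and end at loops; your write-up collapses these and would need to separate them to guarantee the ``good termination.'' As written, the proposal identifies the right statement to prove but does not establish it.
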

 \begin{proof}
Let $\leq$ be the coordinate-wise partial order on $U$.  We will show that for each $0\neq u \in U$, there is a non-trivial alternating walk $w$, which is either a string or a band, and an element $u'\in U$ such that 
 \begin{itemize}
 \item[i.] $u'\leq u$,
 \item[ii.] $u=u(w) + u'$.
 \end{itemize}

\begin{itemize}
\item[\underline{Case 1}:] Suppose that $u_{j_1} \neq 0$ for some $j_1$ for which $x_{j_1}$ is a loop.  We inductively construct a sequence of alternating walks $t_k = v_{2k} E_{2k} v_{2k-1} \dotsc v_1 E_1 v_0$ with $L(E_1)=x_{j_1}$ satisfying the following:
\begin{itemize}
\item[(1)] $0<u(t_k)<u(t_{k+1}) <u$
\item[(2)] $f_{v_{2k-1}}(u-u(t_k))+1 = f_{v_{2k}}(u-u(t_k))$
\item[(3)] $f_i(u-u(t_k)) = f_{i+m}(u-u(t_k))$ whenever $\{i,i+m\}\neq \{v_{2k}, v_{2k-1}\}$.
\end{itemize}
Let $E_1$ be the edge with $L(E_1)=x_{j_1}$, $v_0=v_1$ the unique vertex contained in this loop, $E_2$ the dotted edge containing $v_1$, and $v_2$ the unique second vertex contained in $E_2$.  
\begin{itemize}
\item[Claim 1:] $t_1$ satisfies (1)-(3).
\begin{proof}
$u(t_1)_{j_1} = 1$, so immediately $u(t_1)>0$.  Furthermore, $u(t_1)_{j'}=0$ for $j'\neq j_1$, and since $c^{j_1}_{v_1}=1$, $c^{j_1}_{v_2} = 0$, $f_{v_2}(u(t_1)) = 0$.  On the other hand, $f_{v_2}(u)=f_{v_1}(u) >0$ by assumption, so $u(t_1)<u$, and (1) is proven.  

As for (2) and (3), $f_{v_1}(u-u(t_1))=f_{v_1}(u) - 1=f_{v_2}(u)-1 = f_{v_2}(u-u(t_1))-1$ since $u\in U$.  Furthermore, if $\{i, i+m\}\neq \{v_{2}, v_{1}\}$, then $c^j_i = c^j_{i+m}=0$ since $f_{v_1}$ is the unique function in which $x_j$ appears with non-zero coefficient (as $E_1$ is a loop).  Therefore, $f_{i}(u-u(t_1))=f_{i}(u)=f_{i+m}(u) = f_{i+m}(u-u(t_1))$, proving (3).
\end{proof}
\item[Claim 2:] If $t_k= v_{2k} E_{2k} v_{2k-1} \dotsc v_1 E_1 v_0$ satisfies (1)-(3), and there is no $E_s$ for $s=2, \dotsc, 2k$ with $E_s$ a loop, then there are two possibilities:
\begin{itemize}
\item[a.] There is a loop $E_{2k+1}$ containing the vertex $v_{2k}$ such that the walk $w:=v_{2k}E_{2k+1} t_k$ is an alternating string and $u(w)\leq u$;
\item[b.] There is a solid edge $E_{2k+1}$ which is not a loop such that $t_{k+1}=v_{2k+2}E_{2k+2}v_{2k+1}E_{2k+1} t_k$ is an alternating walk satisfying (1)-(3).
\end{itemize}
Before proving this dichotomy, we note that this proves the following: if $u\in U$ such that $u_j \neq 0$ with $x_j$ a loop, then there is a an alternating string such that $u-u(w)\in U$.  Indeed, $u(t_k)<u(t_{k+1})<u$ by (1), so there must be a $t_k$ such that $u(t_k)<u$ and for which there is a loop $E_{2k+1}$ such that $w$ as defined in (a) is an alternating string and $u(w)\leq u$.  
\begin{proof}
Suppose that $t_k=v_{2k}E_{2k}v_{2k-1} \dotsc v_1 E_1 v_0$ contains no loops other than $E_1$, satisfies (1)-(3), and does not satisfy (a).  By property (2), 
\[
f_{v_{2k-1}}(u-u(t_k))+1 = f_{v_{2k}}(u-u(t_k)) = \sum\limits_{j \mid c^j_{v_{2k}}\neq 0} u_j - u(t_k)_j.
\]
Since $f_{v_{2k-1}}(u-u(t_k))\geq 0$, there must be a $j_k$ such that $u_{j_k}>u(t_k)_{j_k}$ and $c^{j_k}_{v_{2k}} = 1$.  In terms of the graph, then, there is a solid edge $E_{2k+1}$ (which is not a loop since $t_k$ does not satisfy (a)) with $L(E_{2k+1}) = x_{j_k}$ containing the vertex $v_{2k}$.  Let $v_{2k+1}$ be the distinct second vertex contained in $E_{2k+1}$, $E_{2k+2}$ the unique dotted edge containing $v_{2k+1}$, and $v_{2k+2}$ the distinct second vertex contained in $E_{2k+2}$.  Let $t_{k+1}=v_{2k+2}E_{2k+2}v_{2k+1}E_{2k+1}t_k$.  We claim that $t_{k+1}$ satisfies (1)-(3). 

Notice that $u(t_k)_{j_k}+1 = u(t_{k+1})$ (as $t_{k+1}$ has an additional occurrence of the edge labeled $x_{j_k}$), and $u(t_k)_{j'} = u(t_{k+1})_{j'}$ for $j'\neq j_k$.  Therefore, $0<u(t_k)<u(t_{k+1})$ and $u(t_{k+1})_{j'}\leq u_{j'}$ for $j'\neq j_k$.  Furthermore, $u_{j_k}>u(t_k)_{j_k}$ from above, so $u_{j_k}\geq u(t_k)_{j_k} +1 = u(t_{k+1})_{j_k}$, so $u(t_{k+1})\leq u$.  We will show in the course of proving (2) that $u(t_{k+1}) \notin U$, implying we cannot have equality, so $u(t_{k+1})<u$ as claimed.

Recall that since $v_{2k+1}$ contains the edge labeled $x_{j_k}$, $c^{j_k}_{v_{2k+1}} = 1$.  By lemma \ref{lem:2cycles}, then, $c^{j_k}_{v_{2k+2}} = 0$.  Furthermore, $f_{v_{2k+1}}(u-u(t_k))=f_{v_{2k+2}}(u-u(t_k))$ since $t_k$ satisfies condition (3).  Therefore 
\begin{align*}
f_{v_{2k+1}}(u-u(t_{k+1})) &= f_{v_{2k+1}}(u-u(t_k))-1 \\
				&= f_{v_{2k+2}}(u-u(t_k))-1\\
				&= f_{v_{2k+2}}(u-u(t_{k+1}))-1
				\end{align*}
proving (2).

Finally, if $\{i, i+m\}=\{v_{2k-1}, v_{2k}\}$, then 
\begin{align*}
f_{v_{2k-1}}(u-u(t_{k+1})) &= f_{v_{2k-1}}(u-u(t_k)) \\ 
					&=f_{v_{2k}}(u-u(t_k))-1\\
					&=f_{v_{2k}}(u-u(t_{k+1})),
					\end{align*}
while if $\{i, i+m\} \not\subset\{v_{2k-1}, v_{2k}, v_{2k+1}, v_{2k+2}\}$, then $f_i(u-u(t_{k+1})) = f_i(u-u(t_k)) = f_{i+m}(u-u(t_k)) = f_{i+m}(u-u(t_{k+1}))$, proving (3).
\end{proof}
\end{itemize}
\item[\underline{Case 2}:] Now suppose for all $j$ such that $x_j$ is a loop, we have that $u_j = 0$.  Take $j_1$ with $u_{j_1} \neq 0$ (possible since $u\neq 0$).  Let $v_0, v_1$ be the vertices (taken in some order) contained in the edge labeled $x_j$, $E_1$ this edge, $E_2$ the dotted edge containing $v_1$ and $v_2$ the other end of this edge.  Call this walk $t_1$.  Notice that $v_2\neq v_0$ by lemma \ref{lem:2cycles}.  We can again recursively define alternating walks $t_k$ starting with $t_1$ satisfying the following: if $v_0 \neq v_{2k}$, then 
\begin{itemize}
\item[(1)] $0<u(t_k)<u(t_{k+1})\leq u$
\item[(2)] $f_{v_{2k-1}}(u-u(t_k))+1 = f_{v_{2k}}(u-u(t_k))$
\item[(3)] $f_i(u-u(t_k))=f_{i+m}(u-u(t_k))$ whenever $\{i, i+m\} \not\subset\{v_{2k}, v_{2k-1}, v_0\}$.
\item[(4)] $t_k$ can be extended to an alternating walk $t_{k+1}$ which is either an alternating band with $u(t_{k+1})\leq u$ or $t_k$ satisfies (1)-(3).
\end{itemize} 
Thus, completely analogously to Case 1, there must be a $t_k$ that is a band.  As the proof is nearly verbatim of the proof of Case 1, we omit it.
\end{itemize}
Therefore, $u=\sum u(w_i)$ for $w_i$ some strings or bands.  
\end{proof}
 
 Notice that it is possible that $f_i = 0$ for some index $i\leq m$ (say), while $f_{i+m}=\sum c^j_{i+m} x_j$ with some $c^j_{i+m} \neq 0$ for some $j$.  It may not be clear why if $w$ is alternating string or band, then $u(w)_j =0$, which would be required if $u(w)\in U$.  However, if $f_i=0$, then there are no solid edges containing the vertex $i$.  Any alternating path passing through the solid edge labeled $x_j$ would then pass through the dotted edge between $i+m$ and $i$.  Since the walk couldn't finish at that vertex, it would immediately pass back through the dotted edge, contradicting the alternating property of the walk.

\begin{definition}
A string or band $w$ is called irreducible if there does not exist a pair of non-trivial strings or bands $w',w''$ satisfying $u(w)=u(w')+u(w'')$.
\end{definition}
Clearly $U$ is generated by $\{u(w)\mid w \textrm{ is an irreducible alternating string or band}\}$.  
\begin{lemma}\label{lem:degreeboundforwalks}
If $w$ is an irreducible string or band, then $f_i(u(w)) \leq 2$ for $i=1,\dotsc, 2m$.  
\end{lemma}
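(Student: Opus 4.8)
The plan is to argue by contradiction: from a string or band $w$ with $f_{i_0}(u(w))\ge 3$ for some vertex $i_0$ I will exhibit two non-trivial strings or bands $w_1,w_2$ with $u(w)=u(w_1)+u(w_2)$, violating irreducibility; since $i_0$ is arbitrary this yields $f_i(u(w))\le 2$ for every $i\in\{1,\dots,2m\}$.

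First I would rewrite $f_{i_0}(u(w))$ as a visit count, exactly as in the proof of Lemma~\ref{prop:alternatingwalk}: $f_{i_0}(u(w))=\#\{j\mid v_j=i_0\}$, where $j$ ranges over $\{1,\dots,n\}$ if $w$ is a band and over $\{1,\dots,n-1\}$ if $w$ is a string, the string case relying on the fact that the interior positions of a string carry no loop vertex (a solid loop forces the walk to return immediately along the unique dotted edge at that vertex, so it can only sit at an end of the walk). Hence $f_{i_0}(u(w))\ge 3$ gives three positions $p_1<p_2<p_3$ in the relevant range with $v_{p_1}=v_{p_2}=v_{p_3}=i_0$. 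Two of any three integers share a parity; fix $a<b$ in $\{1,2,3\}$ with $p_a\equiv p_b\pmod 2$, let $w_1$ be the sub-walk $v_{p_b}E_{p_b}v_{p_b-1}\cdots E_{p_a+1}v_{p_a}$, and let $w_2$ be $w$ with the edges $E_{p_a+1},\dots,E_{p_b}$ deleted and the ends reglued at $v_{p_a}=v_{p_b}$.

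The crux is that the parity choice keeps both pieces admissible. Normalising so that $E_\ell$ is solid exactly when $\ell$ is odd, the congruence $p_a\equiv p_b$ makes $E_{p_b}$ and $E_{p_a+1}$ of opposite types, so $w_1$ closes up alternately; it likewise makes $E_{p_a}$ and $E_{p_b+1}$ of opposite types, so $w_2$ is alternating at the splice (for a band the edges $E_1,E_n$ already alternate at the wrap-around; for a string the terminal loops $E_1,E_n$ survive in $w_2$). Moreover $w_1$ only uses edges in positions $p_a+1,\dots,p_b$, all inside $\{2,\dots,n-1\}$, so it carries no loop and is a band, whereas $w_2$ keeps the wrap-around/terminal loops and is again a band resp.\ string. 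Both are non-trivial: $w_1$ has $p_b-p_a\ge 2$ edges, and since $p_b-p_a$ is even with $p_b-p_a\le p_3-p_1\le n-2$, the walk $w_2$ has at least two edges as well. Since the edge multisets of $w_1$ and $w_2$ partition that of $w$, we get $u(w)=u(w_1)+u(w_2)$, contradicting irreducibility; therefore $f_i(u(w))\le 2$ for all $i$.

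The step I expect to be the main obstacle is the handling of loops: one must justify that a string has no interior loop (so that the visit-count identity is exact and the middle piece $w_1$ is genuinely a band), and one must treat carefully the case in which $i_0$ is a terminal loop vertex of a string, choosing the cut positions $p_a,p_b$ inside $\{1,\dots,n-1\}$ — which is possible because a loop at $i_0$ in position $1$ (resp.\ $n$) already forces $v_0=v_1$ (resp.\ $v_{n-1}=v_n$). The remaining verifications are routine parity bookkeeping.
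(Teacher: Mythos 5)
Your plan — rewrite $f_{i_0}(u(w))$ as a visit count, pick two visits of equal parity, cut $w$ there into a sub-band $w_1$ and a remainder $w_2$ — is in essence the same decomposition the paper constructs. The paper organizes the same information by tracking both the positions $k_j$ of $i$ and the positions $l_j$ of $i+m$ and case-splitting on their interleaving; your observation that the parity of a position of $i_0$ already records whether the paired $i_0+m$ sits before or after it collapses that case analysis into one pigeonhole step and directly exhibits $u(w)=u(w_1)+u(w_2)$, which is a genuine (minor) simplification of the bookkeeping. The parity check that the splice in $w_2$ and the closure of $w_1$ remain alternating is correct, as is the non-triviality argument for both pieces.

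The one place your argument is not yet sound is the claim that an irreducible string has no interior loop, and specifically the reason you give: ``a solid loop forces the walk to return immediately along the unique dotted edge at that vertex, so it can only sit at an end of the walk.'' The first half is right — at an interior loop $E_\ell$ the walk must enter and leave along the same dotted edge, so $E_{\ell-1}=E_{\ell+1}$ and $v_{\ell-2}=v_{\ell+1}$ — but this does \emph{not} force the loop to the end of the walk: after backtracking to $v_{\ell+1}=v_{\ell-2}$ the walk is free to continue along any solid edge at that vertex, so an alternating string with an interior loop is a perfectly well-formed walk under the paper's definitions. What you actually need (both for your visit-count identity to be exact and for the middle piece $w_1$ to contain no loop) is that such a configuration is incompatible with $u(w)\in U$ or with irreducibility, and that requires a different argument — e.g.\ comparing $f_v$ and $f_{v+m}$ across the backtrack, or showing the loop can be excised. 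This is a genuine gap in the justification as written, although it mirrors an implicit assumption already present in the paper's own proof of Lemma~\ref{prop:alternatingwalk} and of this lemma, where the identity $f_i(u(w))=\#\{j\mid v_j=i\}$ is asserted without addressing the loop case. Once that point is patched, the rest of your parity bookkeeping goes through.
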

\begin{proof}
Suppose that $w=v_n E_n \dotsc E_1 v_0$ is an irreducible string or band, and $f_i(u(w))\geq 3$ for some $i=1,\dotsc, m$ (in particular, $f_{i+m}(u(w))\geq 3$).  This implies that the vertex $i$ appears in the set $\{v_1,\dotsc, v_{n-1}\}$ at least thrice.  Let $E$ be the dotted edge containing the vertices $i$ and $i+m$.  Recall that in an alternating path, each occurrence of the vertex $i$ is immediately succeeded or immediately preceded by an occurrence of $i+m$.  Let $1\leq k_1<k_2<k_3\leq n-1$ be the first three integers such that $v_{k_j} = i$, and $1\leq l_1<l_2<l_3\leq n-1$ the first three such that $v_{l_j}= i+m$.  Suppose without loss of generality that $k_1<l_1$. We claim that if $k_2<l_2$ or $l_3<k_3$, then $w$ is not irreducible.  In this case, $k_2<l_2$ implies that $w$ contains a sub-band, namely $$w=\dotsc v_{l_2} E (v_{k_2} E_{k_2} \dotsc v_{l_1} E v_{k_1}) \dotsc$$  In a diagram (although the graph is undirected, the sequence of edges and vertices of the walk will be indicated with arrows): 
\begin{align*}
\includegraphics{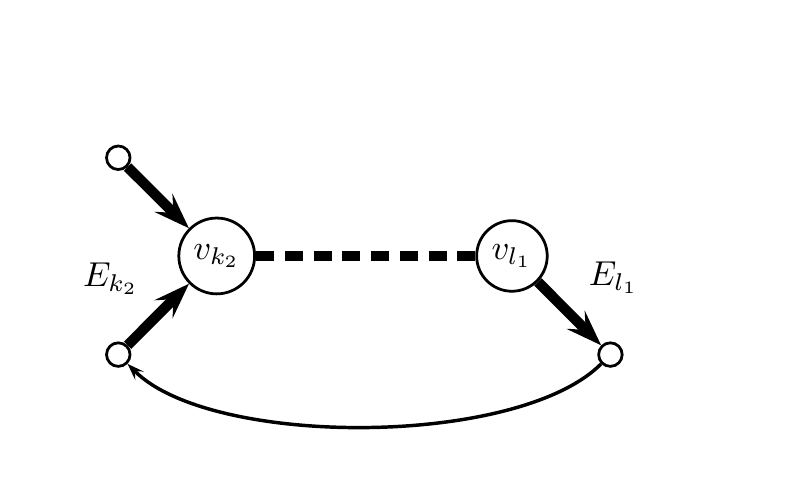}
\end{align*}
(Here the thinner arc connecting the two bottom vertices represents an alternating walk that starts and ends with dotted edges.)  This contradicts the assumption of irreducibility, so $k_2>l_2$, and the same contradiction implies that $k_3<l_2$, so we have that  $k_1<l_1<l_2<k_2<k_3<l_3$.  But now we have that $$w=\dotsc v_{l_3} (E v_{k_3} E_{k_3} \dotsc  E_{k_2+1}v_{k_2} E v_{l_2} E_{l_2} \dotsc E_{l_1+1} v_{l_1})E v_{k_1}\dotsc$$ which contains the parenthesized band.  In diagram form:
\begin{align*}
\includegraphics{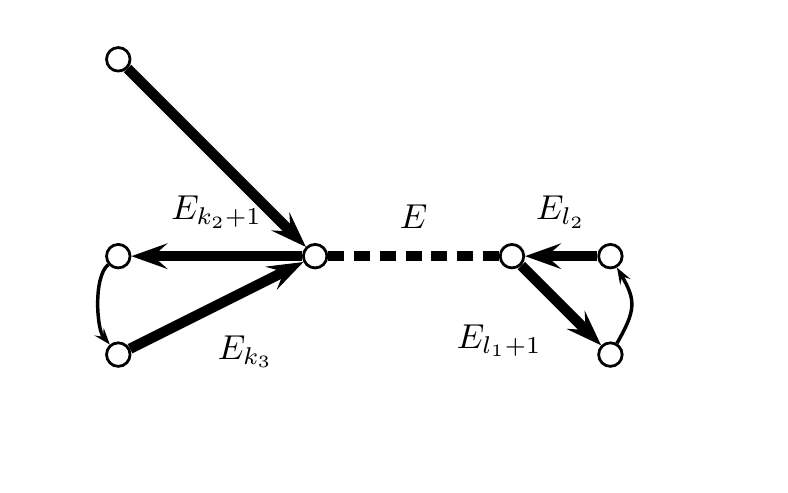}
\end{align*}
again contradicting irreducibility of $w$. 
\end{proof} 

\subsection*{proof of theorem \ref{thm:degreesofgenerators}} $U(\underline{f})$ is generated by the $u(w)$ for $w$ irreducible strings and bands, and for such walks, $f_i(u(w))\leq 2$ for $i=1,\dotsc, 2m$ by lemma \ref{lem:degreeboundforwalks}.  This concludes the proof.
\begin{flushright} $\boxtimes$ \end{flushright}

The presentation of $U(\underline{f})$ using walks on a graph allows us to determine the relations in the ring $\C[U(\underline{f})]$ as well.  Let $W(\underline{f})$ be the free semigroup generated by the irreducible paths $w_i$ on $G(\underline{f})$, and extend the function $u$ to $W(\underline{f})$ linearly.  Let $\sim_W$ be the kernel equivalence of this map, i.e., $A\sim_W B$ if and only if $u(A)=u(B)$.  The relation $\sim_W$ is a semigroup congruence, so $W(\underline{f})/\sim_W$ is a semigroup isomorphic to $U(\underline{f})$, and $\C[U(\underline{f})]$ is isomorphic to $\C[W(\underline{f})]/I_W$ where $I_W$ is generated by all elements $t_w -t_{w'}$ for $w\sim_W w'$.

\begin{remark}
Notice that since $\sim_W$ is a semigroup congruence, one has cancellation.  That is $a+b \sim_W a+c$ if and only if $b\sim_W c$.  This can  be recognized immediately from the definition of $\sim_W$.  
\end{remark}

\begin{definition}\ 
\begin{figure}[h]
\centering
\subfloat[$X$-Configuration about $E$]{\label{fig:XRelations}\includegraphics[width=3in, height=2.2in]{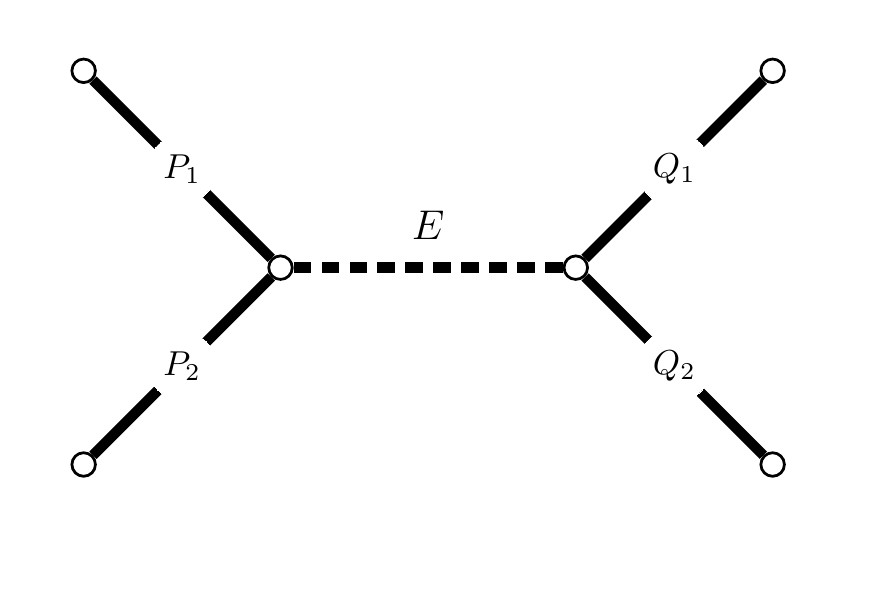}}
\subfloat[$H$-Configuration about $E, E'$]{\label{fig:HRelations}\includegraphics{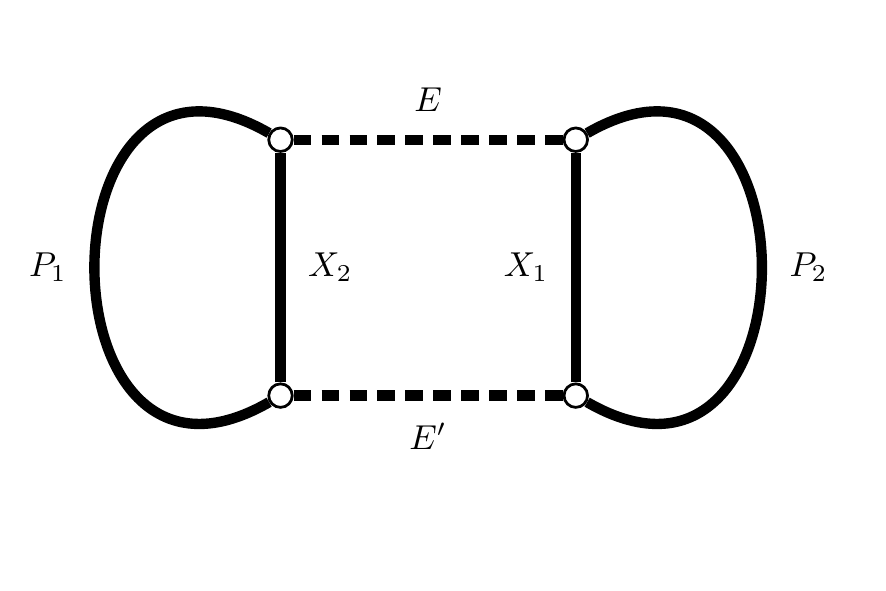}}
\caption{Relations in Graphical Form}\label{fig:Relations}
\end{figure}
\begin{itemize}
\item A walk $P$ is called a \emph{partial string} if its first edge is a loop and its last edge is solid;
\item Suppose that $P_1, Q_1$ are partial strings as in the configuration of figure \ref{fig:XRelations}.  We will often abbreviate by $Q_1P_1$ the alternating string obtained by joining $Q_1$ and $P_1$ by the edge $E$.
\item Suppose that $P_1, X_1$ are alternating walks as in figure \ref{fig:HRelations}.  Then we write $X_1P_1$ for the alternating band obtained by joining $P_1$ and $X_1$ along the edges $E$ and $E'$.  
\item Let $\sim_X$ be the minimal semigroup equivalence containing the relations:
\begin{itemize}
\item[i.] $Q_1P_1 + Q_2P_2 \sim Q_2P_1 + Q_1P_2$ for every collection  $P_1, P_2, Q_1, Q_2$ of partial strings in an $X$-configuration (figure \ref{fig:XRelations}) on $G(\underline{f})$;
\item[ii.] $X_1P_1 + X_2P_2 \sim X_1X_2 + P_1P_2$ for every collection of alternating walks $X_1, X_2, P_1, P_2$, none containing loops, in an $H$-configuration (figure \ref{fig:HRelations}) on $G(\underline{f})$.
\end{itemize}
\end{itemize}
\end{definition}

\begin{remark}
Notice that for a given pair $P, Q$ of partial strings as in figure \ref{fig:XRelations} (or a pair of alternating walks $X_1, P_1$ as in figure \ref{fig:HRelations}), $QP$ (resp. $X_1P_1$) may not be irreducible even while $Q, P$ (resp. $X_1, P_1$) contain no sub-bands.
\end{remark}

\begin{proposition}
The equivalence relations $\sim_W$ and $\sim_X$ coincide.
\end{proposition}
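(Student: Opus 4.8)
The plan is to prove the two inclusions $\sim_X \subseteq \sim_W$ and $\sim_W \subseteq \sim_X$ separately. The first inclusion is the easy direction: it suffices to check that each of the two defining relations of $\sim_X$ is compatible with the function $u$, i.e., that $u(Q_1P_1) + u(Q_2P_2) = u(Q_2P_1) + u(Q_1P_2)$ for partial strings in an $X$-configuration, and $u(X_1P_1) + u(X_2P_2) = u(X_1X_2) + u(P_1P_2)$ for walks in an $H$-configuration. Both are immediate once one observes that, by definition of $u$, the multiset of labels (equivalently, the vector $u(w)$) of a concatenated walk is the sum of the label-multisets of the pieces together with the label of the joining edge(s); since dotted edges carry label $1$ and contribute nothing, joining along a dotted edge $E$ (the $X$ case) adds no label, and joining along the pair $E, E'$ (the $H$ case) adds nothing either. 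Hence in the $X$-configuration both sides equal $u(P_1) + u(P_2) + u(Q_1) + u(Q_2)$ (plus the loop labels), and similarly in the $H$-configuration; since $\sim_X$ is generated by these relations and $u$ is additive, $A \sim_X B$ implies $u(A) = u(B)$, i.e., $A \sim_W B$.

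The substantive direction is $\sim_W \subseteq \sim_X$: if $u(A) = u(B)$ for $A, B$ in the free semigroup $W(\underline f)$ on irreducible strings and bands, then $A \sim_X B$. I would argue by induction on the total length (say, total number of edges) of $A$, using the cancellation property of $\sim_W$ noted in the Remark. If $A$ and $B$ share a common irreducible walk $w$ as a summand, write $A = w + A'$, $B = w + B'$; then $u(A') = u(B')$ by cancellation, so $A' \sim_X B'$ by induction, hence $A \sim_X B$. So assume $A$ and $B$ have no common summand. The key is then a \emph{normal form} argument: I will show that any element $A \in W(\underline f)$ can be transformed, via the moves in $\sim_X$, into a canonical representative determined solely by $u(A)$. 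The idea is that $u(A) \in U$ records, for each solid edge label $x_j$, the number of times $x_j$ is used; the moves of type (i) ($X$-configuration) and (ii) ($H$-configuration) let one "re-route" walks through shared dotted edges so as to split and recombine the solid edges in any combinatorially allowed way. Concretely, I would show: (a) using $H$-moves one can remove all "long" bands in favor of "short" ones (bands built from a single $H$-configuration), reducing to a collection of irreducible strings and bands each of whose $f_i$-values are $\le 2$ — this is exactly the bound from Lemma \ref{lem:degreeboundforwalks}; (b) once all walks are "short", the $X$-moves act transitively on the set of ways to pair up the partial strings emanating from each dotted edge, so the multiset of partial strings (which is manifestly determined by $u(A)$, since a partial string from a loop through a fixed sequence of solid edges is encoded by which labels it uses) determines the $\sim_X$-class.

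The main obstacle I anticipate is step (a)/(b): making precise that the $X$- and $H$-moves suffice to connect any two factorizations of a given $u \in U$ into irreducible walks. The cleanest way is probably to define, for each $u \in U$, a distinguished "sorted" decomposition — e.g., repeatedly peel off the lexicographically-least irreducible string or band as in the proof of Lemma preceding Definition of irreducibility (the greedy walk construction $t_1 \subset t_2 \subset \cdots$) — and then show that any other decomposition reaches this one by a sequence of $X$- and $H$-moves. The delicate point is that two irreducible walks may \emph{share a solid edge} (a label used by both), not just a dotted edge, and one must check that such a shared solid edge is always "resolvable": either the two walks meet at the corresponding pair of vertices in an $H$-configuration (allowing move (ii)) or, after an $X$-move at a nearby dotted edge, they can be brought into such a configuration. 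Handling the loops (strings) versus the $v_0 = v_n$ closure condition (bands) requires treating the two cases in parallel, much as Case 1 and Case 2 were handled in the earlier lemma; I would mirror that structure. Once the normal-form claim is established, the theorem follows: $u(A) = u(B)$ forces $A$ and $B$ to have the same normal form, so $A \sim_X B$.
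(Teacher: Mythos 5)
Your easy direction ($\sim_X \subseteq \sim_W$) is correct and matches the paper. The hard direction, however, is left as a plan rather than a proof, and the plan has a genuine gap at exactly its crux. You explicitly flag it yourself: ``making precise that the $X$- and $H$-moves suffice to connect any two factorizations of a given $u \in U$ into irreducible walks'' and ``the delicate point is that two irreducible walks may share a solid edge.'' That \emph{is} the theorem; acknowledging the difficulty without resolving it means the inclusion $\sim_W \subseteq \sim_X$ has not been established. Your proposed steps (a) and (b) also have issues: step (a) purports to use $H$-moves to replace ``long'' bands by ``short'' ones, but $W(\underline f)$ is already the free semigroup on \emph{irreducible} walks (the degree bound of Lemma~\ref{lem:degreeboundforwalks} already applies to every generator), so there is nothing to remove; and step (b), that $X$-moves act transitively on pairings of partial strings once everything is ``short,'' is simply asserted.

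The paper takes a different and more direct route that avoids constructing a normal form altogether. It isolates a single ``untwisting'' lemma: writing $A = w_0 + A'$ with $w_0$ a walk not appearing in $B$, one shows $B \sim_X w_0 + B'$ for some $B'$, after which cancellation and induction finish the argument. The lemma itself is proven by picking a walk $w_{i_1}$ in $B$ sharing a longest common partial string (or, in the band case, subpath) with $w_0$, and then showing one of three things must happen: the common piece cannot be extended inside $w_{i_1}$ itself (this possibility is ruled out by maximality and irreducibility), or an $X$-move at the next dotted edge produces a walk with a strictly longer common piece, or two walks can be joined into a (reducible) one that again has a longer common piece. Iterating strictly increases the length of the common piece, so $w_0$ itself eventually appears. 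Your worry about shared solid edges is exactly what this case analysis handles. To repair your proposal you would need to supply an argument of comparable granularity; the normal-form framing does not by itself buy you anything, and the transitivity claim in step (b) would require essentially the same case analysis to prove.
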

\begin{proof}
Notice that if two elements are equivalent under $\sim_X$, then they are equivalent under $\sim_W$, as can be seen on the relations that generate the semigroup.  

The converse is proven by induction.  Suppose that $A\sim_W B$ for some $A, B\in W(\underline{f})$.  We will show that $A\sim_X B$.  Notice that the function $u: W(\underline{f}) \rightarrow U(\underline{f})$ induces a partial order on $W(\underline{f})$ via $A' \preceq A$ if and only if $u(A')\leq u(A)$.  Notice that for any $A$, the set $\{0\preceq A' \preceq A\}$ is finite, so we can induct on $u(A)$.

 For $u(A)=0$, the proposition is clear: $u(A)=0$ implies $u(B)=0$, so $A=B=0$, which are trivially equivalent under $\sim_X$.  Now suppose that the implication holds for all $A'\prec A$.  We can assume, without loss of generality, that $a_0 \neq 0$ while $b_0=0$, since otherwise cancellation would allow us to express the equivalence under $\sim_W$ for $A'\prec A$, which, by induction, would imply equivalence under $\sim_X$.  We state the following lemma and delay the proof in order to show that the proposition follows from it.
 
 \begin{lemma}\label{lem:untwisting} With all of the above assumptions, $B\sim_X w_0 + B'$ for some $B'\in W(\underline{f})$.  \end{lemma}
 Assuming that the claim holds, then by the first paragraph of the proof, $B\sim_W w_0 + B'$.  By transitivity, then $A= w_0 + A' \sim_W w_0 + B'$.  But $\sim_W$ is a semigroup congruence, so the aforementioned equivalence holds if and only if $A'\sim_W B'$.  By inductive hypothesis, then, $A' \sim_X B'$.  Therefore, $A=w_0 + A' \sim_X w_0 + B' \sim_X B$ as desired.  
\begin{proof}[proof of lemma \ref{lem:untwisting}]
For two strings $w, w'$, choose a longest partial string common to both $w, w'$, and denote it by $(w||w')$.  (This may not be unique, but we simply choose one such for each pair of strings.) Let $l(w||w')$ be the length of this partial string (notice that $l(w||w')$ is odd since the first and last edges are solid and the walk is alternating).

\begin{itemize}
\item[Case 1:] Suppose that $w_0$ is a string.  Let $j$ be an index such that $u(w_0)_j > 0$ and $x_j$ is a loop.  Since $u(A)_j >0$ and $A\sim_W B$, we must have that $u(B)_j >0$, so there exists a string $w_{i_1}$ such that $u(w_{i_1})_j >0$, $b_{i_1}\neq 0$, and such that $l(w_0||w_{i_1})$ is maximal.  We show the following: if $w_{i_1} \neq w_0$, then $B\sim_X \Phi(B)$ in such a way that there is a walk $w_{i_2}$ appearing with non-zero coefficient in $\Phi(B)$ such that $l(w_0||w_{i_2})>l(w_0||w_{i_1})$.  Since the length of $w_0$ if finite, there must be an $N>0$ such that $w_0$ appears with non-zero coefficient in $\Phi^N(B)$.  Since equivalence under $\sim_X$ implies equivalence under $\sim_W$, then, we have that $A\sim_W \Phi^N(B)$, so $A\sim_W w_0+B'$ for some $B'$, as desired.

Let $v$ be the last vertex in $(w_0||w_{i_1})$, $E$ the dotted edge containing said vertex, $v'$ the other vertex contained in $E$, and $Q$ the partial string such that $Q(w_0||w_{i_1}) = w_{i_1}$.  This is demonstrated in the diagram below, where the walk $w_0$ is depicted in black, and $w_{i_1}$ is in gray:
\begin{align*}
\includegraphics{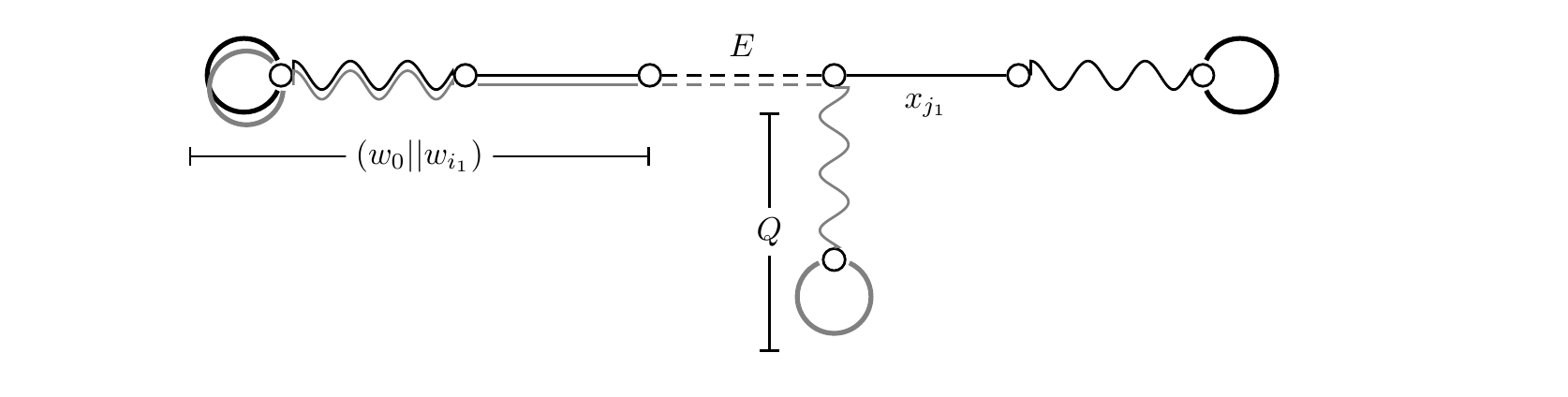}
\end{align*}
Now $x_{j_1}$ appears in $w_0$, so $u(B)_{j_1}=u(A)_{j_1}>0$, implying that there is a walk $w_{l_1}$ with non-zero coefficient appearing in $B$ with $u(w_{l_1})\neq 0$.  There are three cases:
\begin{itemize}
\item[(A)] $w_{j_1}$ is the (unique) walk appearing in $B$ with this property, then $x_{j_1}$ is an edge in $Q$;
\item[(B)] $w_{l_1}$ is not $w_{j_1}$, and is an alternating string;
\item[(C)] $w_{l_1}$ is an alternating band.
\end{itemize}
\begin{itemize}
\item[Case A:] This case impossible, for suppose that $w_{j_1}$ indeed contains $x_{j_1}$.  Said edge cannot be the first solid edge in $Q$, or else $x_{j_1}E(w_0||w_{i_1})$ would be a partial string common to both $w_0$ and $w_{i_1}$ with length one greater than $(w_0||w_{i_1})$, contradicting the definition.  Otherwise, $w_{i_1}$ takes one of the following two forms:
\begin{align*}
w_{i_1}&=\dotsc Ex_{j_1} \dotsc E (w_0||w_{i_1}) \\
w_{i_1}&=\dotsc x_{j_1} E C E (w_0||w_{i_1}),
\end{align*}
where $C$ is an alternating walk starting with the vertex $v'$ and ending with $v$.  In the former case, the walk $w_{i_1}$ could be written in the form $\dotsc E\dotsc x_{j_1}E(w_0||w_{i_1})$.  But $x_{j_1}E(w_0||w_{i_1})$ has greater length than $(w_0||w_{i_1})$.  Contradiction.  Finally, in the latter case, $w_{i_1}$ is not an irreducible walk since $EC$ is a band, so $w_{i_1} = \dotsc x_{j_1} E (w_0||w_{i_1})+ EC$, and the first summand is an alternating string with $l(w_0||\dotsc x_{j_1} E(w_0||w_{i_1}))>l(w_0||w_{i_1})$, contradicting the choice of $w_{i_1}$.  
\item[Case B:] Now we have $w_{l_1}$ an alternating string containing the edge $x_{j_1}$.  Let $Q'$ be the partial string in $w_{l_1}$ containing $x_{j_1}$ and not $E$, and $P'$ the partial string such that $Q'P' = w_{l_1}$.  This is depicted in the diagram below:
\begin{align*}
\includegraphics{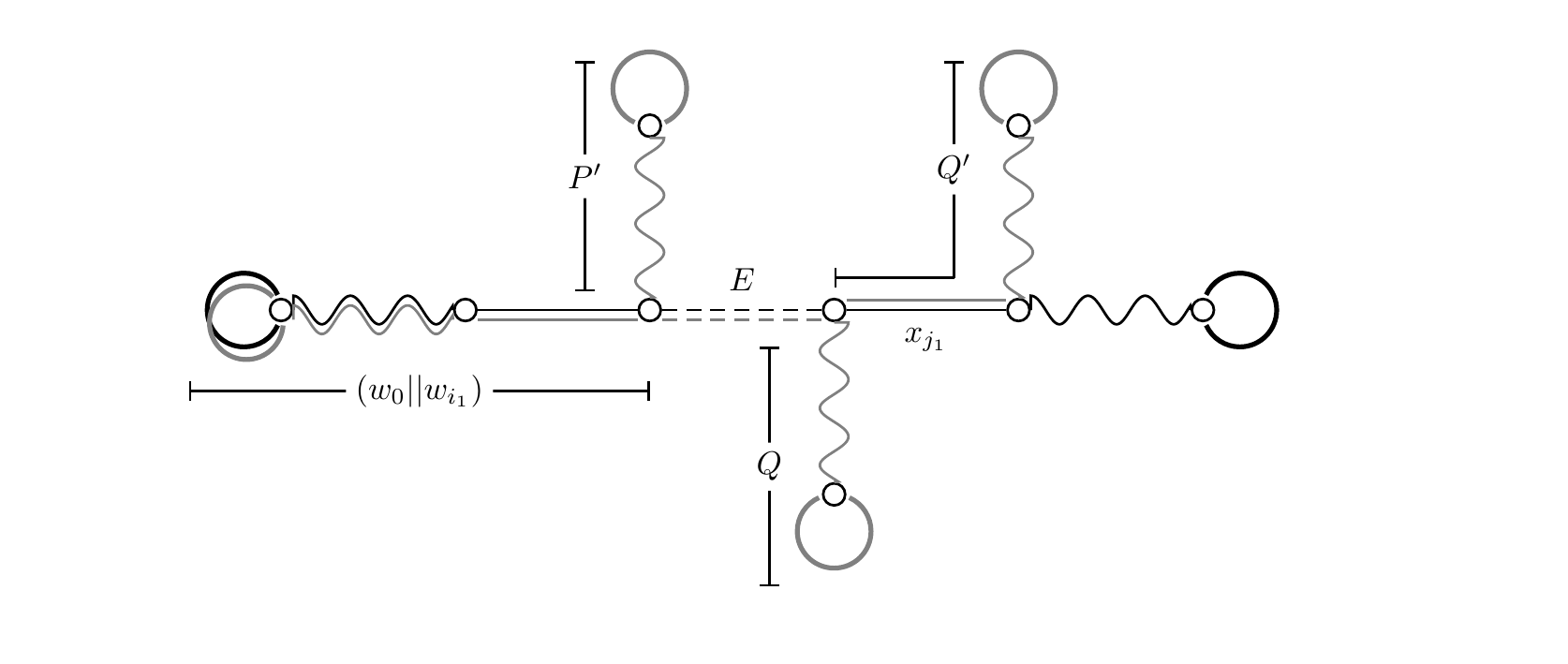}
\end{align*}
I.e., $Q'P' + Q(w_0||w_{i_1})$ appears in $B$.  Notice that this is an $X$-configuration about $E$, so $Q'P'+Q(w_0||w_{i_1}) \sim_X Q'(w_0||w_{i_1}) + QP'$.  Take $\Phi(B)= B-(Q'P'+Q(w_0||w_{i_1}))+(Q'(w_0||w_{i_1}) + QP')$.  Then $\Phi(B)\sim_X B$ and $\Phi(B)$ contains a summand, namely $Q'(w_0||w_{i_1})$, with $l(w_0|| (Q'(w_0||w_{i_1}))>l(w_0||w_{i_1})$ as claimed.
\item[Case C:] Finally, if $w_{l_1}=PEx_{j_1}$ is a band, then we are in the following situation:
\begin{align*}
\includegraphics{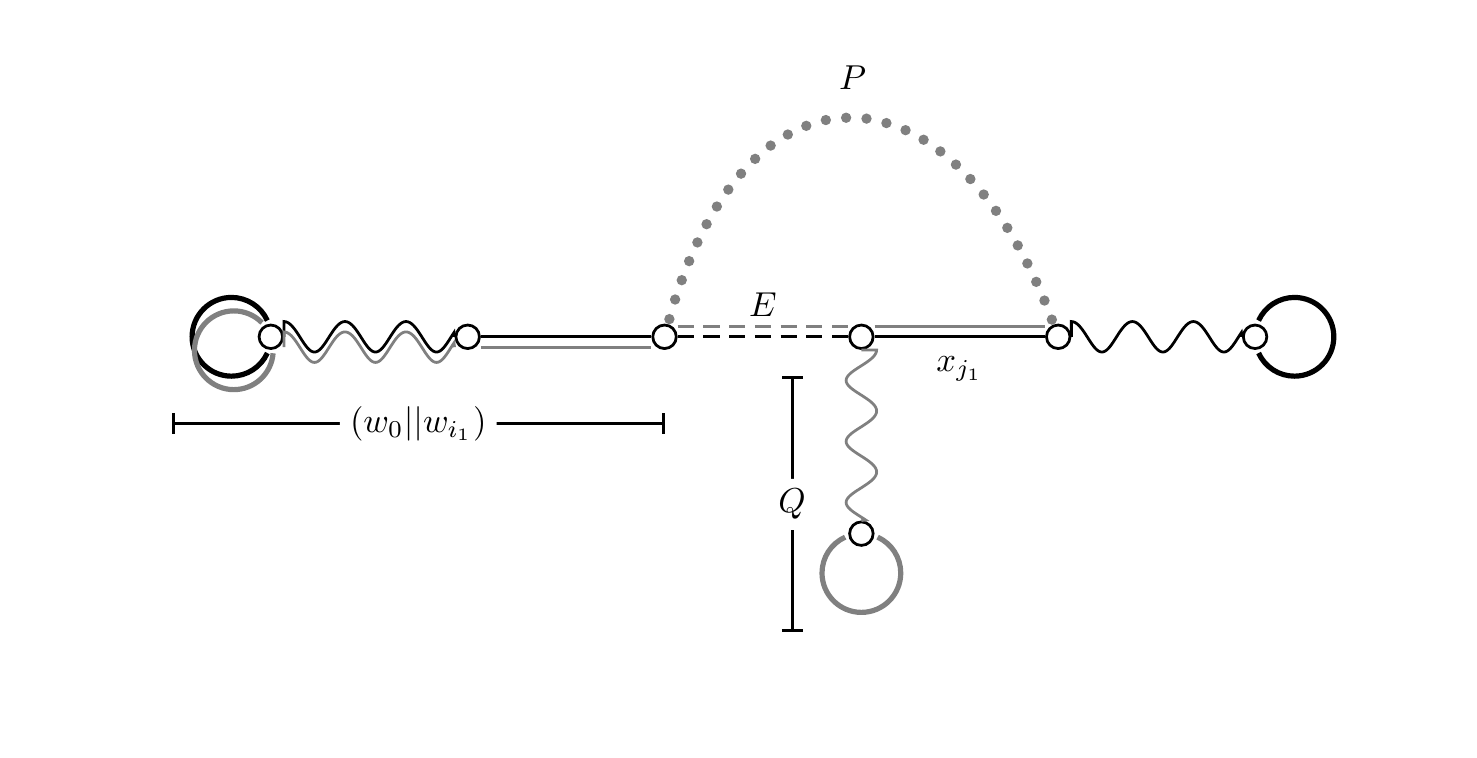}
\end{align*}
In this case, we can define $w_{i_2} = QEPx_{j_1}E(w_0||w_{i_1})$ (caution: this walk is not irreducible).  Then $l(w_0||w_{i_2})>l(w_0||w_{i_1})$, as desired.
\end{itemize}
\item[\underline{Case 2}:] Now suppose that $w_0$ is an alternating band.  Notice that we can assume (by symmetry) that there are no strings appearing as summands in $B$.  Again, for some band $w$ we will denote by $(w_0||w)$ any of the longest alternating paths contained in both $w_0$ and $w$. Let $y_1$ be some solid edge contained in $w_0$.  Since $u(w_0)_{y_1}\neq 0$, there must be a band $w_{i_1}$ appearing in $B$ passing through this edge.  This is depicted below, again the black edges form the band $w_0$ and the gray edges are from $w_{i_1}$.  
\begin{align*}
\includegraphics{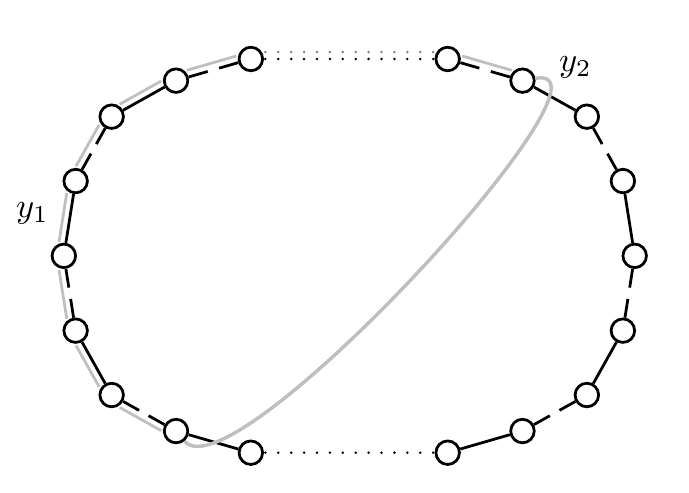}
\end{align*}
Fix an orientation on $w_0$, and suppose that $y_2$ is the first edge in $w_0$ (in the chosen orientation) which is not contained in $w_{i_1}$ as in the diagram.  But $u(B)_{y_2}\neq 0$, so there must be a band $w_{l_1}$ containing this edge.  By the same reasoning as the proof of case A for strings, if this band were $w_{i_1}$ (i.e., if $w_{i_1}$ contained $y_2$), then $w_{i_1}$ could be rewritten so as to contain a longer common subpath with $w_0$.  Therefore, this path is distinct from $w_{i_1}$.  There are two cases: 
\begin{itemize}
\item[Case A:] $w_{l_1}$ contains all other edges in $w_0$ as in the diagram including that labeled $y_2$:
\begin{align*}
\includegraphics{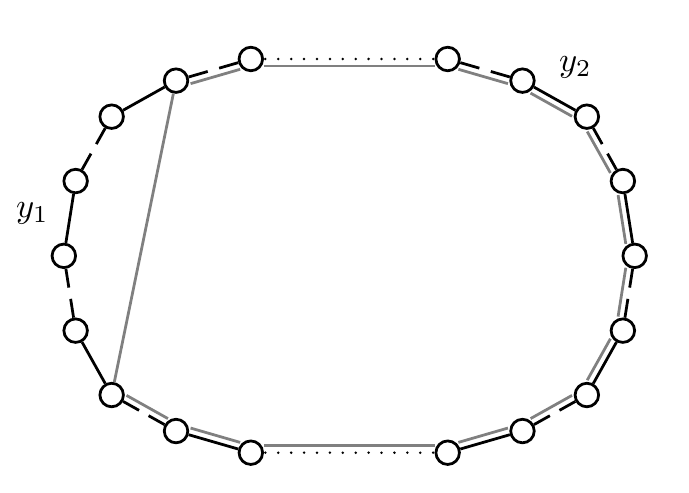}
\end{align*}
then $w_{i_1}$ and $w_{l_1}$ are in an $H$-configuration.  
\begin{align*}
\includegraphics{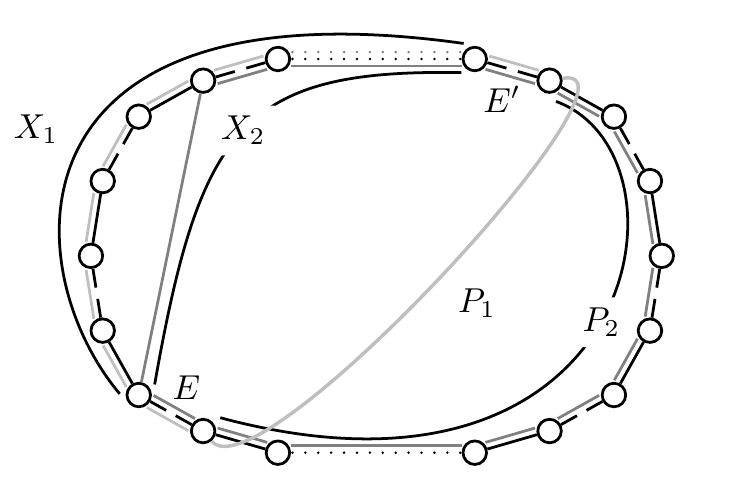}
\end{align*}
since $w_{i_1}=EX_1E'P_1$, and $w_{i_2}=EX_2E'P_2$.  Therefore 
\begin{align*}
w_{i_1}+w_{i_2}&=EX_1E'P_1 + EX_2E'P_2\\
	&= EX_1E'P_2 + EX_2E'P_1\\
	&= w_0 + EX_2E'P_1.\end{align*}
As such, $B \sim_X w_0 + B'$ with $u(B')=u(B)-u(w_0)<u(B)$.
\item[Case B:] $w_{l_1}$ does not contain all other edges in $w_0$:
\begin{align*}
\includegraphics{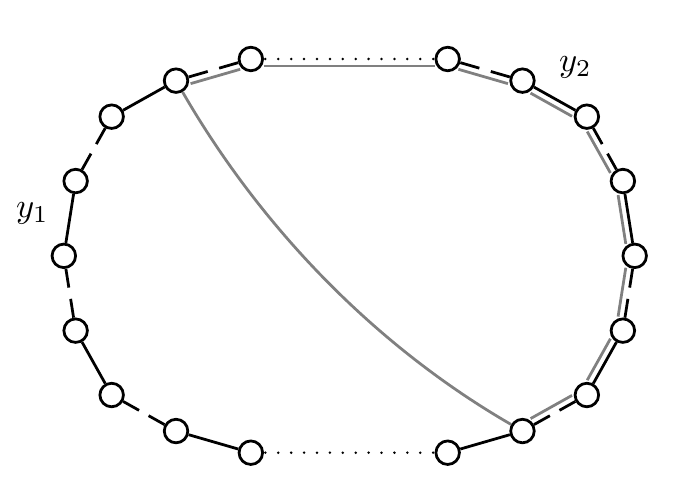} & \includegraphics{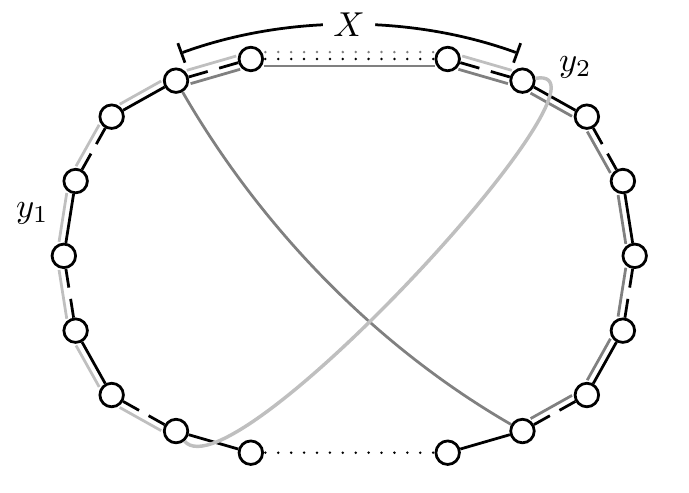}
\end{align*}
Let $X$ be the subpath common to both $w_{i_1}$ and $w_{i_2}$ as above, $P_1$ and $P_2$ the paths such that $w_{i_1}=P_1X$ and $w_{i_2}=P_2X$, respectively.  Then $w_{i_1}+w_{i_2}=XP_1XP_2$ is an alternating band (although clearly not irreducible).  Furthermore, $l(w_0||XP_1XP_2)>l(w_0||w_{i_1})$.  Since the length of $w_0$ is finite, iteration of this will introduce an $H$-configuration as in case A within $l(w_0)$ steps.  
\end{itemize}

\end{itemize}
\end{proof}

\end{proof}

\begin{example}
We conclude this section by describing the ring $\C[U(\underline{f})]$ for the semigroup in $\N^{10}$ given by the set of points $(a_1,\dotsc, a_5, b_1,\dotsc, b_5)$ satisfying the following equations:
\begin{align*}
a_1+a_2&=a_4+a_5\\
b_1+b_2&=b_4+b_5\\
a_2+a_3&=b_2+b_3\\
a_3+a_4&=b_3+b_4.
\end{align*}
The associated matching graph is
\begin{align*}
\xymatrix{
1\ar@[|(3)]@{-}@(l,u)^{a_1} \ar@{-}@[|(3)][dr]_{a_2} \ar@{--}@[|(3)][rrr] & & & 5\ar@[|(3)]@{-}@(r,u)_{a_5}\\
& 2\ar@{-}[r]^{a_3}\ar@[|(3)]@{--}[d] & 3\ar @{-}[ur]_{a_4} \ar@[|(3)]@{--}[d] & \\
& 7\ar@{-}[r]_{b_3}  & \ar@{-}[dr]^{b_4}8& \\
4\ar@{-}[ur]^{b_2} \ar@[|(4)]@{-}@(l,d)_{b_1} \ar@{--}@[|(3)][rrr] &&&9 \ar@[|(4)]@{-}@(r,d)^{b_5}}
\end{align*}
The alternating strings and bands are given below, with all but the labels of the solid edges suppressed:
\begin{align*}
X_1&:=a_1a_5 & X_2&:=b_1b_5\\
Y_1&:=a_1a_4b_4b_1& Y_2&:=a_5a_2b_2b_5\\
Z_1&:=a_2b_3a_4 & Z_2&:=b_2a_3b_4\\
B_1&:=a_2b_2b_4a_4 & B_2&:=a_3b_3.
\end{align*}
It can be easily verified that this is the complete list of the alternating walks.  The walks labeled by $X$ and $Y$ are strings, while those labeled by $Z$ and $B$ are bands.  The relation
\[
Y_1+Y_2 = B_2 + X_1+X_2
\]
can be viewed as an X-configuration about the solid edge containing the vertices $1$ and $5$, with $P_1=a_1$, $p_2=b_5b_2a_2$, $Q_1=a_5$ and $Q_2=b_1b_4a_4$.  The relation
\[
Z_1+Z_2=B_1+B_2
\]
can be seen as an H-configuration about the edges containing $2, 7$ and $3,8$.  It can be shown that these are the only relations, so 
\[ 
\C[U(\underline{f})] = \C[X_1, X_2, Y_1, Y_2, Z_1, Z_2, B_1, B_2]/(Y_1\cdot Y_2 -B_2\cdot X_1\cdot X_2, Z_1\cdot Z_2 - B_1\cdot B_2).\]

\end{example}

\section{Degree Bounds}\label{sec:corollary}
It is a simple consequence of section \ref{sec:TheCoordinateRing} that for $\lambda \in \Lambda_{SI}(Q, c, \beta, r)$, the function $m_\lambda$ is of degree
\[
	\sum\limits_{a\in Q_1} \lvert \lambda(a) \rvert
	\]
under the usual grading on the polynomial ring.  We will use this and the map $(u, y) \mapsto \lambda_{u, y}$ to give degree bounds on the generators and relations for $\SI_{Q, c}(\beta, r)$.  Recall that there is a second grading on $\SI_{Q, c}(\beta, r)$, as in proposition \ref{prop:KGrading}, given by the connected components of the partition equivalence graph.  The first corollary relates to this grading.
\begin{corollary}
The generators for $\SI_{Q, C}(\beta, r)$ occur in multi-degrees bounded by $\varphi_{u_\lambda}(e)\leq 2$ and $y_\lambda(e)\leq 1$.  
\end{corollary}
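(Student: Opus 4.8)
The plan is to reduce everything to the matching-semigroup bound of Theorem \ref{thm:degreesofgenerators} via the isomorphism $\SI_{Q,c}(\beta,r)\cong\C[U(Q,c,\beta,r)][y_b]_{b\in\{\text{bands in }\Sigma\}}$, equivalently the semigroup isomorphism $(u,y)\mapsto\lambda_{u,y}$ from $U\times Y$ onto $\Lambda_{SI}$. First I would recall that $\SI_{Q,c}(\beta,r)\cong\C[\Lambda_{SI}]$ carries its multigrading by connected components of the PEG (Proposition \ref{prop:KGrading}), and that the multidegree of $m_\lambda$ is recorded precisely by the numbers $f_\lambda(\alpha)$ on component representatives; for a string component this value equals $\varphi_{u_\lambda}(e)$ at an endpoint $e$ of that string, and for a band component it equals the constant value $y_\lambda(b)$. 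So it suffices to bound these quantities on the minimal generators of the affine semigroup $\Lambda_{SI}$, i.e., on the $\lambda$ for which $m_\lambda$ is an algebra generator of $\SI_{Q,c}(\beta,r)$.

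Next I would identify those minimal generators. Since $U$ and $Y$ are both monoids with identity the zero function, and $Y=\N^{\{\text{bands in }\Sigma\}}$ is free, the Hilbert basis of $\Lambda_{SI}\cong U\times Y$ consists of the pairs $(g,0)$ with $g$ a minimal generator of $U$, together with the pairs $(0,e_b)$, where $e_b$ is the indicator of a band $b$ (these correspond to the polynomial variables $y_b$). For $\lambda=\lambda_{0,e_b}$ one has $u_\lambda=0$, hence $\varphi_{u_\lambda}(e)=0\le 2$ for all $e\in E$, while $y_\lambda=e_b$ takes values in $\{0,1\}$, so $y_\lambda\le 1$; this family is immediate. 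For $\lambda=\lambda_{g,0}$ one has $y_\lambda=0\le 1$, so the only thing left is to bound $\varphi_{u_\lambda}(e)=\varphi_g(e)$, using that $u_{\lambda_{u,y}}=u$ and $y_{\lambda_{u,y}}=y$.

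The key step is then to observe that $U=U(Q,c,\beta,r)$ is a matching semigroup in the sense of Definition \ref{def:matchingsemigroup}: this is exactly what Proposition \ref{prop:generalUproperties} provides, realizing $U$ as the subsemigroup of $\N^{Q_1}$ cut out by the equations $\varphi_u(e)=\varphi_u(\Theta(e))$, $e\in E$, with each $\varphi_u(e)=\sum_{a\in Q_1}c_a^e u_a$ having coefficients in $\{0,1\}$ and each $u_a$ appearing in at most two of the functions $\varphi_u(\cdot)$. One should check, or note that it follows from acyclicity of $Q$ together with $e\neq\Theta(e)$, that the equation $\varphi_u(e)=\varphi_u(\Theta(e))$ is reduced (condition (b) of Definition \ref{def:matchingsemigroup}); if a common summand did occur it could be cancelled first. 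Applying Theorem \ref{thm:degreesofgenerators} to this matching semigroup yields $\varphi_g(e)\le 2$ for every endpoint $e\in E$ and every minimal generator $g$ of $U$, hence $\varphi_{u_\lambda}(e)\le 2$ for $\lambda=\lambda_{g,0}$, completing the argument. I expect the main obstacle to be the bookkeeping verifying that the hypotheses of Definition \ref{def:matchingsemigroup} are genuinely met by $U(Q,c,\beta,r)$ — in particular the reducedness of the defining equations and the correct identification of the Hilbert basis of the product $U\times Y$ with the algebra generators of $\C[\Lambda_{SI}]$; once these are in place, the bound is a direct invocation of Theorem \ref{thm:degreesofgenerators}.
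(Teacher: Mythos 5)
Your proof is essentially the argument the paper intends (the paper leaves this corollary without an explicit proof, letting it follow from the immediately preceding isomorphisms): you pass through $\SI_{Q,c}(\beta,r)\cong\C[U][y_b]$, handle the free polynomial variables $y_b$ trivially, and invoke Theorem~\ref{thm:degreesofgenerators} via Proposition~\ref{prop:generalUproperties} for the $U$-factor. The one place you rightly flag a loose end --- that the defining equations of $U(Q,c,\beta,r)$ are reduced, i.e.\ condition (b) of Definition~\ref{def:matchingsemigroup} --- is genuinely not addressed in Proposition~\ref{prop:generalUproperties}, which verifies only (a) and (c); your cancellation workaround does close the gap, though to get the stated bound $\varphi_{u_\lambda}(e)\le 2$ for the \emph{original} companion functions one should add that after cancelling a common summand $u_a$ it vanishes from both functions entirely (by condition (c) it appeared nowhere else), so that $u_a$ becomes a free coordinate contributing only the indicator generator, on which $\varphi_{u_\lambda}(e)=1$. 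With that small addendum the argument is complete.
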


As for degree bounds in the polynomial ring, we have the following:

\begin{corollary}
The generators for $\SI_{Q, c}(\beta, r)$ occur in total degrees bounded by $$2 \sum\limits_{a\in Q_0} \binom{r(a)+1}{2}.$$
\end{corollary}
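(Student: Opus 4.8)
The plan is to reduce everything to three ingredients already in place: the identification $\SI_{Q,c}(\beta,r)\cong \C[U(Q,c,\beta,r)][y_b]_{b}$, the degree formula $\deg m_\lambda=\sum_{a\in Q_1}\lvert\lambda(a)\rvert$ recalled at the start of this section, and the bound of Theorem~\ref{thm:degreesofgenerators}. First I would observe that, $\SI_{Q,c}(\beta,r)$ being a semigroup ring $\C[\Lambda_{SI}]$ with $m$ the multiplicative isomorphism $\lambda\mapsto m_\lambda$, its $\C$-algebra generators are exactly the $m_\lambda$ with $\lambda$ ranging over a generating set of the monoid $\Lambda_{SI}$. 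Under the semigroup isomorphism $(u,y)\mapsto\lambda_{u,y}$ of $U\times Y$ onto $\Lambda_{SI}$, and since $Y$ is the free commutative monoid on the set of bands of $\Sigma$, the generators of $\Lambda_{SI}$ fall into two families: the $\lambda_{u,0}$ with $u$ a generator of the semigroup $U$, and the $\lambda_{0,\varepsilon_b}$ with $b$ a band and $\varepsilon_b\in Y$ the indicator of $b$. It therefore suffices to bound $\deg m_{\lambda_{u,y}}=\sum_{a\in Q_1}\lvert\lambda_{u,y}(a)\rvert$ on these two families.

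The second step is to estimate $\lvert\lambda_{u,y}(a)\rvert$ for a single arrow $a\in Q_1$, say of color $s$ with $ta=x$. By the definition of $\lambda_{u,y}$ one has $\lambda_{u,y}(a)_{r(a)}=u(a)$, and for $1\le i<r(a)$ the consecutive difference $\lambda_{u,y}(a)_i-\lambda_{u,y}(a)_{i+1}$ equals $f_{\lambda_{u,y}}(\alpha_i^{(x,s)})$. By Proposition~\ref{PEG:proposition} this value is constant along the connected component of $\PEG(Q,c,\beta,r)$ containing $\alpha_i^{(x,s)}$; hence it equals $\varphi_u(e)$ when that component is a string with endpoint $e$ (and $0$ when the string terminates at a lonely vertex), and it equals $y(b)$ when $\alpha_i^{(x,s)}$ lies on a band $b$. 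Consequently, when $u$ is a generator of $U$ and $y=0$, every consecutive difference of $\lambda_{u,0}(a)$ is at most $\max_e\varphi_u(e)\le 2$ by Theorem~\ref{thm:degreesofgenerators}; and when $u=0$ and $y=\varepsilon_b$, every such difference is at most $\max_{b'}y(b')\le 1$.

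The third step is a telescoping sum. In the first family, $\lambda_{u,0}(a)_i\le u(a)+2\,(r(a)-i)\le 2\,(r(a)-i+1)$ for $1\le i\le r(a)$, so
\[
\lvert\lambda_{u,0}(a)\rvert\ \le\ \sum_{i=1}^{r(a)}2\,(r(a)-i+1)\ =\ 2\sum_{k=1}^{r(a)}k\ =\ 2\binom{r(a)+1}{2}.
\]
Summing over $a\in Q_1$ and invoking the degree formula gives $\deg m_{\lambda_{u,0}}\le 2\sum_{a\in Q_1}\binom{r(a)+1}{2}$. In the second family the same telescoping gives $\lvert\lambda_{0,\varepsilon_b}(a)\rvert\le\binom{r(a)}{2}\le 2\binom{r(a)+1}{2}$, so the bound holds there \emph{a fortiori}. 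Since these two families exhaust the generators, this establishes the claim.

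The main obstacle is really just the bookkeeping in the second step: one must be careful that the internal (non-endpoint) simple roots lying along a color are correctly matched with the consecutive differences of $\lambda(a)$, that those lying on bands contribute $0$ when $y=0$, and that those lying on strings inherit the endpoint value $\varphi_u(e)$ to which the bound of Theorem~\ref{thm:degreesofgenerators} applies. Once that dictionary is pinned down, the estimate is an immediate telescoping computation, and the case of the $y_b$-generators is strictly easier than the case of the generators coming from $U$.
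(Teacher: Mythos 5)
Your proof is correct and takes essentially the same route as the paper's: both reduce to the inequalities $\lambda(a)_{r(a)}\le 2$ and $\lambda(a)_i\le\lambda(a)_{i+1}+2$ for $\lambda$ corresponding to a generator, then telescope to obtain $\lvert\lambda(a)\rvert\le 2\binom{r(a)+1}{2}$ and sum over arrows. The paper states those inequalities outright, drawing them from the multidegree bounds $\varphi_{u_\lambda}(e)\le 2$ and $y_\lambda\le 1$ recorded just before, while you unpack exactly how the consecutive differences of $\lambda(a)$ correspond to endpoint values of $\varphi_u$ or band values of $y$ via the $\PEG$ dictionary --- a more explicit version of the same argument, not a different one.
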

\begin{proof}
Since $\lambda(a)_{r(a)}\leq 2$, and $\lambda(a)_{i+1}\leq\lambda(a)_i\leq \lambda(a)_{i+1}+2$, we have 
\begin{align*}
\operatorname{deg}(m_\lambda) = \sum\limits_{a\in Q_1} \lvert \lambda(a) \rvert &\leq \sum\limits_{a\in Q_1} \sum\limits_{i=1}^{r(a)} 2i \\ 
	&= 2\sum\limits_{a\in Q_1} \binom{r(a)+1}{2}.
	\end{align*}
	\end{proof}
\begin{corollary}
The relations for $\SI_{Q, c}(\beta, r)$ occur in total degrees bounded by $$8\sum\limits_{a\in Q_1} \binom{r(a)+1}{2}.$$
\end{corollary}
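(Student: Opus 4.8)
The plan is to reduce the statement to a bound on the degrees of the defining relations of the semigroup ring $\C[U]$, and then run the estimate that produced the generator bound with the constant $2$ replaced by $8$. Since $\SI_{Q,c}(\beta,r)\cong\C[U(Q,c,\beta,r)][y_b]_b$ is a polynomial ring over $\C[U]$, the $y_b$ are free polynomial generators and contribute no relations, so a minimal set of relations for $\SI_{Q,c}(\beta,r)$ is a minimal set of relations for $\C[U]$, graded via $\deg m_{\lambda_{u,0}}=\sum_{a\in Q_1}\lvert\lambda_{u,0}(a)\rvert$ on the generators. Writing $\C[U]=\C[W(\underline f)]/I_W$ and invoking the proposition that $\sim_W$ and $\sim_X$ coincide, $I_W$ is generated by the $X$-configuration binomials $t_{Q_1P_1}t_{Q_2P_2}-t_{Q_2P_1}t_{Q_1P_2}$ and the $H$-configuration binomials $t_{X_1P_1}t_{X_2P_2}-t_{X_1X_2}t_{P_1P_2}$; moreover, reading off the proof of that proposition, the configurations needed may be taken so that each partial string $Q_i,P_j$ (resp. loop-free arc $X_i,P_j$) appearing in them is a subwalk of an irreducible string or band, hence contains no sub-band.

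Next I would check that each defining binomial is homogeneous for this grading, so that its degree is well defined. Expanding each constituent walk (such as $Q_1P_1$) into its irreducible summands and using $m_\lambda m_\mu=m_{\lambda+\mu}$, both monomials of the binomial have degree $\deg m_{\lambda_{w,0}}$, where $w\in U$ is the common value $u(Q_1P_1)+u(Q_2P_2)$ (resp. $u(X_1P_1)+u(X_2P_2)$); note $w\in U$ and $\lambda_{w,0}\in\Lambda_{SI}$ because the $u$-value of any alternating walk lies in $U$ by Lemma~\ref{prop:alternatingwalk} and $U$ is a subsemigroup. Thus everything reduces to bounding $w$, and it suffices to show $\varphi_w(e)\le 8$ for every endpoint $e$ and $w_a\le 8$ for every $a\in Q_1$.

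For the key estimate, recall from the argument of Lemma~\ref{lem:degreeboundforwalks} that an alternating walk containing no sub-band visits every vertex of the matching graph at most twice (a third visit produces an interleaved sub-band, using Lemma~\ref{lem:2cycles} to dispose of the degenerate case). A constituent walk such as $Q_1P_1$ is obtained by gluing the two sub-band-free arcs $Q_1$ and $P_1$ along a single dotted edge — or, for a band such as $X_1P_1$, along two edges, with the glued endpoints counted once — so it visits every vertex at most $2+2=4$ times. By the identity $\varphi_{u(w')}(e)=\#\{\,j:v_j\text{ is the vertex labeling }e\,\}$ from the proof of Lemma~\ref{prop:alternatingwalk}, each constituent walk $w'$ then satisfies $\varphi_{u(w')}(e)\le 4$, and a fortiori $u(w')_a\le 4$. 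Since one side of each defining binomial is the sum of at most two such constituent walks, linearity of $\varphi$ in $u$ yields $\varphi_w(e)\le 8$ and $w_a\le 8$.

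Finally, feeding this into the estimate that gave the generator bound: for $\lambda=\lambda_{w,0}$ we have $\lambda(a)_{r(a)}=w_a\le 8$, and each successive difference $\lambda(a)_i-\lambda(a)_{i+1}$ equals a value of $f_{\lambda_{w,0}}$, hence equals $\varphi_w(e)\le 8$ for the endpoint $e$ of the string containing the corresponding labeled simple root (or $0$); so $\lambda(a)_i\le 8(r(a)-i+1)$ and
\[
\deg m_{\lambda_{w,0}}=\sum_{a\in Q_1}\lvert\lambda_{w,0}(a)\rvert\le\sum_{a\in Q_1}\sum_{i=1}^{r(a)}8i=8\sum_{a\in Q_1}\binom{r(a)+1}{2}.
\]
Every defining relation of $I_W$, hence every minimal relation of $\SI_{Q,c}(\beta,r)$, has degree at most this number. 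The step I expect to require the most care is the reduction in the first paragraph: checking that the $X$- and $H$-configurations needed to generate $I_W$ can all be chosen with their partial strings and arcs being pieces of single irreducible walks, so that the "no sub-band, hence at most two visits per vertex" input is legitimately available; this is a matter of inspecting which configurations the proof of $\sim_W=\sim_X$ actually manufactures (the arcs $Q,Q',P',(w_0\|w_{i_1}),X_i,P_i$ there), after which the count above is routine.
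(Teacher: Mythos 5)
Your proof is correct and follows the same route as the paper: reduce to the $X$- and $H$-configuration binomials with sub-band-free arms, use the vertex-visit count underlying Lemma~\ref{lem:degreeboundforwalks} to get $\varphi_u(e)\le 8$ and $u_a\le 8$ on each side of a defining binomial, and then telescope exactly as in the generator bound with the constant $2$ replaced by $8$. You make explicit two steps the paper leaves implicit --- the passage from $\SI_{Q,c}(\beta,r)$ to the sub-semigroup ring $\C[U]$ and the homogeneity of the defining binomials with respect to the polynomial degree --- but these are unpacking rather than a different argument.
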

\begin{proof}
We may assume that in an $X$-relation, none of the arms contains a subband, so by theorem \ref{thm:degreesofgenerators}, we have that for each arm $u(a)\leq 2$ and $\varphi_u(e)\leq 2$ for any $e$.  Therefore, on $P_1P_2 \cdot Q_1Q_2$, the bounds become $u(a)\leq 8$ and $\varphi_u(e)\leq 8$.  The bound is derived similarly to the previous corollary.  The same technique works for $H$-relations as well, so the bound is as desired.\end{proof} 


\begin{thebibliography}{9}
\bibitem[ABCP87]{ABCP} I. Assem, T. Br\"{u}stle, G. Charbonneau-Jodoin, P-G. Plamondon.
\newblock {\it Gentle Algebras Arising from Surface Triangulations.}
\newblock Algebra \& Number Theory {\bf 4} (2010), No.2, 201-229.

\newblock 
\bibitem[AS87]{AS} I. Assem, A. Skowro\'{n}ski.
\newblock {\it Iterated Tilted Algebras of Type $\tilde{A}_n$.}
\newblock Math. Z. {\bf 195} (1987), no. 2, 269-290.

\bibitem[BR87]{BR} 
M. C. R. Butler and C. M. Ringel. 
\newblock {\it {A}uslander-{R}eiten sequences with few middle terms and applications to string algebras,}
\newblock Comm. Algebra, \textbf{15} (1987), no. 1-2, 145-179. 

\bibitem[DRS74]{DRS} 
P. Doubilet, G.C. Rota, and J. Stein. 
\newblock {\em On the foundations of combinatorial theory, IX.}
\newblock Studies in Appl. Math. \textbf{53} (1974), 185-216.

\bibitem[DS81]{DCS} 
C. DeConcini, E. Strickland.
\newblock {\it On the Variety of Complexes.}
\newblock Adv. in Math. \textbf{41} (1981), no. 1, 57-77.

\bibitem[DF73]{DF} 
P. Donovan, M. R. Freislich.
\newblock {\it The representation theory of finite graphs and associated algebras.}
\newblock Carleton University, Ottawa, Ont., 1973, Carleton Mathematical Lecture Notes, No. 5. 

\bibitem[GP68]{GP} 
I. M. Gelfand and V. A. Ponomarev
\newblock {\it Indecomposable representations of the Lorentz group.}
\newblock \emph{Russian Math. Surveys}, \textbf{23} (1968), 1 Ð58.

\bibitem[Har77]{H} R. Hartshorne.
\newblock {\em Algebraic Geometry.}
\newblock Springer-Verlag, New York, 1977, Graduate Texts in Mathematics, No. 52.

\bibitem[KW2011]{WK} W. Kra\'{s}kiewicz, J. Weyman (2011). 
\newblock {\it Generic Decompositions and Semi-Invariants for String Algebras.}
\newblock preprint (2011), available http://arxiv.org/abs/1103.5415

\bibitem[SW83]{SW} A. Skowro\'{n}ski and J. Waschb\"{u}sch.
\newblock {\it Representation-finite biserial algebras.}
\newblock J. Reine Angew. Math. \textbf{345} (1983), 172Ð181. 

\bibitem[Wey03]{W} J. Weyman. 
\newblock {\em Cohomology of Vector Bundles and Syzygies,} 
\newblock Cambridge Tracts in Mathematics, vol. 149, 
\newblock Cambridge University Press, Cambridge, 2003.

\end{thebibliography}

\end{document}